\newtheorem{theorem}{Theorem}[section]
\newcommand{\bseq}{\begin{subequations}}
\newcommand{\eseq}{\end{subequations}}
\newcommand{\beq}{\begin{equation}}
\newcommand{\eeq}{\end{equation}}
\newcommand{\bef}{\begin{figure}}
\newcommand{\eef}{\end{figure}}
\journal{Journal of Computational Physics}
\begin{document}

\begin{frontmatter}



\title{Implicit-explicit time integration method for fractional advection-reaction-diffusion equations}


\author[inst1]{D. Ghosh}

\affiliation[inst1]{organization={Department of Mathematics},
            addressline={IIIT Delhi},
            postcode={110020},
            country = {India}}

\author[inst1]{T. Chauhan}
\author[inst1]{S. Sircar \corref{cor1}}
\cortext[cor1]{Corresponding Author: sarthok@iiitd.ac.in}

\begin{abstract}
We propose a novel family of asymptotically stable, implicit-explicit, adaptive, time integration method (denoted with the $\theta$-method) for the solution of the fractional advection-diffusion-reaction (FADR) equations. This family of time integration method generalized the computationally explicit $L_1$-method adopted by Brunner (J. Comput. Phys. {\bf 229} 6613-6622 (2010)) as well as the fully implicit method proposed by Jannelli (Comm. Nonlin. Sci. Num. Sim., {\bf 105}, 106073 (2022)). The spectral analysis of the method (involving the group velocity and the phase speed) indicates a region of favorable dispersion for a limited range of Peclet number. The numerical inversion of the coefficient matrix is avoided by exploiting the sparse structure of the matrix in the iterative solver for the Poisson equation. The accuracy and the efficacy of the method is benchmarked using (a) the two-dimensional (2D) fractional diffusion equation, originally proposed by Brunner, and (b) the incompressible, subdiffusive dynamics of a planar viscoelastic channel flow of the Rouse chain melts (FADR equation with fractional time-derivative of order $\alpha=\nicefrac{1}{2}$) and the Zimm chain solution ($\alpha=\nicefrac{2}{3}$). Numerical simulations of the viscoelastic channel flow effectively capture the non-homogeneous regions of high viscosity at low fluid inertia (or the so-called `spatiotemporal macrostructures'), experimentally observed in the flow-instability transition of subdiffusive flows.
\end{abstract}

\begin{keyword}
Caputo derivative \sep IMEX time integration \sep Upwind difference scheme \sep Rouse polymer melt \sep Zimm chain solution
\end{keyword}

\end{frontmatter}


\section{Introduction} \label{sec:intro}
Fractional partial differential equations (FPDE) have emerged as a powerful tool for modeling the multiphysics and multiscale processes in numerical simulations ranging from physics~\cite{Goychuk2017,Goychuk2020,Goychuk2021}, biology~\cite{Lai2009} to quantitative finance~\cite{Coffey2004}. For example, some of the most significant and profoundly published experimental results in random flow environments, such as the cytosol and the plasma membrane of biological cells~\cite{Rubenstein2003}, crowded complex fluids and polymer solutions~\cite{Levine2001}, dense colloidal suspensions~\cite{Kremer1990} and single-file diffusion in colloidal systems~\cite{Kou2004}, are better rationalized within the fractional calculus framework. The growing number of applications of fractional derivatives in various fields of science and engineering indicate that there is a significant demand for better numerical algorithms. However, such algorithms are predominantly designed for one-dimensional (1D) problems~\cite{Murio2008}, due to the severe memory restrictions imposed by these derivatives, a challenge which we alleviate in Section~\ref{subsec:AdaptiveTime}.

The individual physics or scale components in FPDE have very different properties that are reflected in their discretization, for example, in Fractional Advection Diffusion Reaction (FADR) systems~\cite{Jannelli2022}, the discrete advection has a relatively slow (or `nonstiff') dynamics while the diffusion is a fast (or `stiff') evolving process. Implicit-Explicit (IMEX) integrators have been proposed as an attractive alternative (compared with the fully explicit or fully implicit time integration methods) where one combines the explicit (implicit) integration for the slow (fast) scale~\cite{Crouzeix1980}. In an IMEX scheme the system of equations assume the form~\cite{Ascher1995},
\beq
\frac{\partial^\alpha \Phi}{\partial t^\alpha} = {\bf f}(\Phi) + \eta {\bf g}(\Phi),
\label{eqn:IMEX}
\eeq
where the superscript, $\alpha$, denotes order of the fractional derivative and $\eta$ is a nonnegative parameter. In Equation~\eqref{eqn:IMEX}, the terms collected in ${\bf f}(\Phi)$ are on a slow time scale. Because they are (possibly) nonlinear, the implementation of a fully implicit integration scheme faces performance challenges, either from a poor performance of iterative solvers or from a complex Jacobian matrix associated with the problem. Therefore, it makes sense to solve this term explicitly. The terms in ${\bf g}(\Phi)$, however, are on a fast time scale, and their explicit solution may require excessively small time steps in order to maintain numerical stability. Being usually linear in nature, they can be solved implicitly without further complications. The stability of such methods is still bounded by the Courant-Friedrichs-Lewy (CFL) condition, but because the fast terms are treated implicitly, these conditions are less strict when compared to a fully explicit scheme of similar formal order of accuracy. A class of such an asymptotically stable IMEX method is introduced in Section~\ref{sec:method}.

We have limited our focus in this work, on the development, analysis and applicability of a novel family of spatiotemporal discretization method for the numerical solution of the 1D and 2D FADR systems. In the knowledge of the present authors, a detailed analysis of the numerical methods for FADR equation, in the finite difference framework, is absent. Such analysis is available for the fraction diffusion equation~\cite{Brunner2010} and the advection-diffusion equation~\cite{Khaled2005}. Section~\ref{sec:method} describes this method. Using the 1D linear FADR equation, the time-asymptotic stability analysis and the spectral analysis for the method is outlined in Section~\ref{sec:anal}. The numerical method is benchmarked using the test cases for the 2D fraction diffusion equation, proposed by Brunner~\cite{Brunner2010}, in Section~\ref{sec:MV}. The numerical results of the subdiffusive dynamics of the viscoelastic channel flow is delineated in section~\ref{sec:NS}. Section~\ref{sec:conclude} concludes with a brief discussion of the implication of these results in future studies.

\section{Methodology} \label{sec:method}
Let $\Gamma$ be a bounded domain in $\mathbb{R}^2$ with sufficiently smooth boundary $\partial \Gamma$. We present the numerical method for an initial-boundary value problem for the time-dependent FADR equation with fractional time-derivative of order $\alpha \in (0,\,1)$, as follows,
\begin{align}
&\frac{\partial^\alpha u({\bf x}, t)}{\partial t^\alpha} + K_1({\bf x}, t) \nabla \cdot u({\bf x}, t) = K_2({\bf x}, t) \nabla^2 u({\bf x}, t) + f({\bf x}, t), \quad {\bf x} \in \Gamma, \quad t \in (0, T), \nonumber \\
& u({\bf x}, 0) = u_0({\bf x}), \quad {\bf x} \in \Gamma, \nonumber \\
& \partial u({\bf x}, 0) = g_0({\bf x}), \quad {\bf x} \in \partial \Gamma, \quad t \in (0, T),
\label{eqn:method1}
\end{align}
where $\frac{\partial^\alpha u({\bf x}, t)}{\partial t^\alpha}$ denotes the Caputo fractional derivative of order $\alpha$~\cite{Podlubny1999} with respect to $t$ defined by
\beq
\frac{\partial^\alpha u({\bf x}, t)}{\partial t^\alpha} = \frac{1}{\Gamma(1-\alpha)} \int^t_0 \frac{d t'}{(t-t')^\alpha} \frac{\partial u({\bf x}, t)}{\partial t'}, \quad 0 < \alpha < 1,
\label{eqn:method2}
\eeq
and the operators $\nabla(\cdot)$ and $\nabla^2(\cdot)$ in equation~\eqref{eqn:method1}, are the (integer order) gradient and the Laplacian operator in $\mathbb{R}^2$. The FADR equation is related with the non-Markovian continuous-time random walk and is a model for anomalous diffusional flow-fields such as polymer melts~\cite{Chauhan2021}, flows of liquid crystals~\cite{Sircar2010} as well as biological flows including mucus~\cite{Sircar2016} and cartilage~\cite{Sircar2015}. In the next three sections, we propose the numerical method for the spatiotemporal discretization of equation~\eqref{eqn:method1}.

\subsection{Time integration} \label{subsec:Time}
The Caputo fractional time derivative in equations~(\ref{eqn:method1}, \ref{eqn:method2}) is discretized using the difference approximation, discussed in~\cite{Podlubny1999}. Suppose the time interval [0, $T$] is discretized uniformly into $n$ subintervals, define $t_k = k \Delta t, \,\, k = 0, 1,\ldots,n$, where $\Delta t  = T/n$ is the time-step. Let $u({\bf x},\,t_k)$ be the exact value of a function $u({\bf x}, t)$ at time step $t_k$. Then, the fractional derivative can be approximated as follows,
\begin{align}
\frac{\partial^\alpha u({\bf x}, t)}{\partial t^\alpha} &\approx \frac{1}{\Gamma(1-\alpha)} \sum^k_{j=0} \frac{u({\bf x},\,t_{j+1})-u({\bf x},\,t_j)}{\Delta t} \int^{(j+1)\Delta t}_{j\Delta t} \frac{d t'}{(t_{k+1}-t')^\alpha} \nonumber \\
&=\frac{1}{\Gamma(1-\alpha)} \sum^k_{j=0} \frac{u({\bf x},\,t_{j+1})-u({\bf x},\,t_j)}{\Delta t} \int^{(k-j)\Delta t}_{(k-j+1)\Delta t} t'^{-\alpha} d t' \nonumber \\
&=\frac{1}{\Gamma(1-\alpha)} \sum^k_{j=0} \frac{u({\bf x},\,t_{k+1-j})-u({\bf x},\,t_{k-j})}{\Delta t} \int^{(j+1)\Delta t}_{j\Delta t} t'^{-\alpha} d t' \nonumber \\
&=\frac{(\Delta t)^{1-\alpha}}{\Gamma(2-\alpha)} \sum^k_{j=0} \frac{u({\bf x},\,t_{k+1-j})-u({\bf x},\,t_{k-j})}{\Delta t} r^\alpha_j,
\end{align}
where the weight, $r^\alpha_j=\left[ (j+1)^{1-\alpha} - j^{1-\alpha} \right]$. Following an earlier work by the authors which utilized a standard integer-order advection-reaction-diffusion (ADR) equations~\cite{Sircar2020}, we retain `IMEX method philosophy' by explicitly discretizing the advection and the reaction term (i.~e., `$K_1\nabla \cdot u({\bf x}, t)$' term and `$f({\bf x}, t)$' term, respectively in equation~\eqref{eqn:method1}), while the diffusive term (i.~e., `$K_2({\bf x}, t) \nabla^2 u({\bf x}, t)$' term in equation~\eqref{eqn:method1}) is treated semi-implicitly, as follows,
\beq
K_2({\bf x}, t) \nabla^2 u({\bf x}, t) \approx \theta K_2({\bf x}, t) \nabla^2 u({\bf x}, t)|_{t_k} + (1-\theta) K_2({\bf x}, t) \nabla^2 u({\bf x}, t)|_{t_{k-1}}.
\label{eqn:method3}
\eeq
This family of time integration method (referred to as the $\theta-$method in subsequent discussion), generalizes the computationally explicit $L1$-method by Brunner~\cite{Brunner2010} as well as the fully implicit method recently proposed by Jannelli~\cite{Jannelli2022}.

\subsection{Adaptive time stepping} \label{subsec:AdaptiveTime}
When the simulation time is long, the size of `memory' in the fractional derivative approximation becomes enormously large. However, according to the `fading memory property'~\cite{Diethelm2006}, for long times, the solution of the FADR systems change more slowly than the standard integer-order ADR processes. Hence, it makes sense to employ a large time step at longer times. Let $\widetilde{u(\cdot,\, t_k)}$ be the numerical approximation for $u(\cdot,\, t_k)$. To detect this change, we define a measure between the numerical solutions of two consecutive time steps by,
\beq
\Delta u_{t_k} = \frac{\| \widetilde{u(\cdot,\, t_k)} - \widetilde{u(\cdot,\, t_{k-1})} \|_{l^2}}{|| \widetilde{u(\cdot,\, t_{k-1}) ||_{l^2}}}, \quad k = 1, \ldots, n.
\eeq
For some user-defined relaxation parameter $\delta$, if $\Delta u_{t_k} < \delta$, then the time spacing is geometrically increased (i.~e., $\Delta t \rightarrow 2\Delta t$), upto some prefixed value $\Delta t_{\text{max}}$.
\subsection{Spatial approximation} \label{subsec:Space}
The gradients and Laplacian in equation~\eqref{eqn:method1} are spatially approximated using the upwind difference scheme (UDS) and the second order central difference scheme (CDS), respectively. Recall that the CDS approximation of the convective terms in equation~\eqref{eqn:method1} does not model the flow-physics accurately~\cite{Sircar2020}. Furthermore, a first order upwind scheme is too diffusive, thereby necessitating the use of higher order upwind schemes. For example,
\beq
K_1({\bf x}, t) \frac{\partial u}{\partial x} \approx {K_1}_{ij} \left( \frac{u^k_{i+1,j}-u^k_{i-1,j}}{2\Delta x} \right) + q(K^+_1 u^-_x + K^-_1 u^+_x),
\eeq
where
\begin{align}
K^-_1 = \text{min}({K_1}_{ij}, 0) \quad &K^+_1 = \text{max}({K_1}_{ij}, 0), \nonumber\\
u^-_x = \frac{u^k_{i-2,j} - 3u^k_{i-1,j} + 3u^k_{i,j} - u^k_{i+1,j}}{3 \Delta x} \quad & u^+_x = \frac{u^k_{i-1,j} - 3u^k_{i,j} + 3u^k_{i+1,j} - u^k_{i+2,j}}{3 \Delta x},
\label{eqn:UD3}
\end{align}
where the parameter $q=0.5$ represents the third-order accurate upwind formula (UD3) and which is utilized for the interior stencil points. The use of ghost points are avoided by setting $q = 0$ for grid points immediately adjacent to the boundary, leading to a second order accurate method (UD2) at these points. Since the focus of the present work is on the development of the time integration method, we retain same the spatial approximation, outlined above, in all the subsequent test cases. 

\section{Asymptotic stability and spectral analysis: linear 1D FADR equation} \label{sec:anal}
The linear 1D FADR equation~\eqref{eqn:method1} (with $K_1=c, K_2=\gamma, f(x, t) = \lambda u(x, t)$ for $x \in (-\infty, \infty)$, where $c, \gamma$ and $\lambda$ are constants specifying the advection speed, coefficient of diffusion and coefficient of reaction, respectively) serves as a model for FPDE replicating multi-scale processes. 

\subsection{Asymptotic stability analysis} \label{subsec: ASA}
First, we show that the solution of the linear 1D FADR equation, coupled with periodic boundary conditions and discretized using the numerical method outlined in Section~\ref{subsec:Time}-\ref{subsec:Space}, is asymptotically stable for a limited range of CFL and Peclet numbers. The discretization of the linear 1D FADR equation, using the $\theta$-method, is as follows,
\begin{align}
&\frac{(\Delta t)^{-\alpha}}{\Gamma{(2-\alpha)}}\left\{ \sum_{j=1}^n \left(j^{(1-\alpha)}-(j-1)^{(1-\alpha)}\right) \left(u_i^{n-j+1}-u_i^{n-j}\right)\right\} + c \left(\frac{u_{i+1}^{n-1}-u_{i-1}^{n-1}}{2\Delta x}\right) + \nonumber\\
& qc \left(\frac{u_{i-2}^{n-1}-3u_{i-1}^{n-1}+3u_{i}^{n-1}-u_{i+1}^{n-1}}{3\Delta x}\right)
\nonumber \\
& = \gamma \theta \frac{u_{i+1}^{n}-2u_{i}^{n}+u_{i-1}^{n}}{(\Delta x)^2} + \gamma(1-\theta)\frac{u_{i+1}^{n-1}-2u_{i}^{n-1} + u_{i-1}^{n-1}}{(\Delta x)^2} + \lambda u_{i}^{n-1},
\label{eqn:step2}
 \end{align}
where, $n / i$, denote the temporal / spatial index and we set the parameter, $q=0.5$. Introducing the following non-dimensional parameters: the CFL number, $N_c$, Peclet number, $Pe$ and Damk\"{o}hler number, $Da$, where
\beq
N_c = \frac{c (\Delta t)^\alpha}{\Delta x}, \quad Pe = \frac{\gamma (\Delta t)^\alpha}{(\Delta x)^2}, \quad Da = \frac{\lambda (\Delta x)}{c},
\eeq
and rearranging the terms in equation~\eqref{eqn:step2}, we arrive at the following equation,
\begin{align}
&\left(1\!+\! 2 Pe \theta \Gamma{(2\!-\! \alpha)}\right) u_i^n  \! -\!   Pe \theta \Gamma{(2\!-\!\alpha)}u_{i+1}^n \! -\!
Pe\theta\Gamma{(2\!-\! \alpha)}u_{i-1}^n   =   \left(1-qN_c\Gamma{(2\!-\! \alpha)}
\right. \nonumber\\
& \left. -2 Pe(1\!-\!\theta)\Gamma{(2\!-\!\alpha)} + Da N_c\Gamma{(2-\alpha)}\right)u_i^{n-1}  +  \bigg(-N_c\frac{\Gamma{(2-\alpha)}}{2}+q N_c\frac{\Gamma{(2-\alpha)}}{3} 
 \nonumber \\
& + Pe(1\!-\!\theta)\Gamma{(2\!-\!\alpha)}\bigg)u_{i+1}^{n-1} \!+\! \bigg(N_c\frac{\Gamma{(2\!-\!\alpha)}}{2} + q N_c\Gamma{(2\!-\!\alpha)} \! +\! Pe (1-\theta)\Gamma{(2-\alpha)}\!\! \bigg)
\nonumber \\
&u_{i-1}^{n-1}- q N_c\frac{\Gamma{(2-\alpha)}}{3}u_{i-2}^{n-1} + \sum_{j=2}^n \left(j^{(1-\alpha)}-(j-1)^{(1-\alpha)}\right) \left(u_i^{n-j+1}  -  u_i^{n-j}\right).
\label{eqn:step3}
\end{align}
If $\widetilde{u_i^n}$ be the (numerically) approximate solution of equation~\eqref{eqn:step3}, then the round-off error, $\varepsilon_i^n = u_i^n - \widetilde{u_i^n}\quad (i = 0, \ldots, M)$, identically satisfies the same equation. Due to periodic boundary conditions, we have,
\beq
\varepsilon_0^n=\varepsilon_M^n \quad n = 1,2,......,T.
     \label{eqn:step4}
\eeq
Assuming the round-off error has the following form,
\begin{align}
    \varepsilon_i^n = \xi_n e^{Ik(ih)},
    \label{eqn:step5}
\end{align}
where $\xi_n = |\varepsilon_i^n|, I=\sqrt{-1}, h=\Delta x$ and $k = \frac{2\pi l}{L}$ (l: index, and L: Spatial domain length). We substitute the above relation in the equation for the round-off error, to arrive at the following form,
\beq
\mu_1 \xi_n = \mu_2 \xi_{n-1} +\sum_{j=2}^n r^\alpha_j \left(\xi_{n-j+1}-\xi_{n-j}\right),
 \label{eqn:step6}
\eeq
where
\begin{align}
&\mu_1 = 1+2Pe\theta\Gamma(2-\alpha)(1-\cos(kh)), \nonumber \\
&\mu_2 = 1  -  \left(  N_cq \left(1  -  \frac{2}{3}\cos(kh)\right) + 2 Pe(1  -  \theta)(1  -  \cos(kh))  -  DaN_c  +   IN_c\sin(kh)  
\right. \nonumber \\
&\left. \qquad -  \frac{N_c q}{3} \left(2e^{-Ikh}  -  e^{-2Ikh}\right)   \right)\Gamma(2  -  \alpha), \nonumber \\
&r^\alpha_j = j^{(1-\alpha)}-(j-1)^{(1-\alpha)}.
\label{eqn:step7}
\end{align}
Next we prove the result that the finite difference scheme~(\ref{eqn:step6}-\ref{eqn:step7}) is asymptotically stable, in the form of the following theorem.
\begin{theorem}
The approximate solution to the 1D linear FADR equation using the $\theta$-method~\eqref{eqn:step6} with $\alpha \in (0,\,1)$, on the finite domain $x \in [-L,\, L]$ with periodic boundary conditions is asymptotically stable for all $t \ge 0$.
\label{thm:IMEX}
\end{theorem}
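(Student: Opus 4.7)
The plan is to establish asymptotic stability by strong induction on the time index $n$, showing that the Fourier amplitudes $\xi_n$ introduced in~\eqref{eqn:step5} satisfy $|\xi_n|\le|\xi_0|$ for every $n\ge 1$; from this, boundedness of the round-off error $|\varepsilon_i^n|$ for all $t\ge 0$ is immediate. The argument rests on three structural features of~\eqref{eqn:step6}. First, $\mu_1 = 1+2\,Pe\,\theta\,\Gamma(2-\alpha)(1-\cos kh)$ is real and satisfies $\mu_1\ge 1$. Second, the weights $r^\alpha_j = j^{1-\alpha}-(j-1)^{1-\alpha}$ obey $r^\alpha_1=1$ and are strictly positive and strictly decreasing in $j$ (since $t\mapsto t^{1-\alpha}$ is concave for $\alpha\in(0,1)$), hence satisfy the telescoping identity $\sum_{k=2}^{n-1}(r^\alpha_k-r^\alpha_{k+1}) = r^\alpha_2-r^\alpha_n$. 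Third, the history sum in~\eqref{eqn:step6} can be regrouped by discrete Abel summation into a linear combination of past amplitudes with coefficients of definite sign.

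First I would apply Abel summation to $\sum_{j=2}^{n} r^\alpha_j(\xi_{n-j+1}-\xi_{n-j})$, which reorganises~\eqref{eqn:step6} as
\begin{equation*}
\mu_1\,\xi_n \;=\; (\mu_2+r^\alpha_2)\,\xi_{n-1} \;+\; \sum_{k=2}^{n-1}(r^\alpha_{k+1}-r^\alpha_{k})\,\xi_{n-k} \;-\; r^\alpha_n\,\xi_0 .
\end{equation*}
Taking moduli, invoking the monotonicity of $r^\alpha_j$, and assuming the strong inductive hypothesis $|\xi_m|\le|\xi_0|$ for every $m<n$, the middle sum telescopes and delivers the one-step bound $\mu_1|\xi_n|\le\bigl(|\mu_2+r^\alpha_2|+r^\alpha_2\bigr)|\xi_0|$. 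The induction therefore closes provided the scalar inequality
\begin{equation*}
|\mu_2+r^\alpha_2|+r^\alpha_2 \;\le\; \mu_1
\end{equation*}
holds uniformly in $kh\in[0,\pi]$, while the base case $n=1$ reduces to the weaker condition $|\mu_2|\le\mu_1$. I would verify both inequalities by squaring, substituting the explicit form of $\mu_2$ from~\eqref{eqn:step7}, separating real and imaginary parts, and maximising the resulting trigonometric polynomial in $\cos(kh)$ and $\sin(kh)$ (with the maximum typically attained at $kh=\pi$); this pointwise optimisation produces the explicit CFL/Peclet/Damk\"ohler window anticipated in the paragraph preceding the theorem.

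The principal technical obstacle will be the modulus estimate of $\mu_2+r^\alpha_2$, which is genuinely complex-valued on account of the purely imaginary central-difference contribution $-I\,N_c\sin(kh)\,\Gamma(2-\alpha)$ and the complex-phase upwind contribution $-\tfrac{N_c q}{3}\bigl(2e^{-Ikh}-e^{-2Ikh}\bigr)\,\Gamma(2-\alpha)$. Disentangling the real and imaginary parts of $|\mu_2+r^\alpha_2|^2$ produces expressions that must be balanced against the positive quadratic form $2\,Pe\,\theta\,\Gamma(2-\alpha)(1-\cos kh)$ already present in $\mu_1$; crucially, the explicit-diffusion contribution $2\,Pe\,(1-\theta)\,\Gamma(2-\alpha)(1-\cos kh)$ hidden inside $\mu_2$ partially cancels the implicit-diffusion term in $\mu_1$, so the interplay between $\theta$ and $Pe$ is precisely what delineates the stability region---bounded for $\theta<\tfrac12$ and essentially unconditional in the pure-diffusion limit when $\theta\ge\tfrac12$, echoing the classical Crank--Nicolson behaviour for integer-order parabolic problems. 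Once this modulus bound is secured, it feeds both the base case and every inductive step simultaneously, completing the proof.
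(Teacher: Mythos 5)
Your overall strategy (strong induction on the bound $|\xi_n|\le|\xi_0|$, with the history sum regrouped by Abel summation) is the standard route for L1-type schemes, and your summation identity
$\sum_{j=2}^{n} r^\alpha_j(\xi_{n-j+1}-\xi_{n-j}) = r^\alpha_2\xi_{n-1}+\sum_{k=2}^{n-1}(r^\alpha_{k+1}-r^\alpha_k)\xi_{n-k}-r^\alpha_n\xi_0$
is algebraically correct. The gap is in the closing inequality. Because the history sum is a pure difference operator, its regrouped coefficients sum to \emph{zero}, not to one; after you take moduli they contribute $(r^\alpha_2-r^\alpha_n)+r^\alpha_n=r^\alpha_2$, and the $r^\alpha_2\xi_{n-1}$ term contributes another $r^\alpha_2$ inside $|\mu_2+r^\alpha_2|$. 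Your sufficient condition $|\mu_2+r^\alpha_2|+r^\alpha_2\le\mu_1$ therefore reads, in the long-wave limit $kh\to 0$ (where $\mu_1\to 1$ and $\mu_2\to 1+Da\,N_c\,\Gamma(2-\alpha)$), approximately $1+2r^\alpha_2\le 1$, which is false for every $\alpha\in(0,1)$ since $r^\alpha_2=2^{1-\alpha}-1>0$. The binding constraint is thus at $kh=0$, not at $kh=\pi$ as you anticipate, and since the inequality must hold for all Fourier modes, the parameter window your argument delivers is empty; in particular the claimed ``essentially unconditional'' behaviour for $\theta\ge\tfrac12$ in the pure-diffusion limit also fails at $kh\to 0$. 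The induction never closes.

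The underlying issue is that the triangle inequality discards exactly the cancellation that makes the scheme stable. The paper's proof avoids this by establishing the \emph{stronger} inductive hypothesis of monotone decay, $\xi_n\le\xi_{n-1}\le\cdots\le\xi_0$: then each term $r^\alpha_j(\xi_{n-j+1}-\xi_{n-j})$ of the history sum is non-positive (positive weight times a non-positive difference), so the entire sum can be dropped without taking moduli, and only $\mu_1\ge1$, $|\mu_2|\le1$ is needed. If you want to retain a modulus-based strong induction, the regrouping must produce coefficients on $\xi_{n-1},\dots,\xi_0$ that form (up to the $O(N_c,Pe)$ perturbation $\mu_2-1$) positive weights summing to $r^\alpha_1=1$ --- i.e., the coefficient of $\xi_{n-1}$ should be $(1-r^\alpha_2)+(\mu_2-1)$ rather than $\mu_2+r^\alpha_2$; this convex-combination structure emerges only if the history sum enters~\eqref{eqn:step6} with the opposite sign to the one printed (which is what the derivation from~\eqref{eqn:step2} actually yields), a discrepancy worth flagging but one that your argument, taking~\eqref{eqn:step6} at face value, does not repair.
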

\begin{proof}
It suffices for us to show that the time-amplification factor, $\xi_n$ (equation~\eqref{eqn:step6}), obey the inequality,
\beq
\xi_n \le \xi_{n-1} \le \xi_{n-2} \le \ldots \le \xi_1 \le \xi_0.
\label{eqn:step8}
\eeq
\begin{itemize}
\item For $n=1$ in equation~(\ref{eqn:step6},\ref{eqn:step7}), we observe the $\mu_1 \ge 1$ while $\mu_2 \le 1$ for sufficiently small $N_c$, and $Pe$, thereby leading us towards the conclusion that $\xi_1 \le \xi_0$.

\item Using induction hypothesis, we assume that $\xi_{n-1} \le \xi_{n-2} \le \ldots \le \xi_1 \le \xi_0$. Next we show that $\xi_n \le \xi_{n-1}$.

\item Since $\mu_1 \ge 1$ and $\mu_2 \le 1$, equation~\eqref{eqn:step6} can be replaced with the following inequality,
\beq
\xi_n \le \xi_{n-1} +\sum_{j=2}^n r^\alpha_j \left(\xi_{n-j+1}-\xi_{n-j}\right) \le \xi_{n-1}.
\label{eqn:step9}
\eeq
The second inequality in equation~\eqref{eqn:step9} is because,
\begin{enumerate}[i)]
\item $r^\alpha_j$ is positive and 
\item $r^\alpha_j > r^\alpha_{j+1}$, and
\item $\xi_{n-j+1}-\xi_{n-j} \le 0$ (via the induction hypothesis),
\end{enumerate}
which completes the proof.
\end{itemize}
\end{proof}
We emphasize that while the $\theta$-method is not unconditionally stable, it is time-asymptotically stable for a restricted range of $N_c$ and $Pe$.

\subsection{Spectral analysis} \label{subsec:SA}
Although the $\theta$-method is asymptotically stable, the presence of dispersion errors (through negative group velocity and large phase speed errors) would invalidate the long time integration results~\cite{Sircar2006}. Hence we couple the result in Section~\ref{subsec: ASA} along with the toolset of spectral analysis to deduce the relevant range of the parameters, $N_c, Pe$ and $Da$, for an accurate representation of the numerical solution of the 1D FADR equation.

Using the spectral (Fourier) representation of the approximate solution of equation~\eqref{eqn:step3}, we have $\widetilde{u_i^n} = \xi'_n e^{I(ikh)}\,\,\,(I=\sqrt{-1},\, k=\frac{2\pi l}{L})$. We define the numerical time-amplification factor, $G_{num} = \frac{\widetilde{u_i^n}}{\widetilde{u_i^{n-1}}} = \frac{\xi'_n}{\xi'_{n-1}}$. Dividing equation~\eqref{eqn:step3} with $\widetilde{u_i^{n-1}}$, we arrive at the equation governing $G_{num}$,
\beq
C_0 G_{num}^n + C_1 G_{num}^{n-1}+ \sum_{j=2}^n r_j^\alpha \left(G_{num}^{n-j+1}-G_{num}^{n-j}\right) = 0,
\label{eqn:step10}
\eeq
where the coefficients,
\begin{align}
C_0 = &1+2Pe\theta \Gamma{(2-\alpha)}(1-\cos(kh)) \nonumber\\
C_1 = &-1+qN_c\Gamma(2-\alpha)\left(1-\frac{4}{3}\cos(kh)+\frac{1}{3}\cos(2kh)\right)\!+\!2Pe(1-\theta)\Gamma(2-\alpha)(1-
\nonumber \\
&\cos(kh)) \!-\! Da N_c\Gamma(2\!-\! \alpha) \!+\!IN_c\Gamma(2-\alpha)\left(\!\sin(kh)+\frac{2}{3}q\sin(kh)-\frac{q}{3}\sin(2kh)\right).
\label{eqn:step11}
\end{align}
We remark that the order `n' of the polynomial~\eqref{eqn:step10} is fixed at $n=75$ in subsequent analysis, since our numerical studies have shown that an increase in the polynomial order by one, shifts the contours of the spectral variables by less that $0.001\%$.

\begin{figure}[htbp]
\includegraphics[width=0.33\linewidth, height=0.3\linewidth]{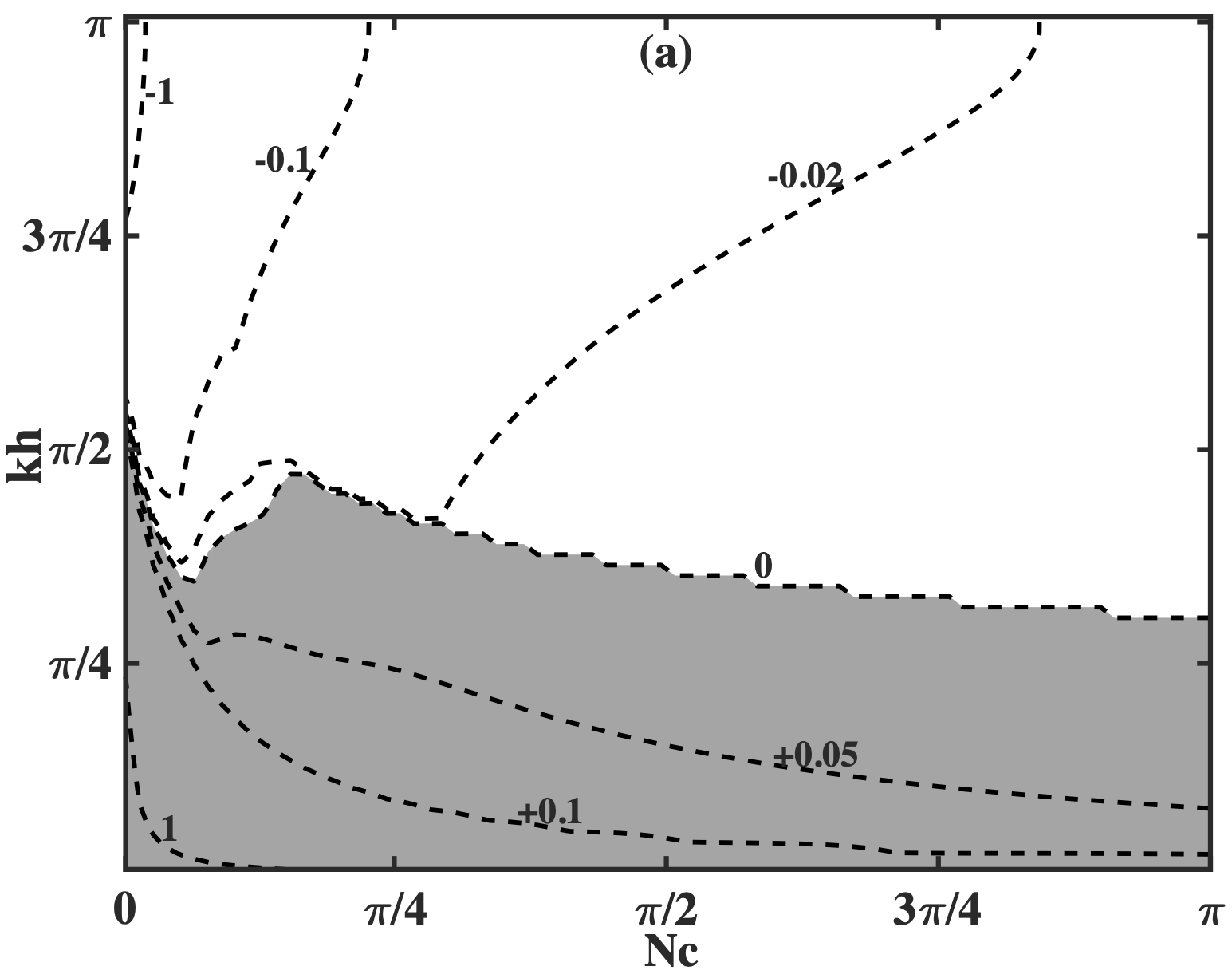}
\includegraphics[width=0.33\linewidth, height=0.3\linewidth]{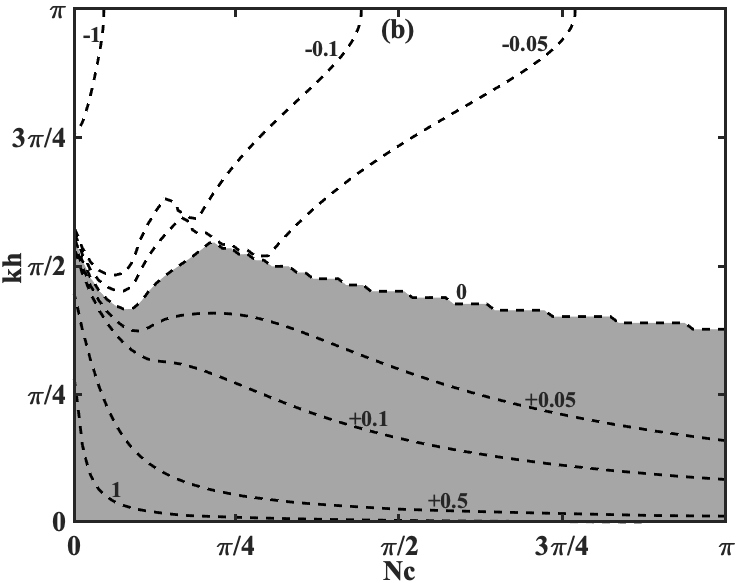}
\includegraphics[width=0.33\linewidth, height=0.3\linewidth]{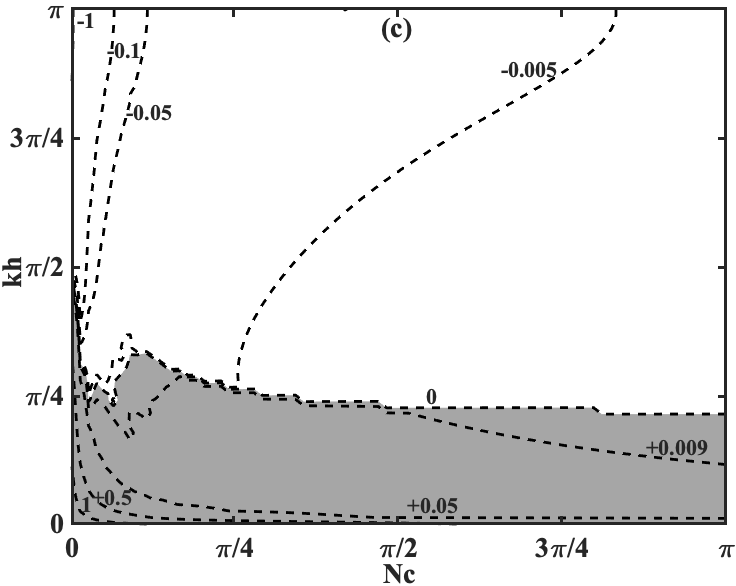}
\vskip 1pt
\includegraphics[width=0.33\linewidth, height=0.3\linewidth]{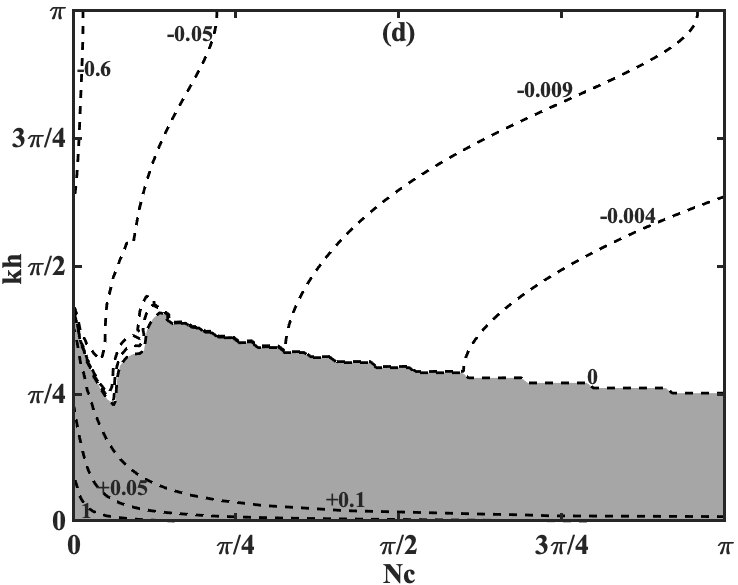}
\includegraphics[width=0.33\linewidth, height=0.3\linewidth]{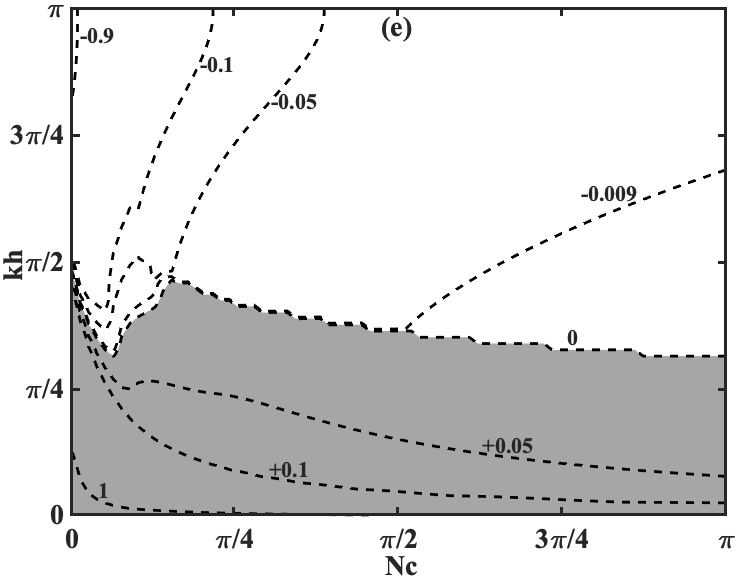}
\includegraphics[width=0.33\linewidth, height=0.3\linewidth]{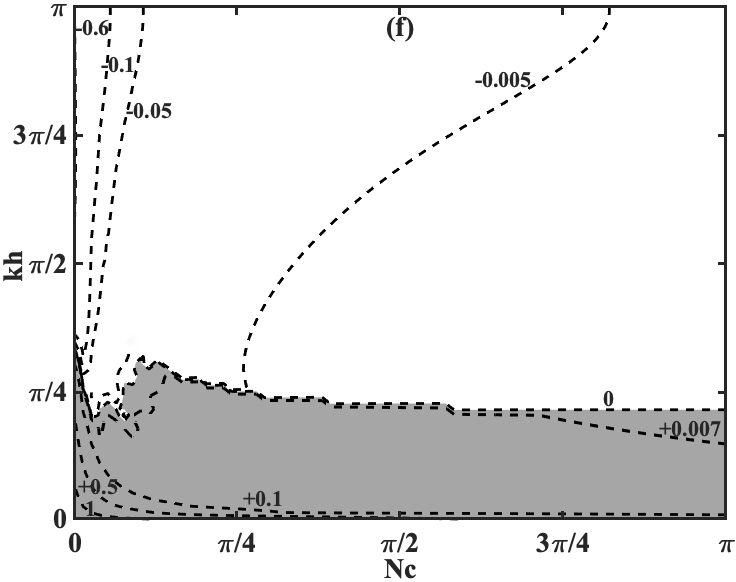}
\vskip 1pt
\includegraphics[width=0.33\linewidth, height=0.3\linewidth]{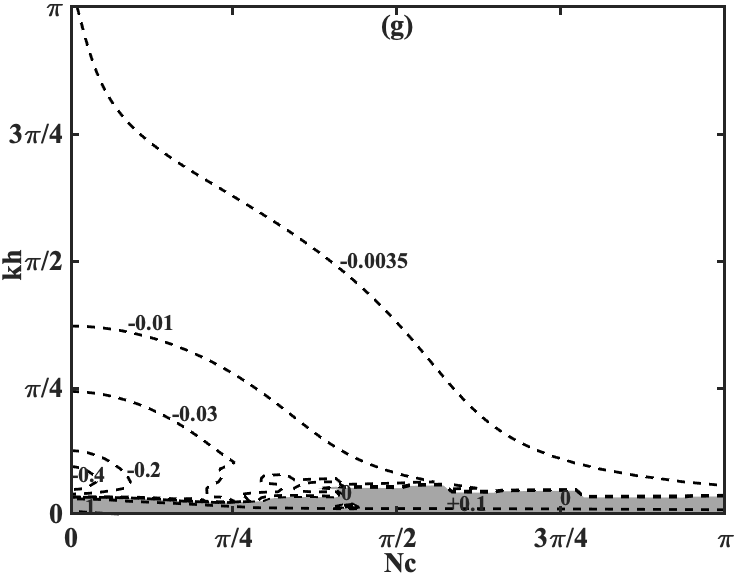}
\includegraphics[width=0.33\linewidth, height=0.3\linewidth]{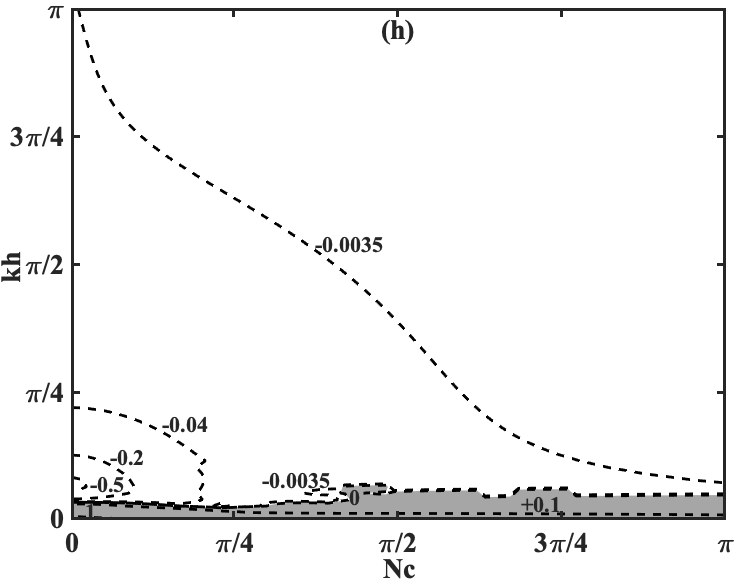}
\includegraphics[width=0.33\linewidth, height=0.3\linewidth]{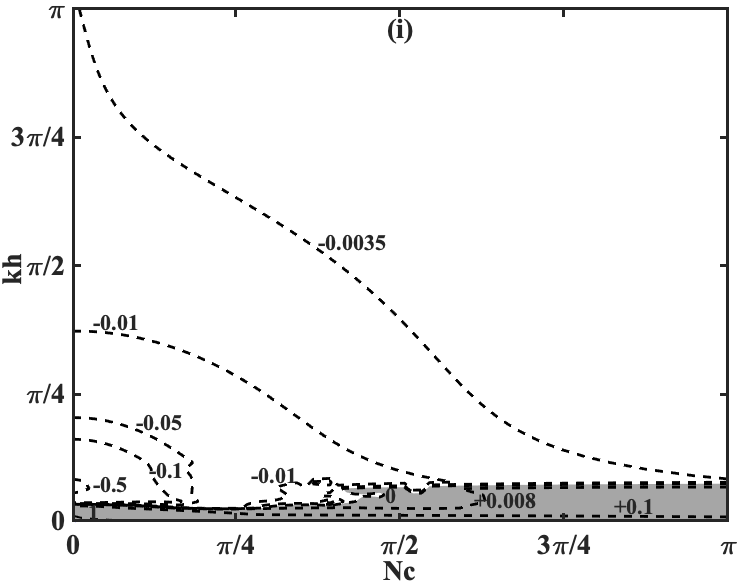}
\caption{Group velocity ratio contours, $V_g$, at $\theta=0.5$, $\alpha=0.9$ and (a) $Pe=0.001, Da=-0.01$, (b) $Pe=0.001, Da=0.0$, (c) $Pe=0.001, Da=0.01$, (d) $Pe=0.01, Da=-0.01$, (e) $Pe=0.01, Da=0.0$, (f) $Pe=0.01, Da=0.01$, (g) $Pe=1.0, Da=-0.01$, (h) $Pe=1.0, Da=0.0$ and (i) $Pe=1.0, Da=0.01$.}
\label{fig:Fig1}
\end{figure}

Next, transforming the exact solution of the linear 1D FADR equation, using Fourier-Laplace transform as $u(x, t) = \int \int \hat{U}(k, \omega) e^{{\it I}(kx - \omega t)}$, we arrive at the exact dispersion relation,
\beq
\omega_{exact} = \omega = I(\lambda-\gamma k^2-c I k)^{\frac{1}{\alpha}}.
\label{eqn:step12}
\eeq
Similarly, we have a corresponding numerical dispersion relation for the approximate equation~\eqref{eqn:step3},
\beq
\omega_{num} = I(\lambda^N-\gamma^N k^2-c^N I k)^{\frac{1}{\alpha}},
\label{eqn:step12a}
\eeq
where the superscript `N' denotes the corresponding numerical values of the parameters. Since
\beq
G_{num} = \frac{u(k, t+\Delta t)}{u(k, t)} = \frac{e^{I(kx-\omega_{num} (t+\Delta t))}}{e^{I(kx-\omega_{num} t)}} = e^{-I\omega_{num}\Delta t} = e^{I \beta},
\label{eqn:step13}
\eeq
and the numerical phase speed is given by, $c_{num} = \frac{\omega_{num}}{k}$, we find that,
\beq
c_{num} = -\frac{\beta}{k \Delta t} = -\frac{1}{k \Delta t}\tan^{-1} \left( \frac{(G_{num})_{Imag}}{(G_{num})_{Re}} \right),
\label{eqn:step14}
\eeq
where the subscript `Imag' / `Re' denote the imaginary / real values, respectively. Using the expression for the exact phase speed, $c_{exact} = \frac{\omega_{exact}}{k}$ and the non-dimensional parameters described in equation~\eqref{eqn:step3}, we find the ratio of the phase speeds,
\beq
\frac{c_{num}}{c_{exact}} = \frac{-I\beta}{(DaN_c-(kh)^2Pe-I(kh)N_c)^{\frac{1}{\alpha}}},
\label{eqn:step15}
\eeq

Finally, the expression for the numerical and the exact group velocities are given by,
\begin{align}
& {V_g}_{num} = \left[\frac{\partial}{\partial k} (\omega_{num})\right]_{Re} = \left[\frac{\partial}{\partial k} \left(\frac{\beta}{\Delta t}\right)\right]_{Re} = \left[\frac{h}{\Delta t}\frac{d\beta}{d(kh)}\right]_{Re}\nonumber\\
&{V_g}_{exact}\! =\!\! \left[\frac{\partial}{\partial k} (\omega_{exact})\right]_{Re} \!\!\! = \! \!\left[\frac{\partial}{\partial k} \left(I(\lambda\!-\!\gamma k^2 \!-\! cI k)^{\frac{1}{\alpha}}\right)\right]_{Re} \!\!\!=\! \!\left[\frac{1}{\alpha} (c\!-\!2k\gamma I)(-I\omega)^{1\!-\!\alpha}\right]_{Re}.
\label{eqn:step16}
\end{align}

Hence,
\beq
\frac{(V_g)_{num}}{(V_g)_{exact}} = \left( \frac{\alpha}{\splitfrac{N_c\left(r^{1-\alpha}(\Delta t)^{1-\alpha}\cos{(1-\alpha)\phi}\right)}{+2 kh \left(r^{1-\alpha}(\Delta t)^{1-\alpha}\sin{(1-\alpha)\phi}\right)Pe}} \right) \left(\frac{d\beta}{d(kh)}\right)\bigg|_{Re},
\label{eqn:step17}
\eeq
where $\omega = r e^{I (\phi+\frac{\pi}{2})}$.
\begin{figure}[htbp]
\includegraphics[width=0.33\linewidth, height=0.3\linewidth]{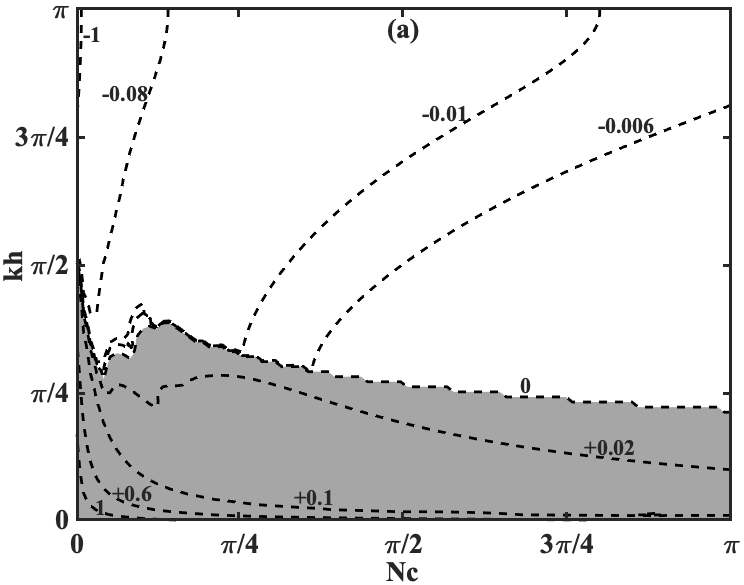}
\includegraphics[width=0.33\linewidth, height=0.3\linewidth]{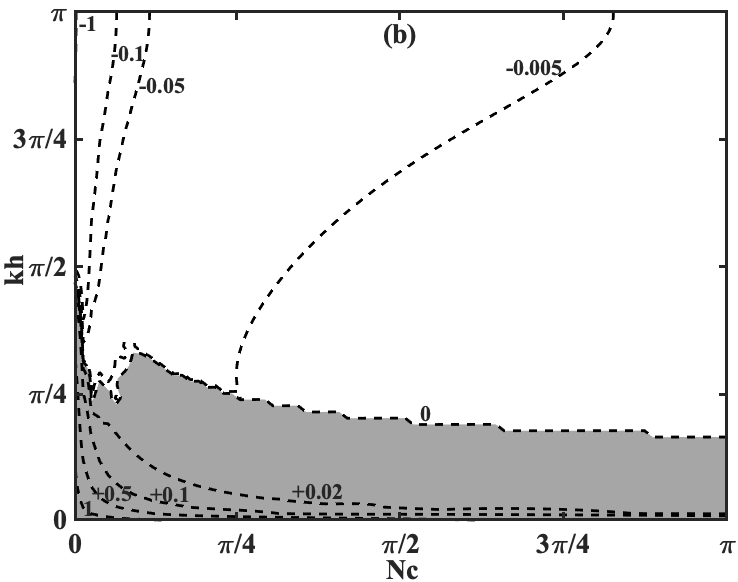}
\includegraphics[width=0.33\linewidth, height=0.3\linewidth]{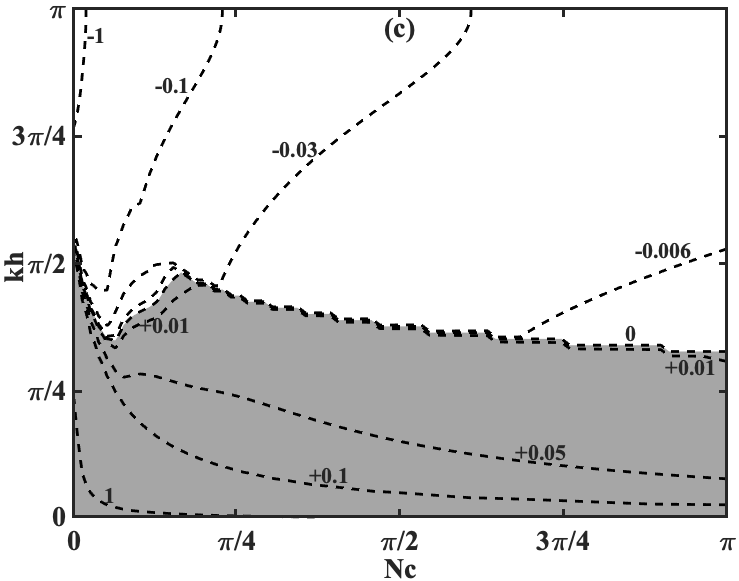}
\vskip 1pt
\includegraphics[width=0.33\linewidth, height=0.3\linewidth]{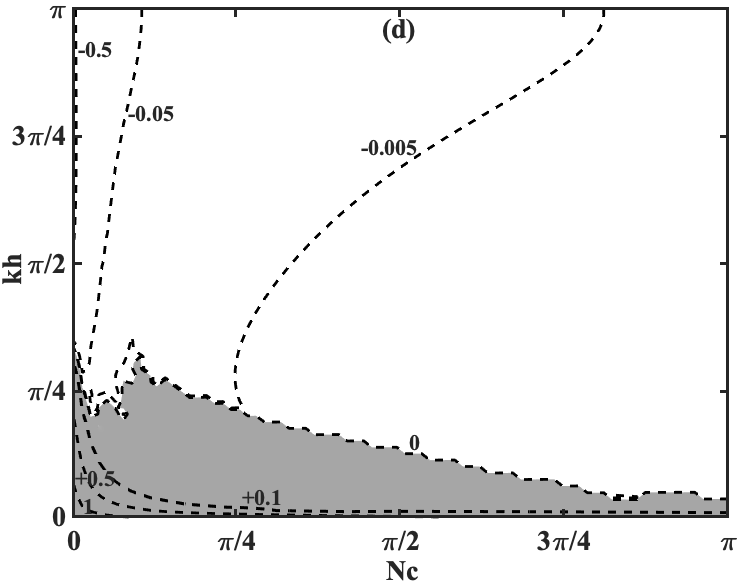}
\includegraphics[width=0.33\linewidth, height=0.3\linewidth]{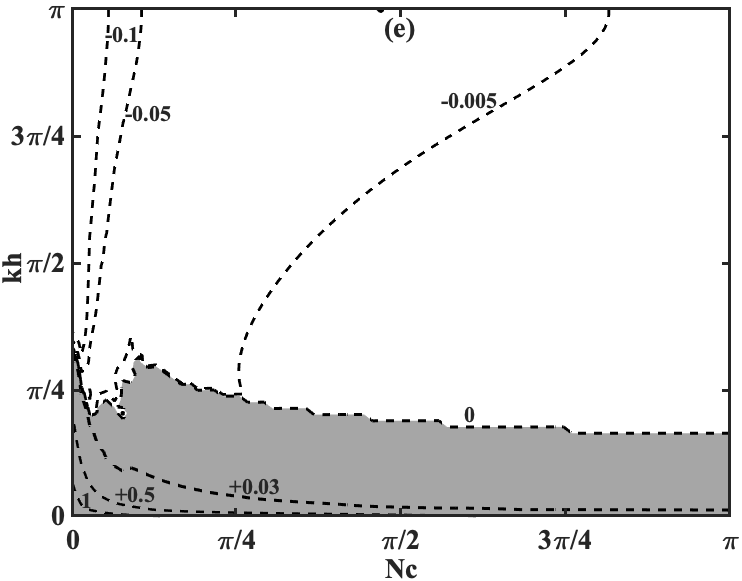}
\includegraphics[width=0.33\linewidth, height=0.3\linewidth]{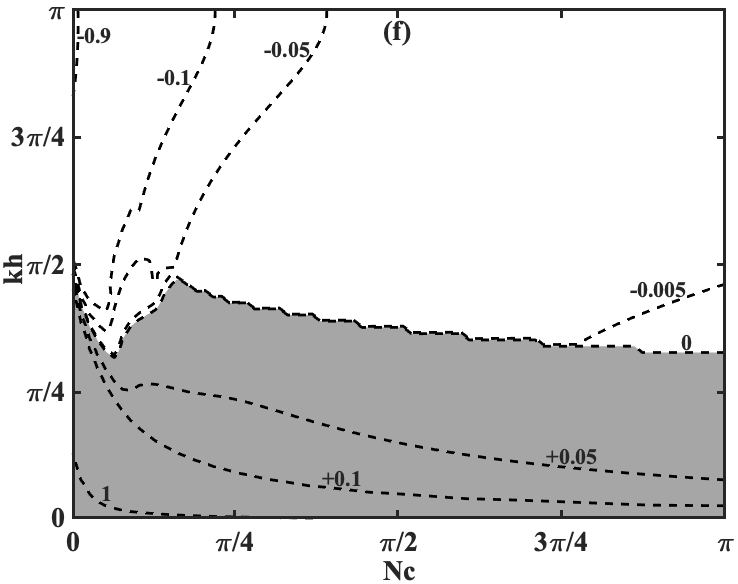}
\vskip 1pt
\includegraphics[width=0.33\linewidth, height=0.3\linewidth]{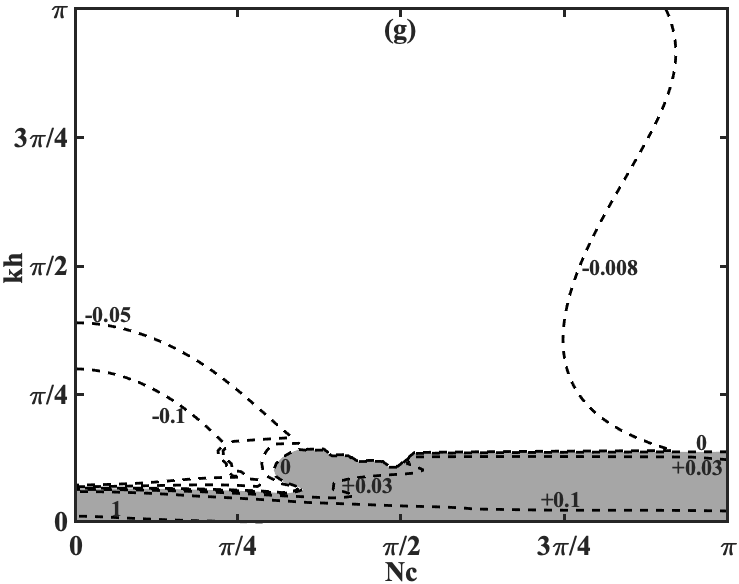}
\includegraphics[width=0.33\linewidth, height=0.3\linewidth]{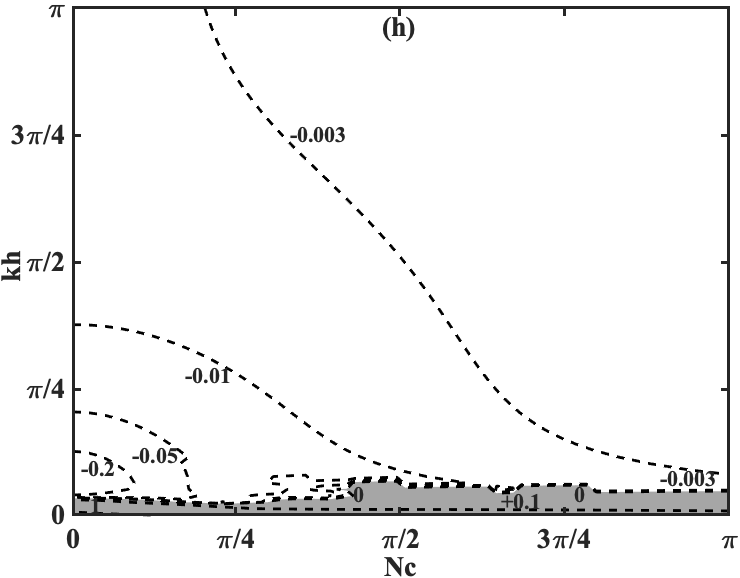}
\includegraphics[width=0.33\linewidth, height=0.3\linewidth]{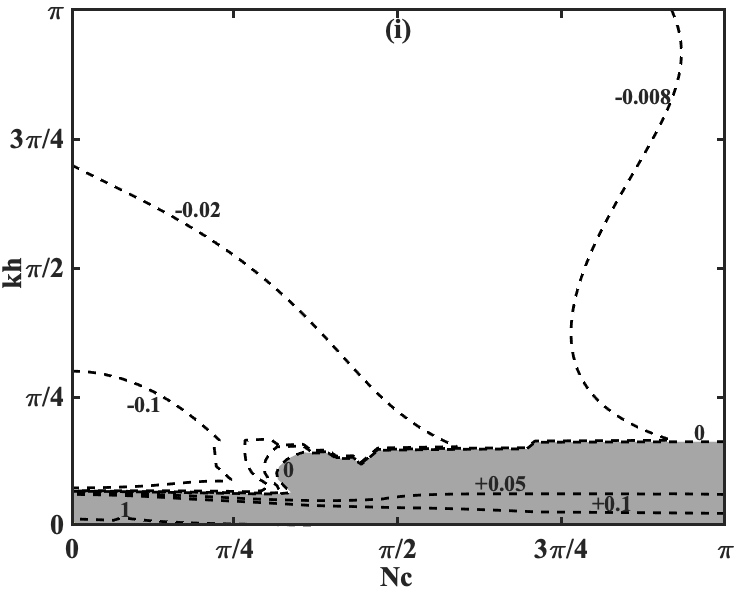}
\caption{Group velocity ratio contours, $V_g$, at $\theta=1.0$, $\alpha=0.9$ and (a) $Pe=0.001, Da=-0.01$, (b) $Pe=0.001, Da=0.0$, (c) $Pe=0.001, Da=0.01$, (d) $Pe=0.01, Da=-0.01$, (e) $Pe=0.01, Da=0.0$, (f) $Pe=0.01, Da=0.01$, (g) $Pe=1.0, Da=-0.01$, (h) $Pe=1.0, Da=0.0$ and (i) $Pe=1.0, Da=0.01$.}
\label{fig:Fig2}
\end{figure}

The contour plots for the group velocity ratio, $V_g = \left[\frac{V_{g, num}}{V_{g, exact}}\right]_{Re}$, for Peclet numbers, $Pe = 0.001, 0.01$ and $1.0$, and Damk\"{o}hler numbers, $Da = -0.01, 0.0$ and $0.01$, are presented for two values of $\theta$, namely, $\theta=0.5$ (in figure~\ref{fig:Fig1}) and $\theta=1.0$ (in figure~\ref{fig:Fig2}) in the $(N_c, kh)$-plane. The corresponding contour plots for the absolute phase speed error, $\Delta c = |1 - \frac{c_{num}}{c_{exact}}|$ are shown in figure~\ref{fig:Fig3} and figure~\ref{fig:Fig4} for $\theta=0.5$ and $\theta=1.0$, respectively. Regions of favorable spectral properties ($V_g > 0$ and $\Delta c \le 0.1$) are highlighted with grey color.
\begin{figure}[htbp]
\includegraphics[width=0.33\linewidth, height=0.3\linewidth]{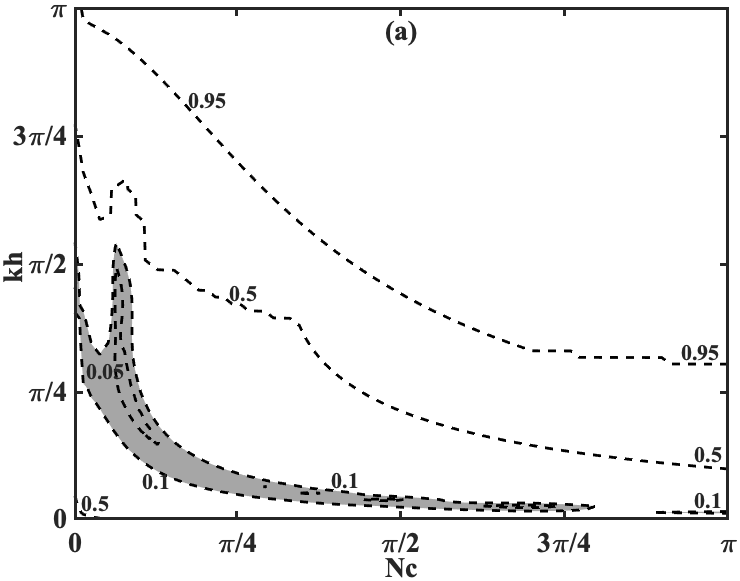}
\includegraphics[width=0.33\linewidth, height=0.3\linewidth]{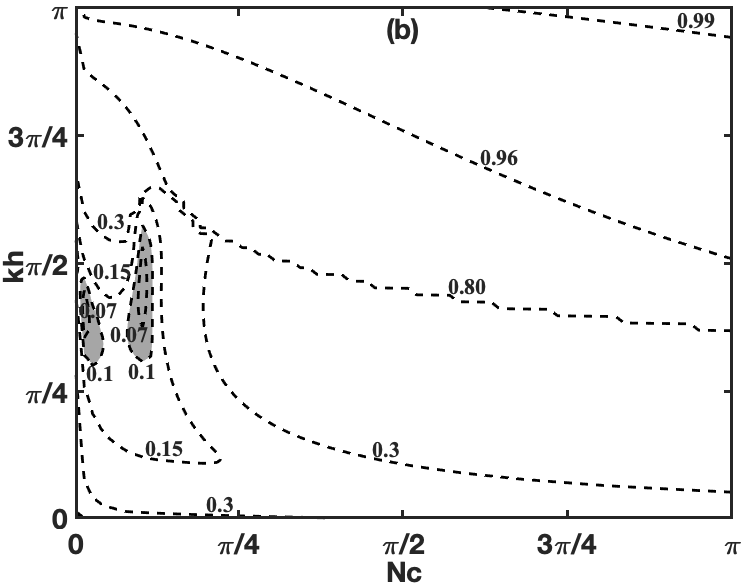}
\includegraphics[width=0.33\linewidth, height=0.3\linewidth]{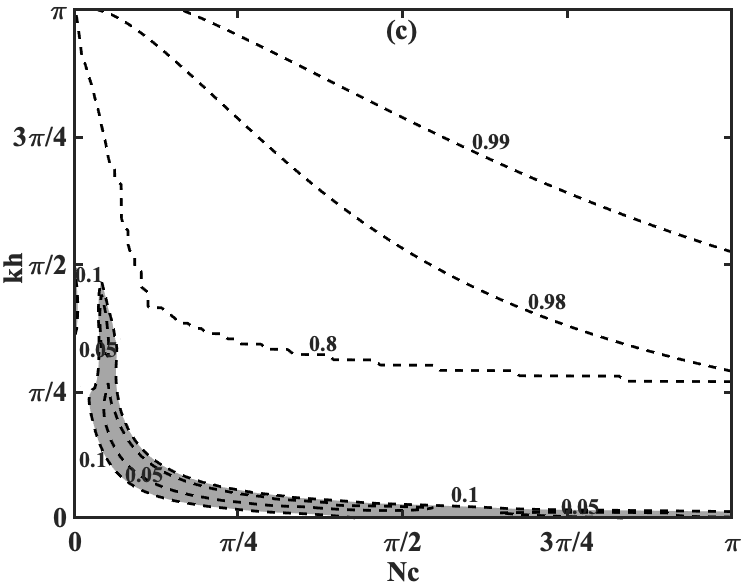}
\vskip 1pt
\includegraphics[width=0.33\linewidth, height=0.3\linewidth]{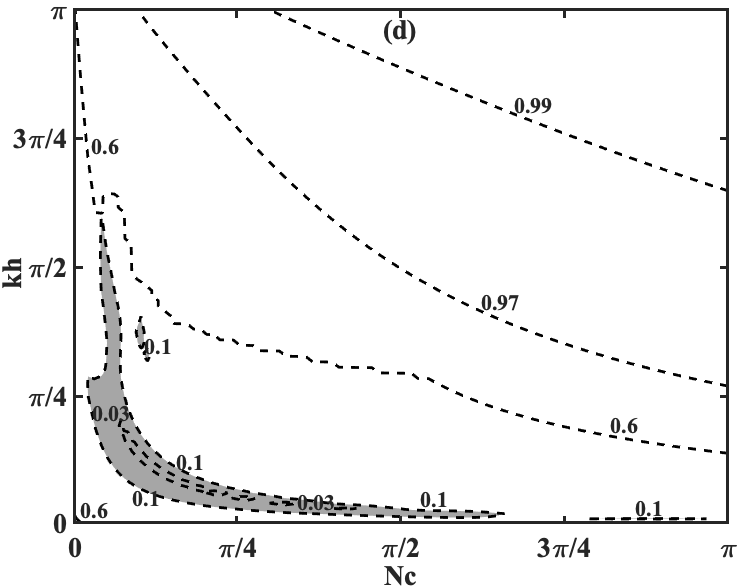}
\includegraphics[width=0.33\linewidth, height=0.3\linewidth]{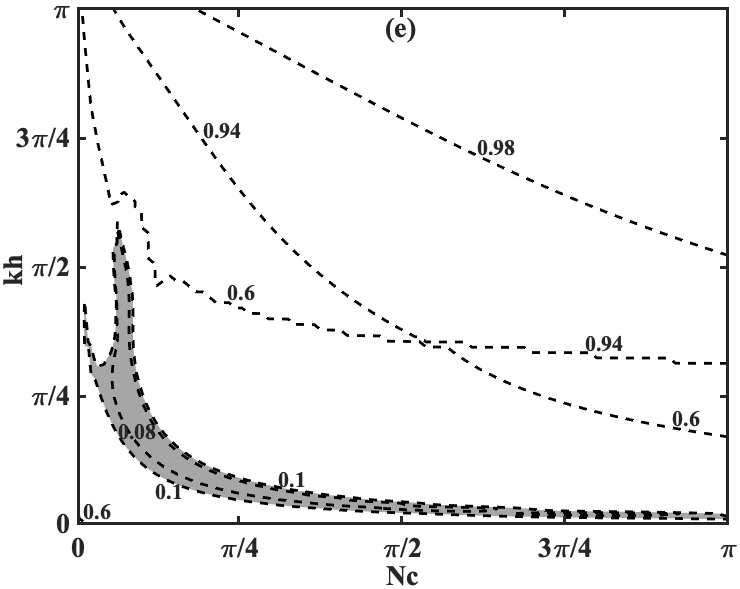}
\includegraphics[width=0.33\linewidth, height=0.3\linewidth]{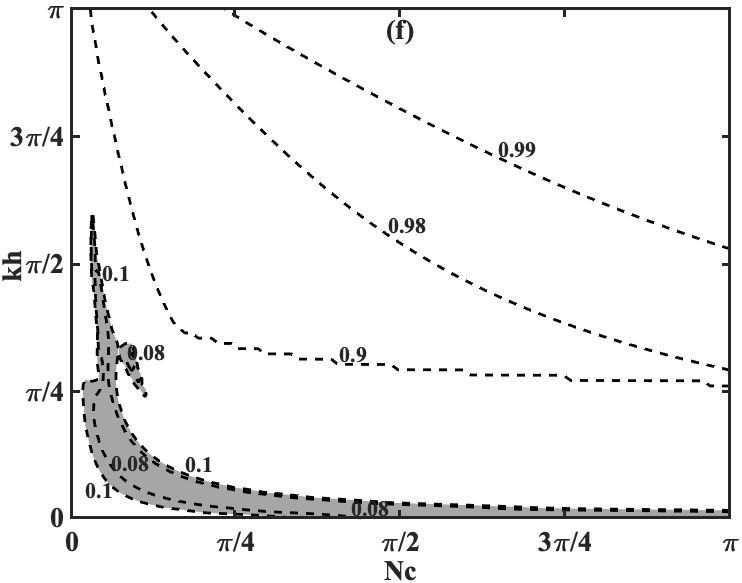}
\vskip 1pt
\includegraphics[width=0.33\linewidth, height=0.3\linewidth]{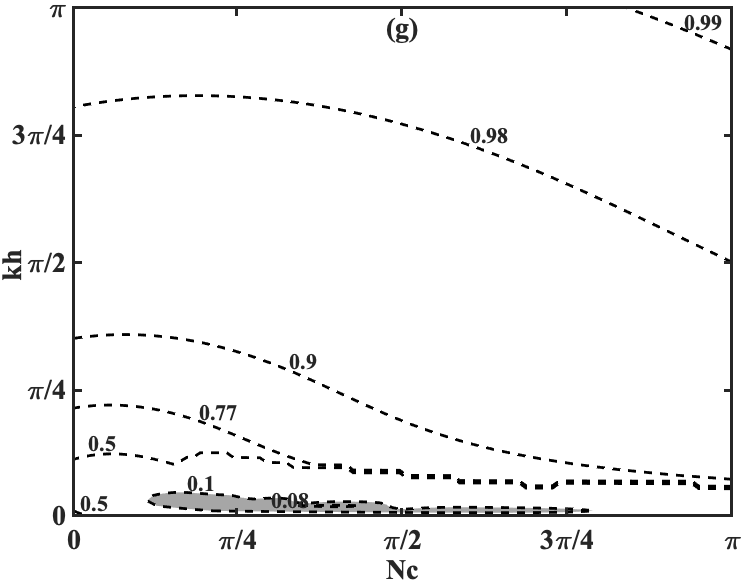}
\includegraphics[width=0.33\linewidth, height=0.3\linewidth]{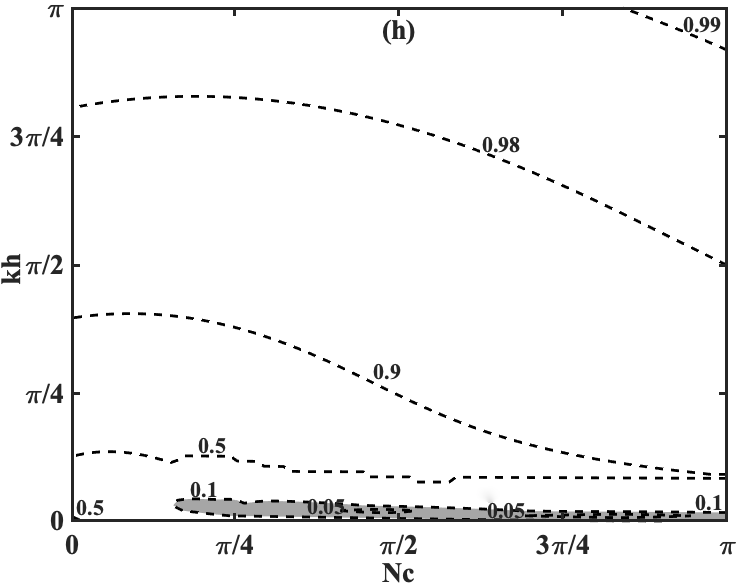}
\includegraphics[width=0.33\linewidth, height=0.3\linewidth]{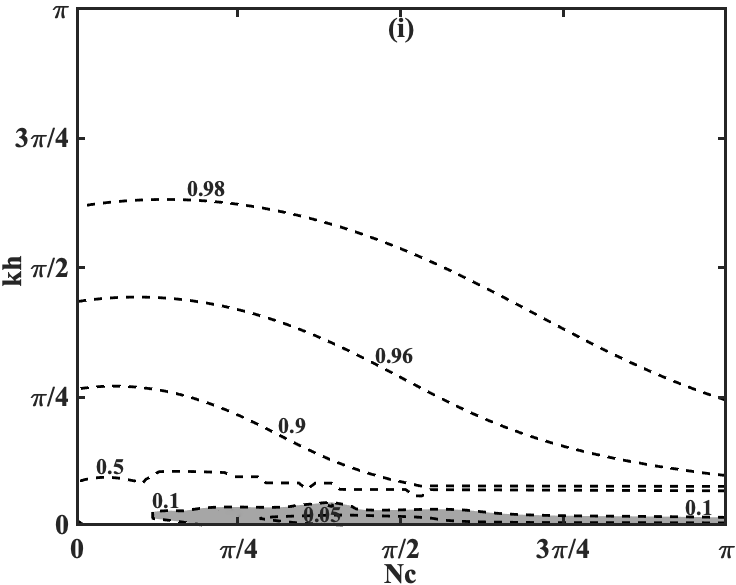}
\caption{Absolute phase speed error contours, $\Delta c$ at $\theta=0.5$, $\alpha=0.9$ and (a) $Pe=0.001, Da=-0.01$, (b) $Pe=0.001, Da=0.0$, (c) $Pe=0.001, Da=0.01$, (d) $Pe=0.01, Da=-0.01$, (e) $Pe=0.01, Da=0.0$, (f) $Pe=0.01, Da=0.01$, (g) $Pe=1.0, Da=-0.01$, (h) $Pe=1.0, Da=0.0$ and (i) $Pe=1.0, Da=0.01$.}
\label{fig:Fig3}
\end{figure}

We outline the spectral properties, namely, the group velocity ratio and the absolute phase speed error, for a specific case of the fractional order, $\alpha = 0.9$. In general, we find that in the limit of vanishingly small values of $kh$, the numerical method has favorable spectral properties (i.~e., $V_g > 0$ and $\Delta c \le 0.1$ in the limit, $kh \rightarrow 0$). The region of positive group velocities ($V_g > 0$) indicate a region where the numerical solution travels in the correct direction and the numerical instabilities in the form of q-waves are avoided~\cite{Sircar2020}. Figure~\ref{fig:Fig1} and~\ref{fig:Fig2} indicate that this favorable region is restricted to smaller values of $kh$ with larger values of $Pe$ as well as with smaller values of $\theta$. While the former observation can be attributed to the fact that a stiff diffusive term (i.~e., larger `$K_2({\bf x}, t) \nabla^2 u({\bf x}, t)$' term in equation~\eqref{eqn:method1} can be correctly approximated with smaller grid-size, $h$; the latter observation can be explained through a numerical stabilization due to the implicit treatment of fast time scale term (in this case, the diffusive term). Both of these observations are in congruence with the corresponding analysis of the integer order ADR equations~\cite{Sircar2020}. Similarly, the absolute phase error contours highlight spectrally favorable region ($\Delta c \le 0.1$, equivalently the phase errors are restricted to 10\% of the exact phase speed, see figures~\ref{fig:Fig3}, \ref{fig:Fig4}) at lower values of $Pe$ and at lower values of $\theta$, indicating the fact that the phase errors in this numerical method can be reduced by introducing finer grids~\cite{Sengupta2012}.

We summarize our discussion by indicating two sources of error which are particularly perplexing in the Direct Numerical Simulations (DNS) of FADR equations. The first source of error is the existence of $q-$waves for those set of numerical parameters for which the spatiotemporal discretization is asymptotically stable. In such a situation, the $q-$waves do not attenuate and have to be eliminated by deploying an explicit filter~\cite{Visbal2002}. Another aspect of spectral error is related to the Gibbs' phenomenon which occurs as a consequence of sharp discontinuity in the solution and which causes fictitious oscillations, a problem which can be remedied using high accuracy dispersion relation preserving schemes (which has atleast shown promise in integer order partial differential equations)~\cite{Sircar2006}. 
\begin{figure}[htbp]
\includegraphics[width=0.33\linewidth, height=0.3\linewidth]{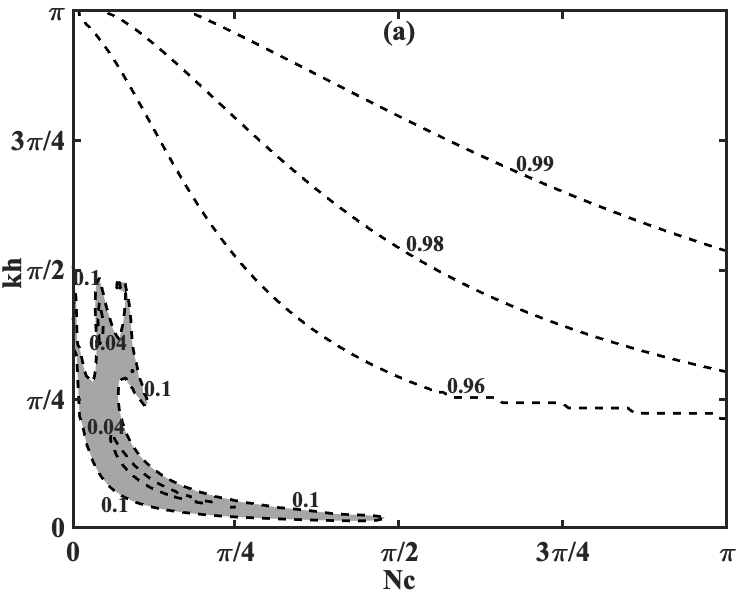}
\includegraphics[width=0.33\linewidth, height=0.3\linewidth]{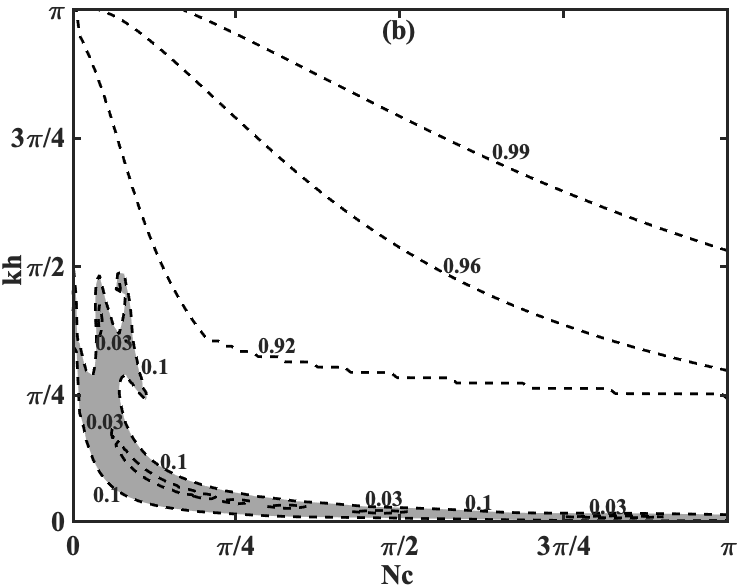}
\includegraphics[width=0.33\linewidth, height=0.3\linewidth]{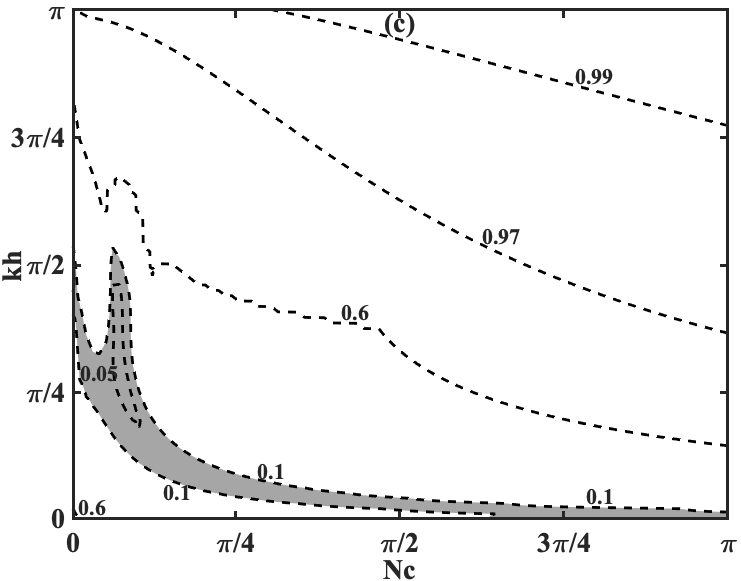}
\vskip 1pt
\includegraphics[width=0.33\linewidth, height=0.3\linewidth]{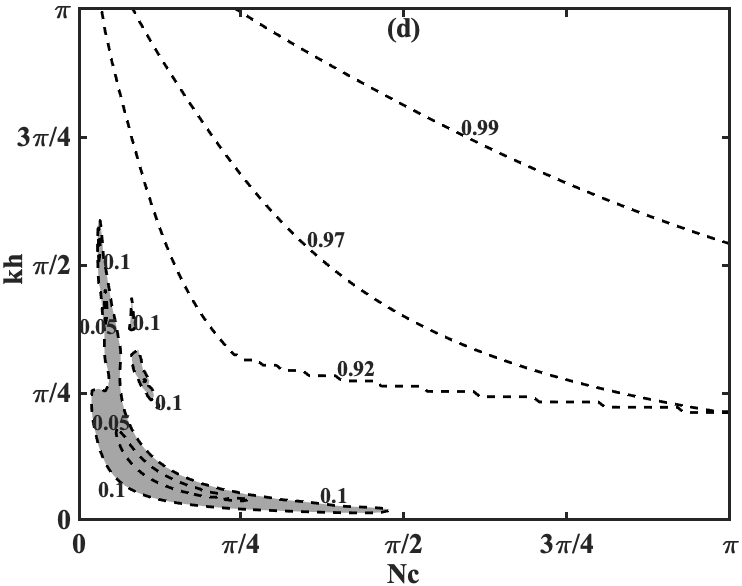}
\includegraphics[width=0.33\linewidth, height=0.3\linewidth]{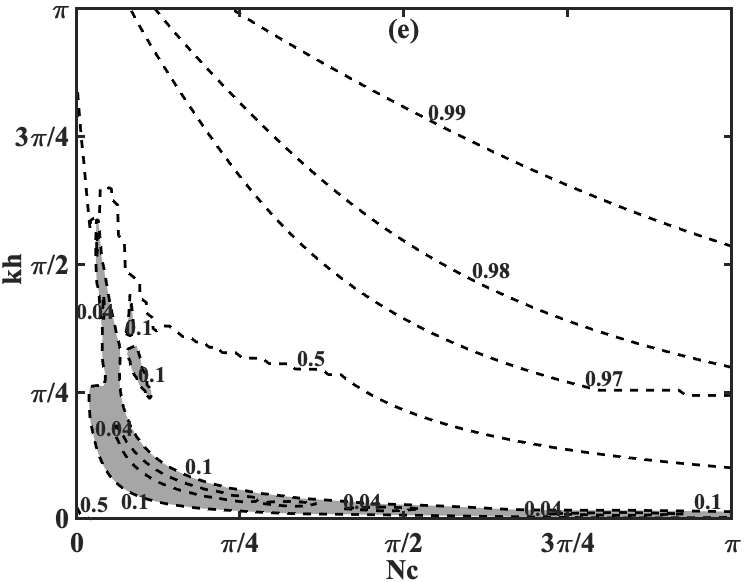}
\includegraphics[width=0.33\linewidth, height=0.3\linewidth]{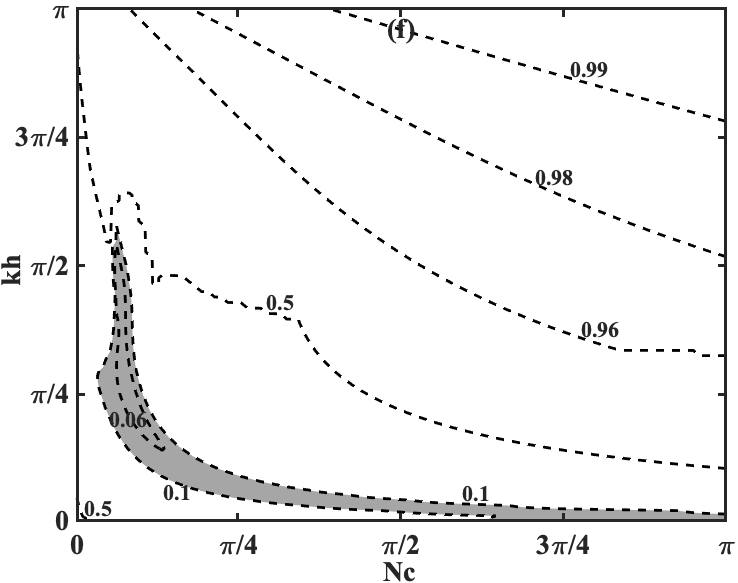}
\vskip 1pt
\includegraphics[width=0.33\linewidth, height=0.3\linewidth]{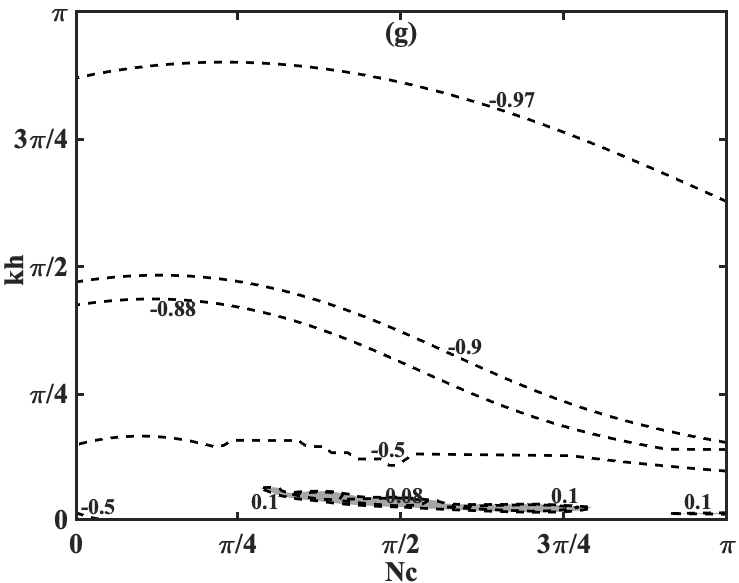}
\includegraphics[width=0.33\linewidth, height=0.3\linewidth]{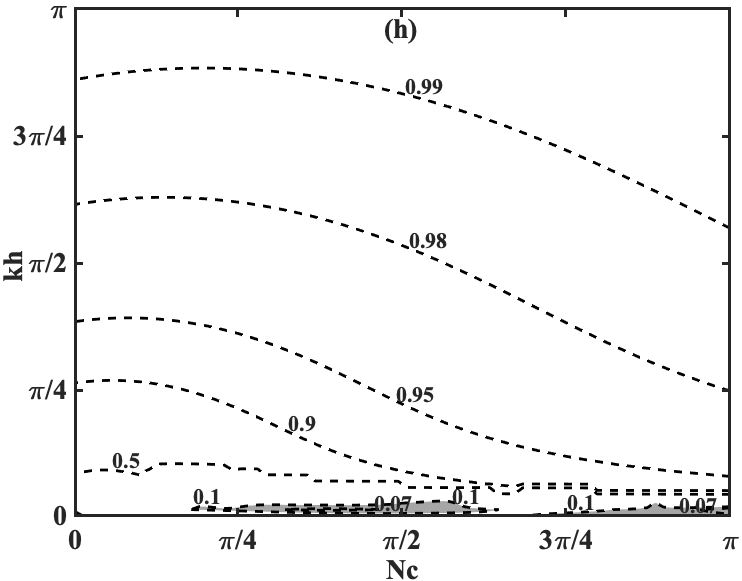}
\includegraphics[width=0.33\linewidth, height=0.3\linewidth]{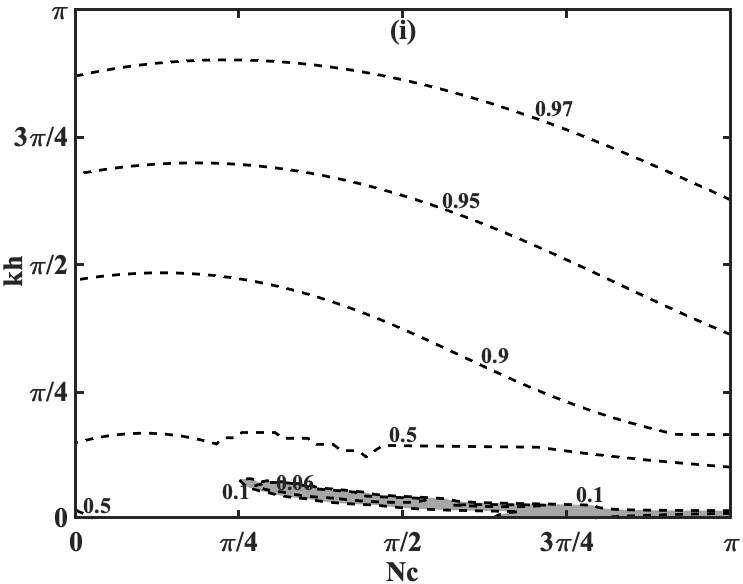}
\caption{Absolute phase speed error contours, $\Delta c$ at $\theta=1.0$, $\alpha=0.9$ and (a) $Pe=0.001, Da=-0.01$, (b) $Pe=0.001, Da=0.0$, (c) $Pe=0.001, Da=0.01$, (d) $Pe=0.01, Da=-0.01$, (e) $Pe=0.01, Da=0.0$, (f) $Pe=0.01, Da=0.01$, (g) $Pe=1.0, Da=-0.01$, (h) $Pe=1.0, Da=0.0$ and (i) $Pe=1.0, Da=0.01$.}
\label{fig:Fig4}
\end{figure}

\section{Method validation: 2D fractional diffusion equation} \label{sec:MV}
Using the spectrally relevant parameter values discussed in section~\ref{subsec:SA}, the $\theta$-method is verified by solving the 2D fractional diffusion equation ($K_1 = f = 0$ and $K_2 = 1$ in equation~\eqref{eqn:method1}), originally proposed by Brunner~\cite{Brunner2010}, for the case of (a) zero Dirichlet boundary conditions and (b) zero Neumann conditions, over the square domain, $(x, y) \in \Gamma = [-1,\,1]^2$. The initial condition for the two test cases can be outlined as follows,
\begin{enumerate}[i)]
\item {\bf Dirichlet case:} $u_0(x, y) = \cos\left( \frac{\pi}{2} x \right) \cos\left( \frac{\pi}{2} y \right)$,
\item {\bf Neumann case:} $u_0(x, y) = \sin\left( \frac{\pi}{2} x \right) \sin \left( \frac{\pi}{2} y \right)$,
\end{enumerate}
and the exact solution for both cases is given by
\beq
u_{Exact}(x, y, t) = E_\alpha \left( -\frac{1}{4} \pi^2 t^\alpha \right) u_0(x, y), 
\label{eqn:MV1}
\eeq
where $E_\alpha (z) = \sum^\infty_{k=0} \frac{z^k}{\Gamma(\alpha k + 1)}, \quad \alpha > 0$, is the one-parameter Mittag-Leffler function. 
\begin{figure}[htbp]
\includegraphics[width=0.495\linewidth, height=0.43\linewidth]{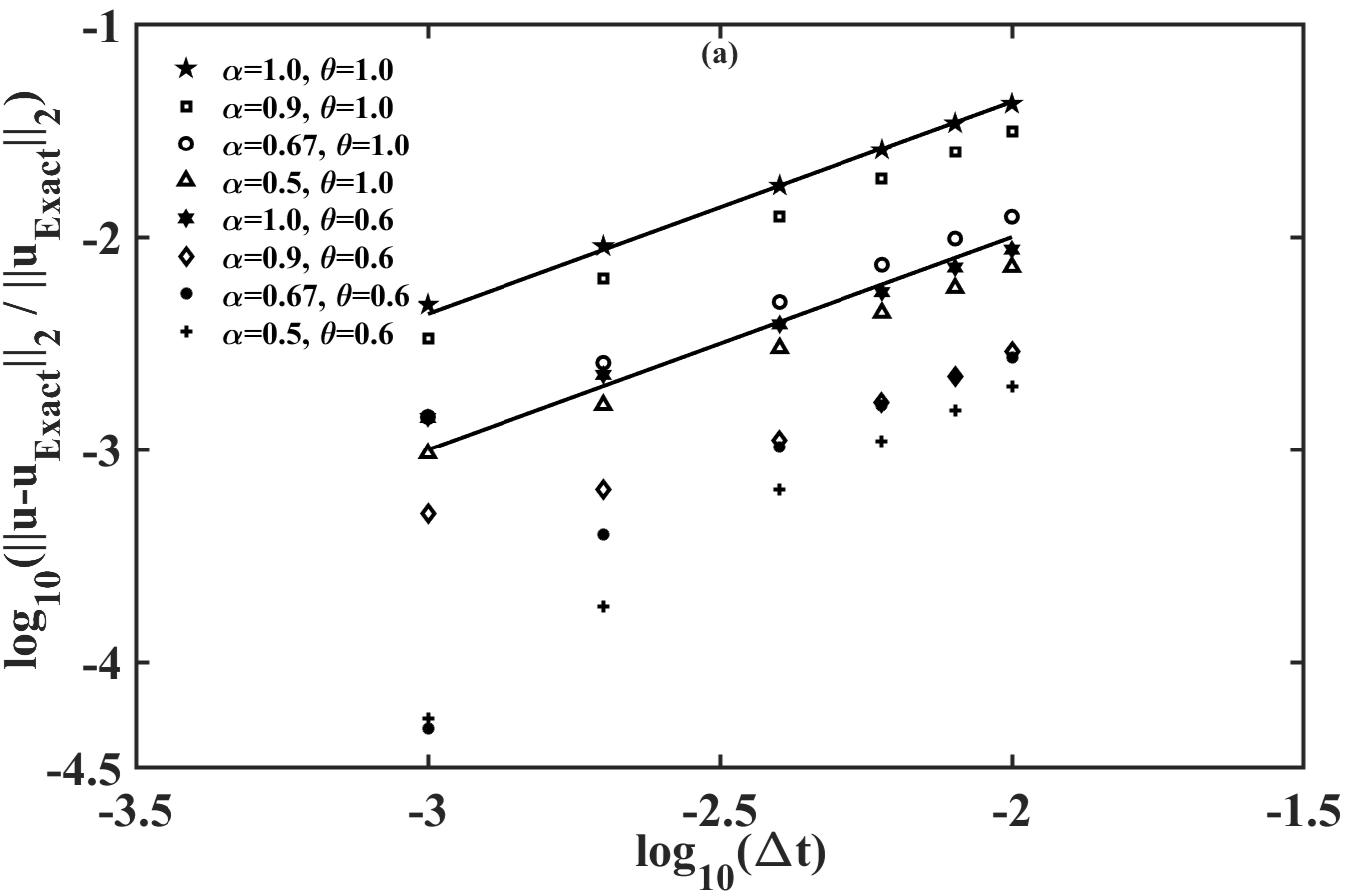}
\includegraphics[width=0.495\linewidth, height=0.43\linewidth]{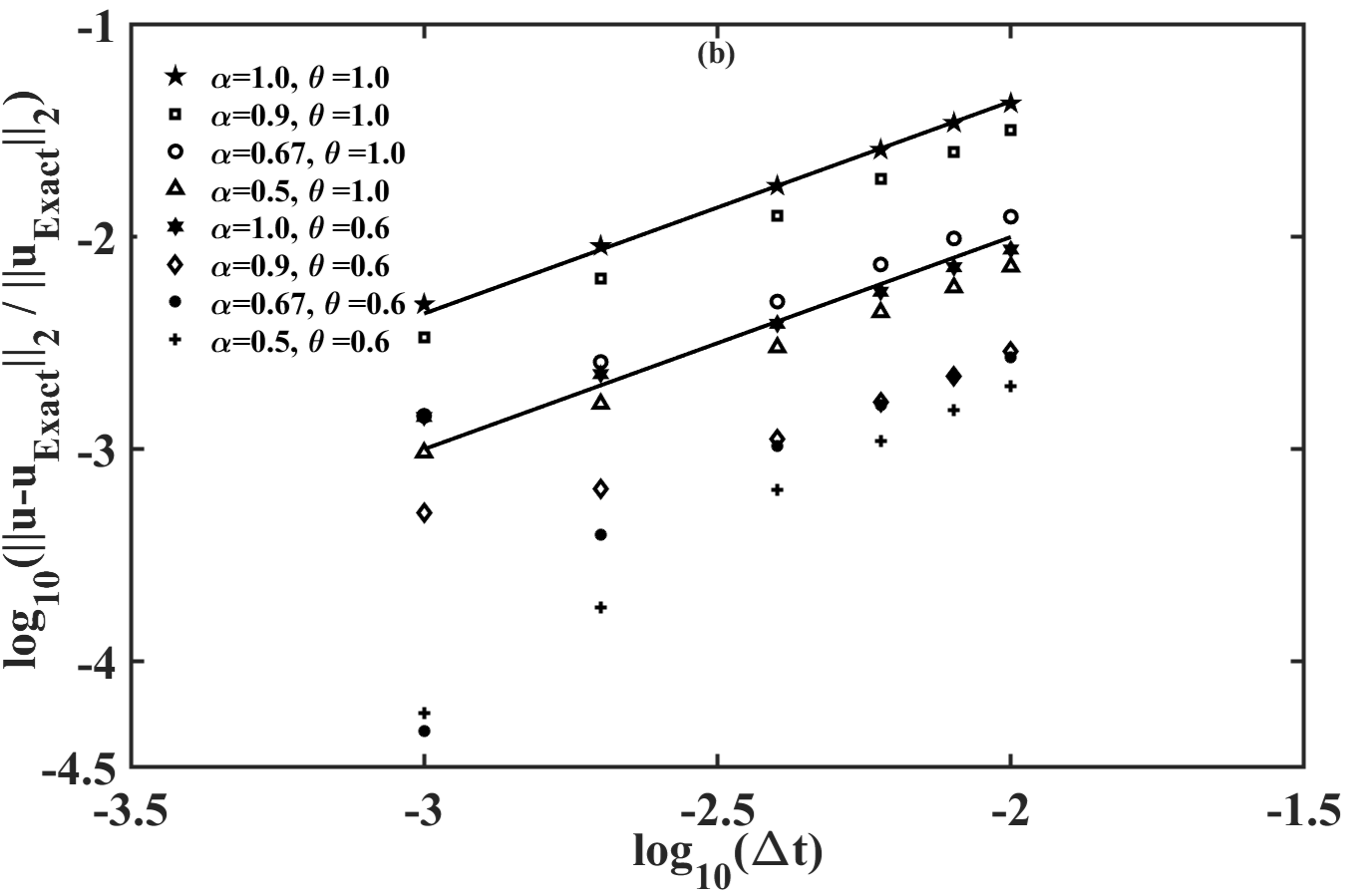}
\caption{Relative error for the solution of the 2D fractional diffusion equation at simulation time $T=0.35$, at $\alpha=1.0, \theta=1.0$ ($\star$); $\alpha=0.9, \theta=1.0$ ($\square$); $\alpha=0.67, \theta=1.0$ ($\circ$); $\alpha=0.5, \theta=1.0$ ($\triangle$); $\alpha=1.0, \theta=0.6$ ($\ast$); $\alpha=0.9, \theta=0.6$ ($\diamond$); $\alpha=0.67, \theta=0.6$ ($\bullet$); $\alpha=0.5, \theta=0.6$ ($+$); and for (a) Dirichlet boundary conditions, and (b) Neumann boundary conditions.}
\label{fig:Fig5}
\end{figure}

The domain, $\Gamma \cup \partial \Gamma$, is discretized using $51 \times 51$ points and the relative error in $l_2-norm$, $\frac{\| \widetilde{u} - u_{Exact}\|_2}{\| u_{Exact} \|_2}$, versus discretization time, $\Delta t$, is shown in figure~\ref{fig:Fig5}. The total computation time is fixed at $T=0.35$, for all cases. We note that slope of the error curves at $\alpha=1.0$ is one (highlighted with solid curves in figure~\ref{fig:Fig5}a,b). This observation can be explained since at this value of $\alpha$, the $\theta$-method reduces to the standard, integer order, Euler method, which is first order accurate. Further, note that the slope of the error curves for $\alpha < 1.0$ is sub-linear, indicating that for the fractional-order diffusion equation, the $\theta$-method has sub-linear order of accuracy.

\section{Numerical simulation: 2D fractional viscoelastic channel flow} \label{sec:NS}
Next, we describe the incompressible, subdiffusive dynamics of a planar (2D) viscoelastic channel flow for polymer melts and present the numerical solution using the $\theta$-method. In an earlier work~\cite{Chauhan2021}, the authors have derived of the mathematical model (which is a nonlinear 2D coupled FADR equation) and the linear stability of the incompressible, subdiffusive dynamics of such flows. Using the following scales for non-dimensionalizing the governing equations: the height of the channel $H$ for {\color{black}length}, the timescale $T$ corresponding to maximum base flow velocity (i.~e., $T = (H / \mathcal{U}_0)^{1/\alpha}$) for time and $\rho \mathcal{U}^2_0$ for pressure and stresses (where $\rho, \mathcal{U}_0$ is the density and the velocity scale, respectively), we summarize the model in streamfunction-vorticity formulation as follows,
\bseq \label{eqn:FullSystem}
\begin{align}
&Re \left[ \frac{\partial^\alpha \Omega}{\partial t^\alpha} + {\bf v} \cdot \nabla \Omega \right] = \nu \nabla^2 \Omega + (1-\nu) \nabla \times \nabla \cdot ({\bf A} +  {\bf F}), \label{eqn:Momentum} \\
& \nabla^2 \Psi = -\Omega, \label{eqn:Poisson} \\
&\frac{\partial^\alpha {\bf A}}{\partial t^\alpha} + {\bf v}\cdot \nabla {\bf A} - (\nabla {\bf v})^T {\bf A} -{\bf A}\nabla {\bf v} = \frac{{\bf D} - {\bf A}}{We}, \label{eqn:ExtraStress}
\end{align}
\eseq
where the variables $t, \Psi, {\bf v} = (u, v) = (\frac{\partial \Psi}{\partial y}, -\frac{\partial \Psi}{\partial x}), {\bf A}, \Omega = \nabla \times {\bf v}$ denote time, streamfunction, velocity, elastic stress tensor and vorticity, respectively. ${\bf F} = \frac{\mu}{1 - \nu} ({\bf E}:{\bf E}^T)$ is the finger tensor and ${\bf E} = e^{t(\nabla {\bf v})^{\frac{1}{\alpha}}}$ is the deformation gradient tensor (the operator $(\, : \,)$ represents the tensor inner product). The term `$\left(\nabla \times \nabla \cdot {\bf F}\right)$' represents the external body force. The parameters $\eta_s, \eta_p, \eta_0 = \eta_s + \eta_p$, $\nu = \eta_s / \eta_0$ and $\mu$ are the solvent viscosity, the polymeric contribution to the shear viscosity, the total viscosity, the viscous contribution to the total viscosity of the fluid and strength of the body force, respectively. The dimensionless groups characterizing inertia and elasticity are Reynolds number, $Re = \frac{\rho \mathcal{U}_0 H}{\eta_0}$, and Weissenberg number, $We = \frac{\lambda^\alpha \mathcal{U}_0}{H}$, respectively. The parameter, $\lambda$, is the polymer relaxation time. Note that equation~\eqref{eqn:ExtraStress} represents fractional version of the regular Oldroyd-B model for viscoelastic fluids~\cite{Sircar2019}, derived in a recently reported work~\cite{Chauhan2021}. Two specific cases of the fractional derivative are considered, namely, monomer diffusion in Rouse chain melts ($\alpha = \nicefrac{1}{2}$)~\cite{Rouse1953}, and in Zimm chain solution ($\alpha = \nicefrac{2}{3}$)~\cite{Zimm1956}. 

\subsection{Initial and boundary conditions}\label{subsec:IBC}
\paragraph{Initial conditions:} A rectilinear coordinate system is used with $x, y$ denoting the channel flow direction and the transverse direction, respectively. The origin of this coordinate system is chosen at the left end of the lower wall of the channel. Initially, we assume that the flow is a plane (2D) Poiseuille flow with its variation entirely in the transverse direction, namely,
\beq
{\bf U}_0 = (y - y^2) {\bf e_x},
\label{eqn:IF}
\eeq
where ${\bf e_x}$ is the unit vector along x-direction. The base state elastic stress tensor, ${\bf A_0} = [{A_0}_{i j}]$, is chosen as,
\begin{align}
{A_0}_{11} = 0, \nonumber \\
{A_0}_{12} = {A_0}_{21} = 1 - 2y, \nonumber \\
{A_0}_{22} = 2 We (1 - 2y)^2,
\label{eqn:IS}
\end{align}

\paragraph{Boundary conditions:} In order to imitate an infinitely long channel, periodic boundary conditions are assumed at the flow inlet and outlet. No-slip (i.~e., $u = v = 0$) and zero tangential conditions (i.~e., $\frac{\partial u}{\partial x} = \frac{\partial v}{\partial x} = 0$) are imposed on the lower wall ($y = 0$) and the upper wall ($y = 1.0$) of the channel, respectively. Further, incompressibility constraint provides an additional condition on the walls: $\frac{\partial v}{\partial y} = 0$. Since the flow is parallel to the channel walls, the walls may be treated as streamline. Thus, the streamfunction value, $\Psi$, on the wall is set as a constant. That constant (which may be different on the lower and the upper wall) is found from the no-slip condition. Zero tangential condition imply that all tangential derivatives of streamfunction vanish on the wall. Thus, the boundary condition for vorticity is found from the Poisson equation~\eqref{eqn:Poisson},
\beq
\frac{\partial^2 \Psi}{\partial y^2}|_{wall} = -\Omega_{wall}.
\label{eqn:OmegaBC}
\eeq
Finally, the boundary conditions for the elastic stress tensor is constructed from equation~\eqref{eqn:ExtraStress}, coupled with the no-slip and zero tangential conditions, as follows,
\begin{align}
& A_{11} = 0, \nonumber \\
& A_{12} + We \frac{\partial^\alpha A_{12}}{\partial t^\alpha} - \frac{\partial u}{\partial y} = 0, \nonumber \\
& A_{22} + We \left( \frac{\partial^\alpha A_{22}}{\partial t^\alpha} - 2 \frac{\partial u}{\partial y} A_{12}\right)= 0.
\label{eqn:ExtraStressBC}
\end{align}

\subsection{Algorithmic details}\label{subsec:AD}
The size of the domain is chosen to be $(x, y) \in [0,\,5] \times [0,\,1]$. The domain is discretized using $76 \times 51$ points, such that the discrete points are equally spaced at $\Delta x = \frac{5}{75}$ and $\Delta y = \frac{1}{50}$ (excluding the boundary points, where the Dirichlet boundary conditions are imposed) in the $x-$ and the $y-$ directions, respectively. The variable in the $\theta$-method is fixed at $\theta=1.0$. The minimum and the maximum time-step are chosen as $\Delta t_{min}=10^{-3}$ and $\Delta t_{max}=1.6 \times 10^{-2}$, respectively. The time-step in the simulation is increased adaptively using the technique outlined in Section~\ref{subsec:AdaptiveTime}. The Poisson equation~\eqref{eqn:Poisson} is iteratively solved using the Gauss-Siedal iteration technique. Assuming $(N+1)$ (or $(M+1)$) points in the $x$ (or $y$) direction, the size of the coefficient matrix is $N(M-1) \times N(M-1)$. Hence, due to size constraints, the numerical inversion of this coefficient matrix imposes severe memory restrictions. Instead, we note that the coefficient matrix has the following block diagonal structure with $(M-1)$ blocks,
\beq
\begin{pmatrix}
    D_1 & D_2 & 0 & \cdots & 0 \\
    D_2 & \ddots & \ddots & & \vdots \\
    0 & \ddots & \ddots & \ddots& 0\\
    \vdots &  & \ddots & \ddots & D_2\\
    0 & \cdots& 0 & D_2 & D_1 
  \end{pmatrix}
\label{eqn:AD1}
\eeq
where $D_1, D_2$ are $N \times N$ tridiagonal and diagonal matrices, respectively, such that $D_1 = \text{Tridiag} (r_1, R, r_1)$ and $D_2 = \text{Diag}~(r_2)$ and $r_1 = (\Delta x)^{-2}, r_2 = (\Delta y)^{-2}$ and $R = -2(r_1 + r_2)$. Since the exact location of the non-zero entries are known, the invocation of the entire coefficient matrix in the Guass-Siedal iteration is avoided. The solution is updated by utilizing the non-zero entries of each row on a case-by-case basis. The row diagonal dominance of the matrix~\eqref{eqn:AD1} insures that the iteration converges in finite number of steps. Similarly, the vorticity equation~\eqref{eqn:Momentum} involves another Laplacian term and its discretization involves a coefficient matrix identical to the form~\eqref{eqn:AD1} (with $r_1 = -\nu \theta (dx)^{-2}, r_2 = -\nu \theta (dy)^{-2}, R = -2(r_1 + r_2) + \frac{Re (\Delta t)^{-\alpha}}{\Gamma(2-\alpha)}$). Thus, the Laplacian term in equation~\eqref{eqn:Momentum} is treated identically as above.

\subsection{Numerical results}\label{subsec:AD}
The Zimm's model~\cite{Zimm1956} predicts the (`shear rate and polymer concentration independent') viscosity of the polymer solution by calculating the hydrodynamic interaction of flexible polymers (an idea which was originally proposed by Kirkwood~\cite{Kirkwood1954}) by approximating the chains using a bead-spring setup. In contrast, the Rouse model~\cite{Rouse1953} predicts that the viscoelastic properties of the polymer chain via a generalized Maxwell model, where the elasticity is governed by a single relaxation time, which is independent of the number of Maxwell elements (or the so-called `submolecules'). Since we do not wish to study the effects of elasticity, the Weissenberg number is fixed at $We=10.0$. The value of $\mu$ is fixed at $\mu=10^{-2}$ such that the body force term in equation~\eqref{eqn:Momentum} behaves as a perturbative term for the system which is initially at steady state~(\ref{eqn:IF}, \ref{eqn:IS}). Two different values of $\nu$ are considered: $\nu=0.3$ (elastic stress dominated case) and $\nu=0.6$ (viscous stress dominated case).

\paragraph{Zimm's model:} The vorticity contours for the Zimm's model are represented via figures~\ref{fig:Fig7} and~\ref{fig:Fig9}, for the elastic stress dominated case and the viscous stress dominated case, respectively. Observe that the `flow structures' are larger in size and magnitude for the elastic stress dominated case (i.~e., compare the range of the contour values in both figures). Also, observe that the structures appearing in the Zimm's model are significantly smaller in magnitude than the Rouse model (figures~\ref{fig:Fig6} and~\ref{fig:Fig8}). Physically, the formation of these `spatiotemporal macrostructures' are associated with the entanglement of the polymer chains at microscale~\cite{Rubenstein2003}, leading to localized, non-homogeneous regions with higher viscosity. Flows with parameter values, $\nu < 0.5$ (or the elastic stress dominated case), are those associated with higher concentration of polymers per unit volume. Experiments~\cite{Fogelson2015} have shown that non-Newtonian fluids with a larger polymer concentration, have a greater tendency (for the polymer strands) to agglomerate, the so-called `over-crowding effect'~\cite{Doi1996}.
\begin{figure}[htbp]
\includegraphics[width=0.49\linewidth, height=0.3\linewidth]{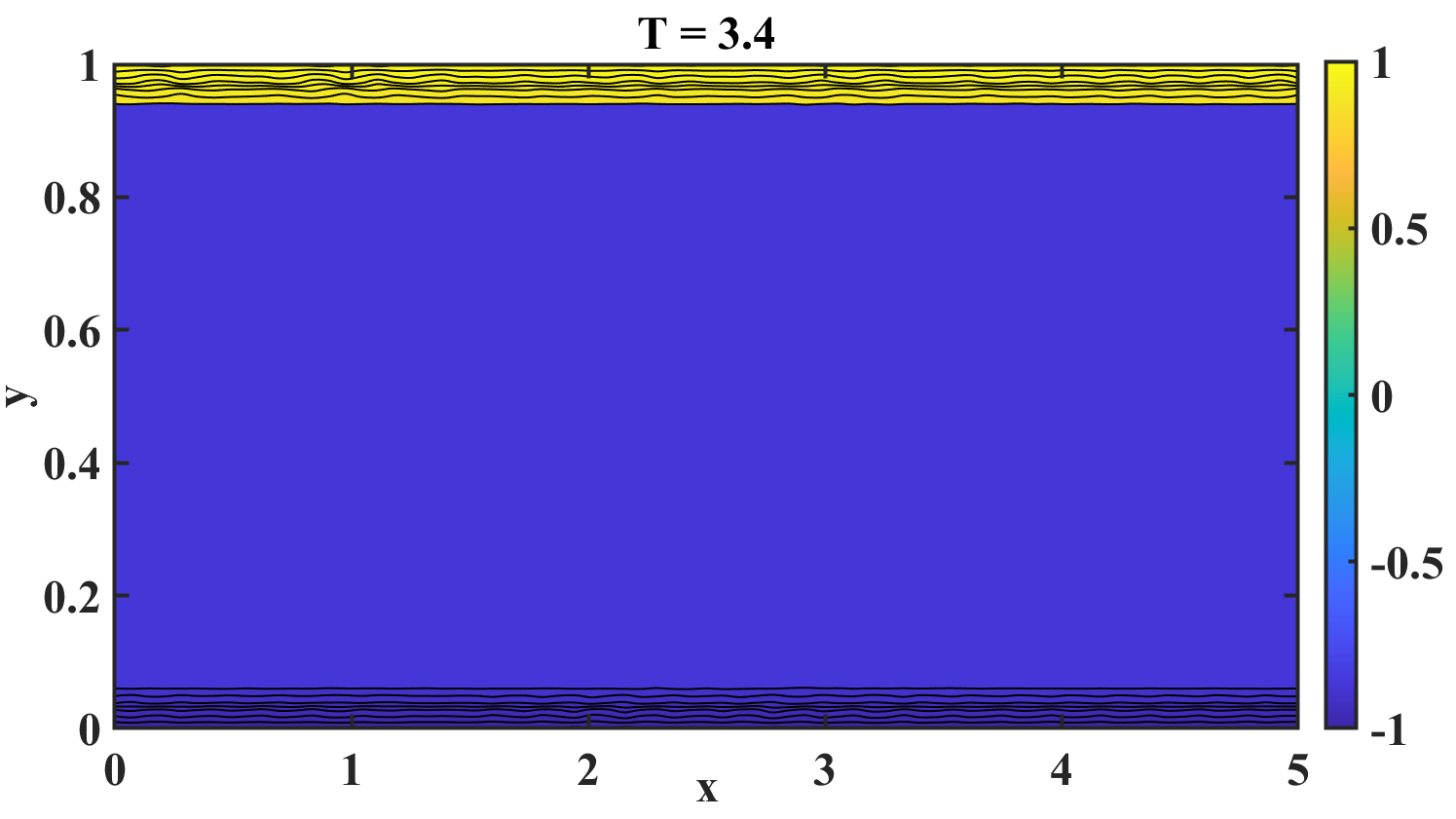}
\includegraphics[width=0.49\linewidth, height=0.3\linewidth]{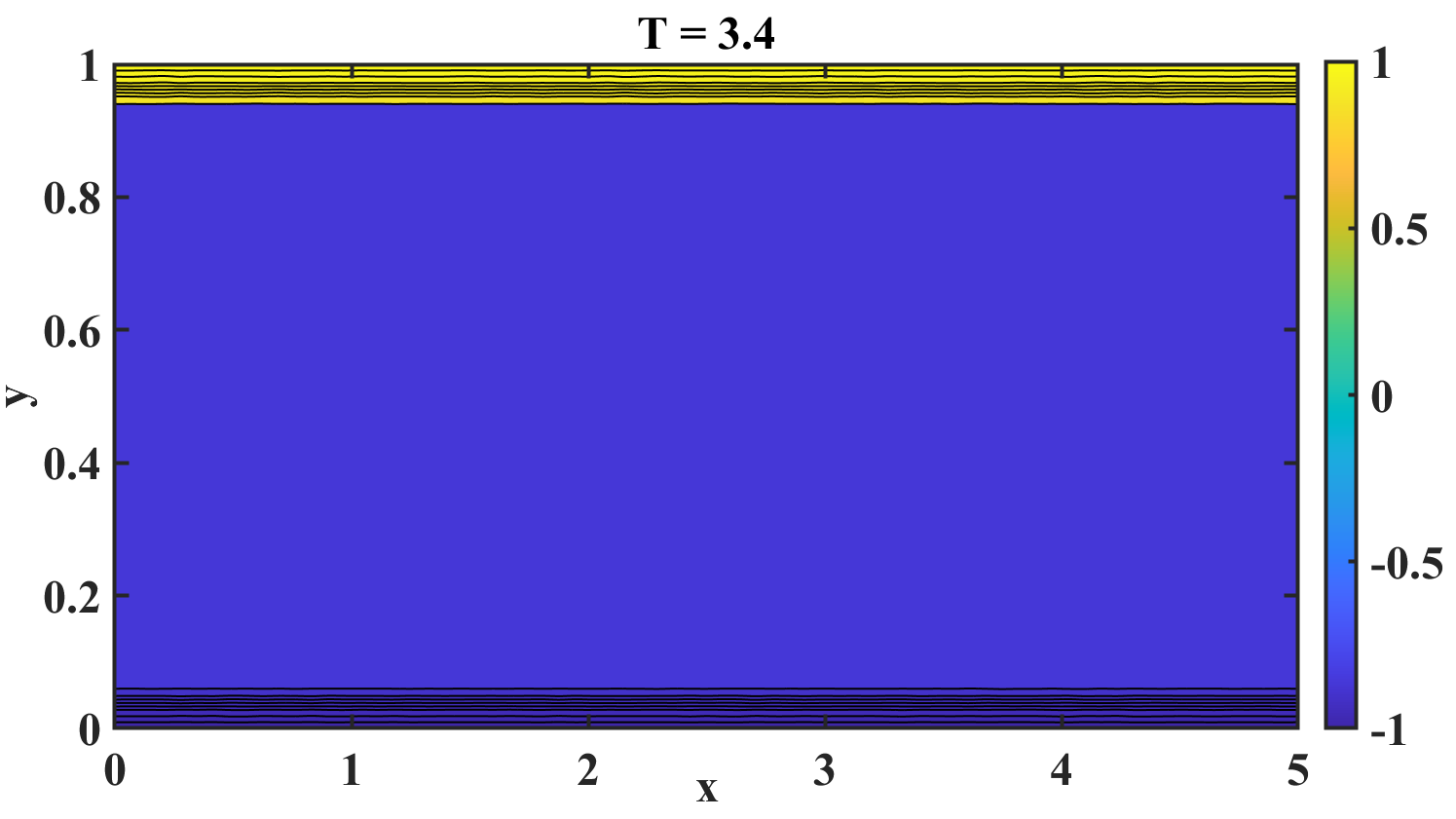}
\vskip 1pt
\includegraphics[width=0.49\linewidth, height=0.3\linewidth]{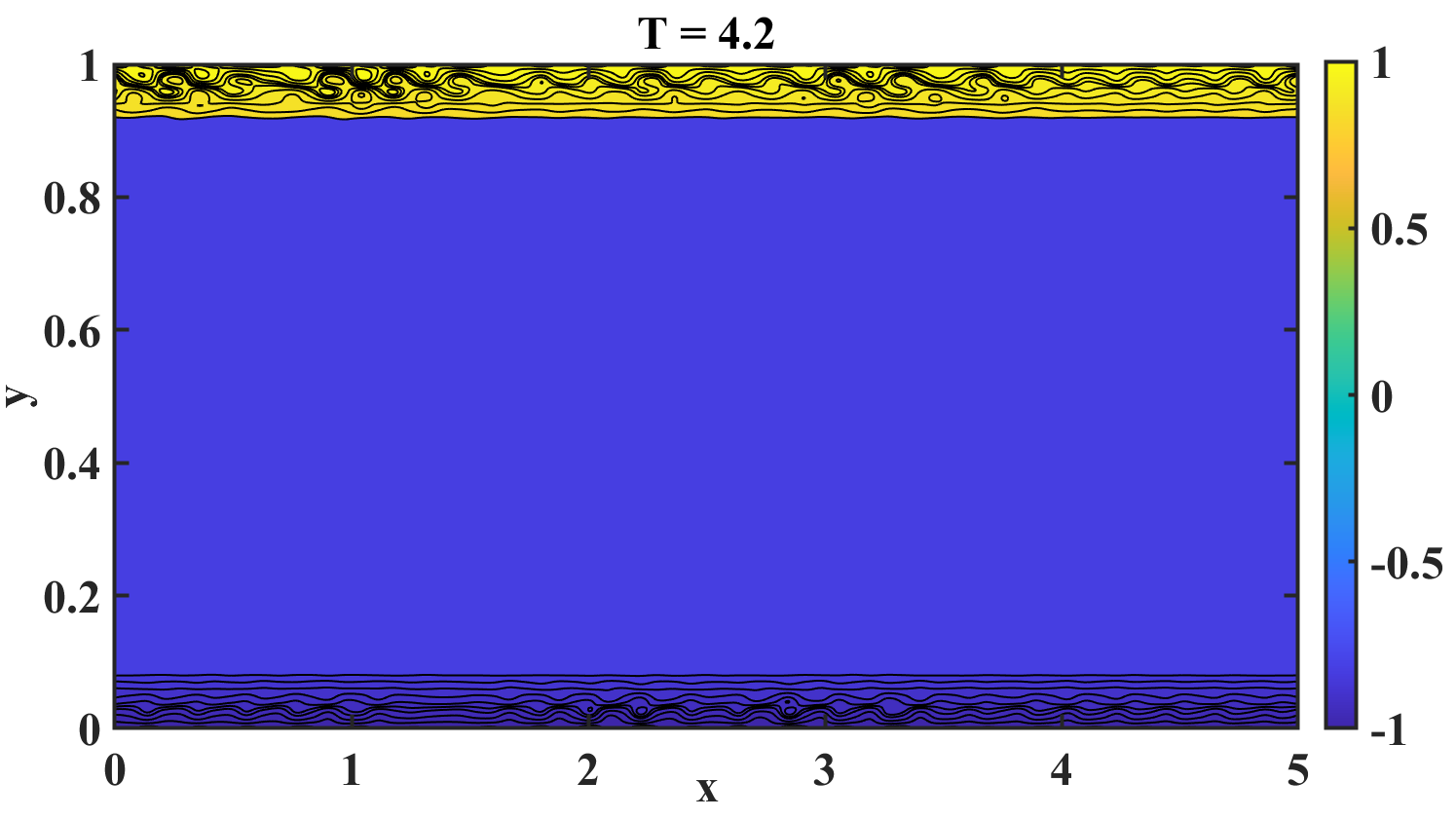}
\includegraphics[width=0.49\linewidth, height=0.3\linewidth]{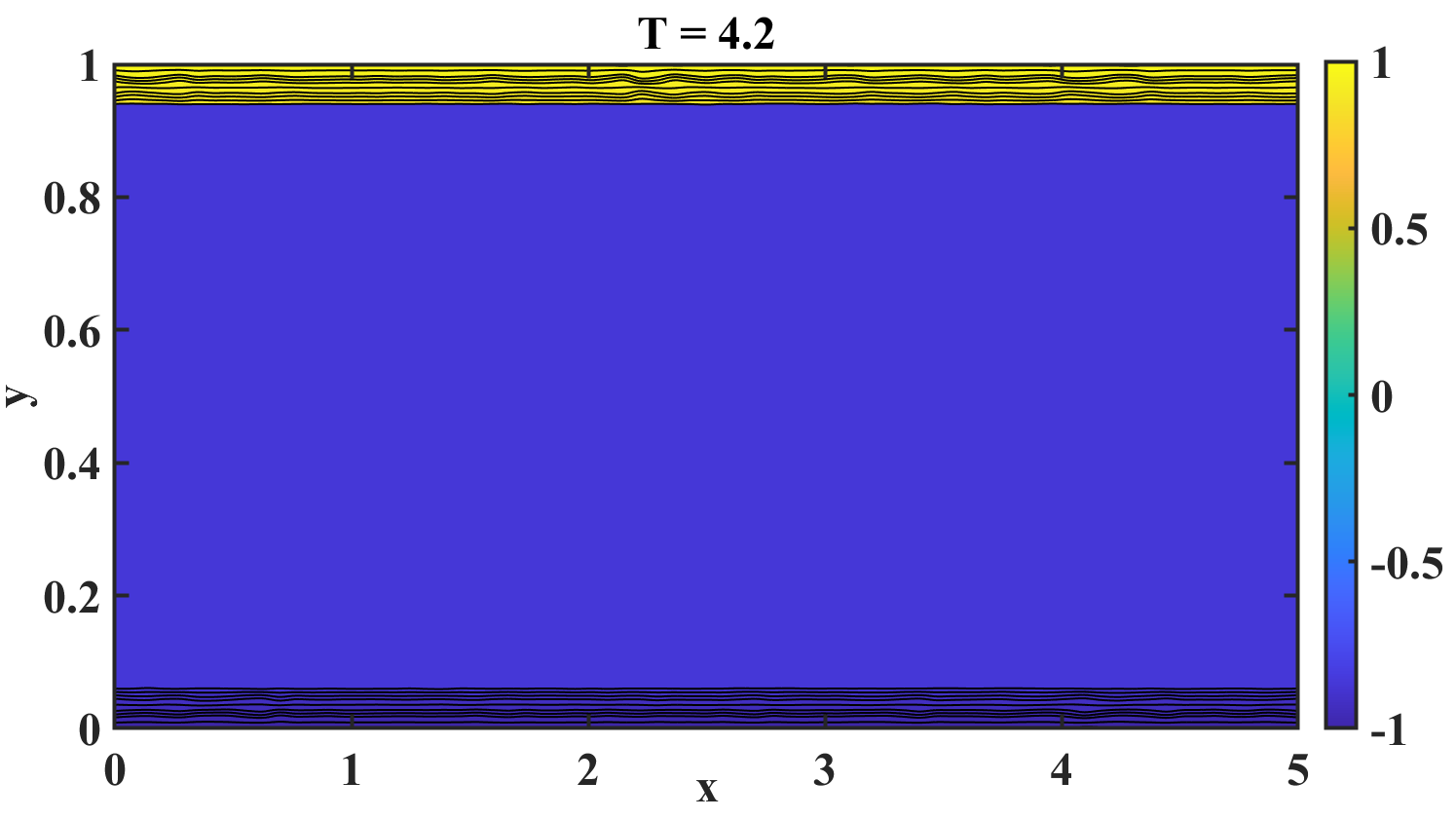}
\vskip 1pt
\includegraphics[width=0.49\linewidth, height=0.3\linewidth]{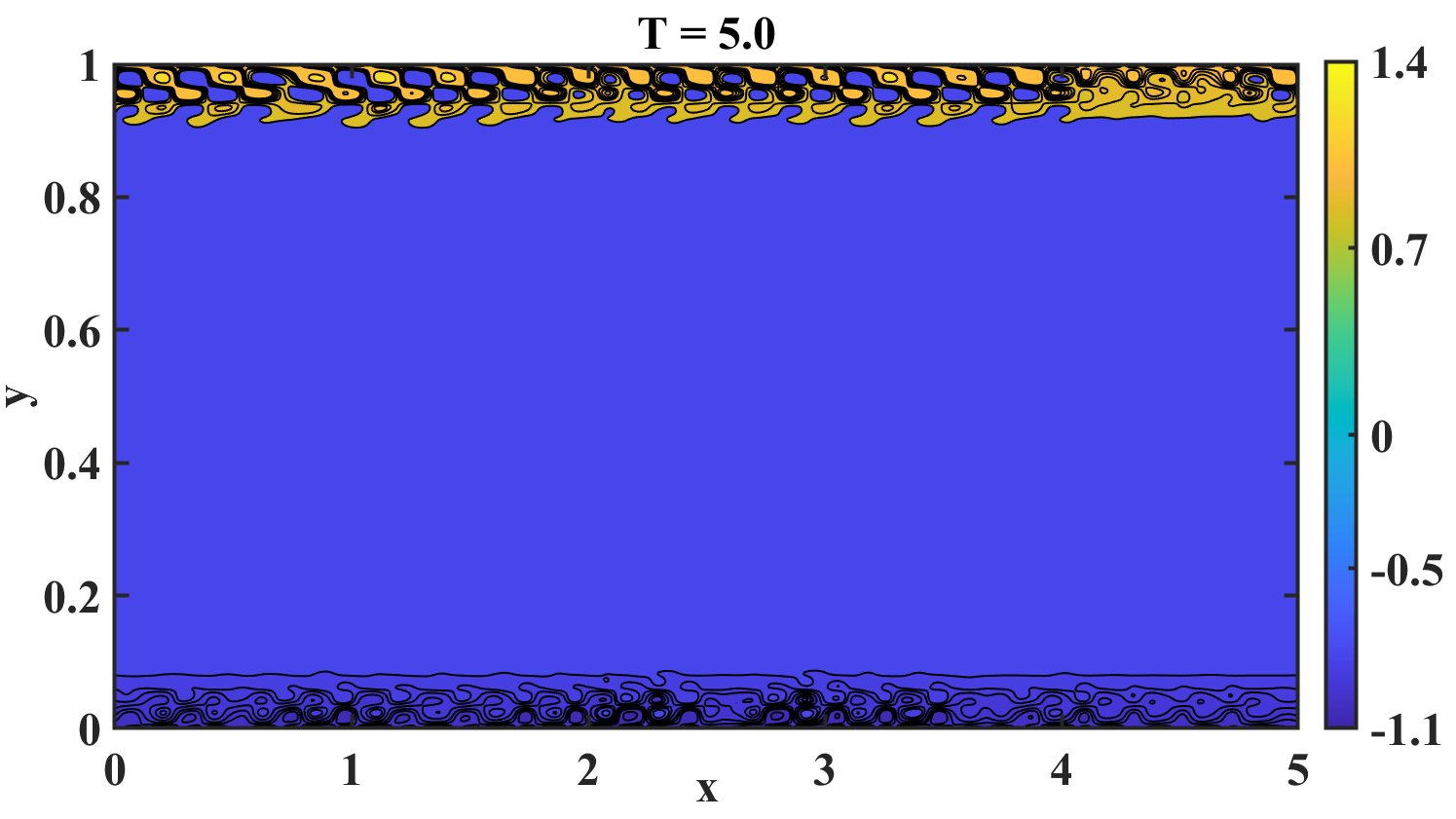}
\includegraphics[width=0.49\linewidth, height=0.3\linewidth]{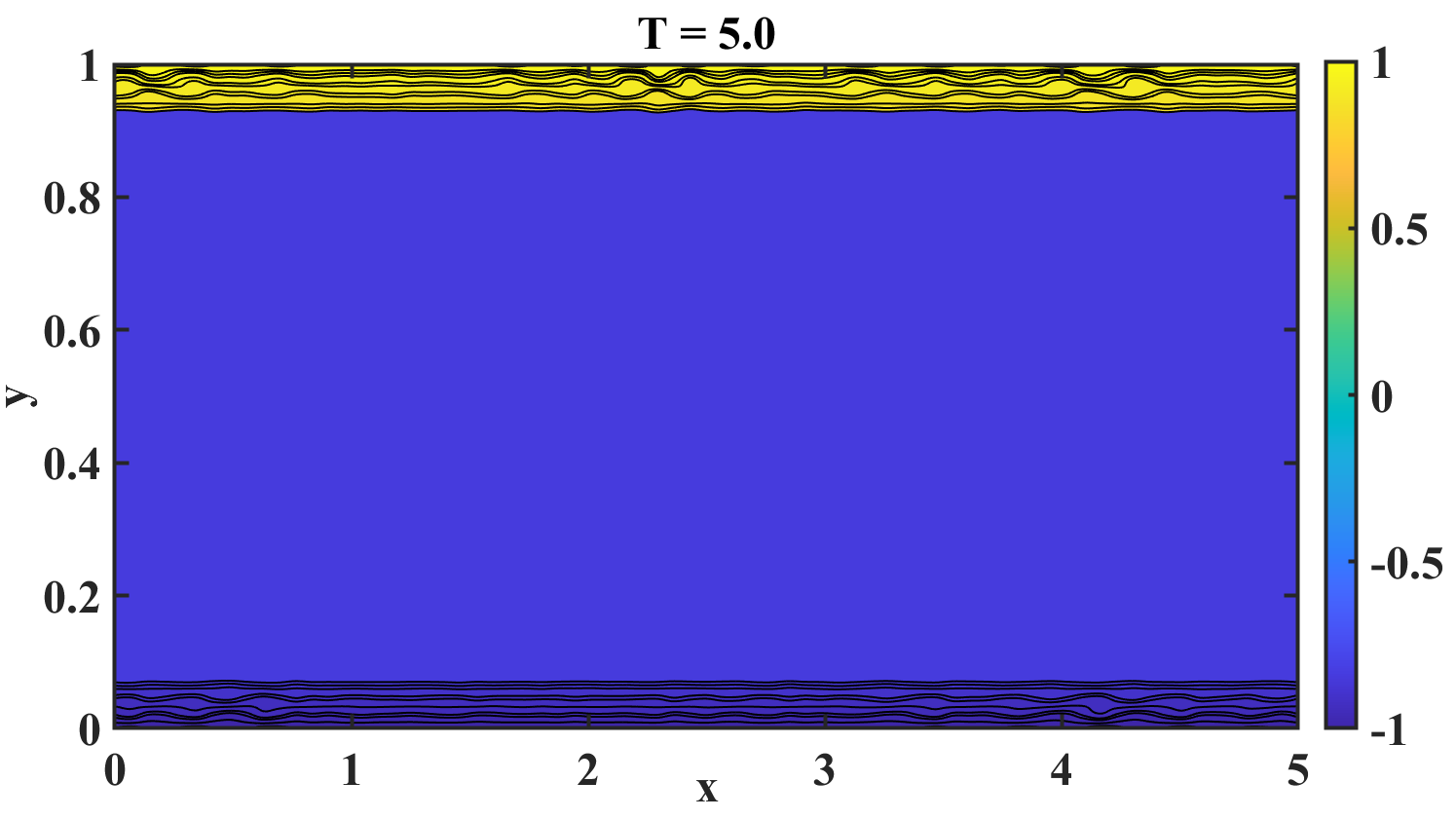}
\vskip 1pt
\includegraphics[width=0.49\linewidth, height=0.3\linewidth]{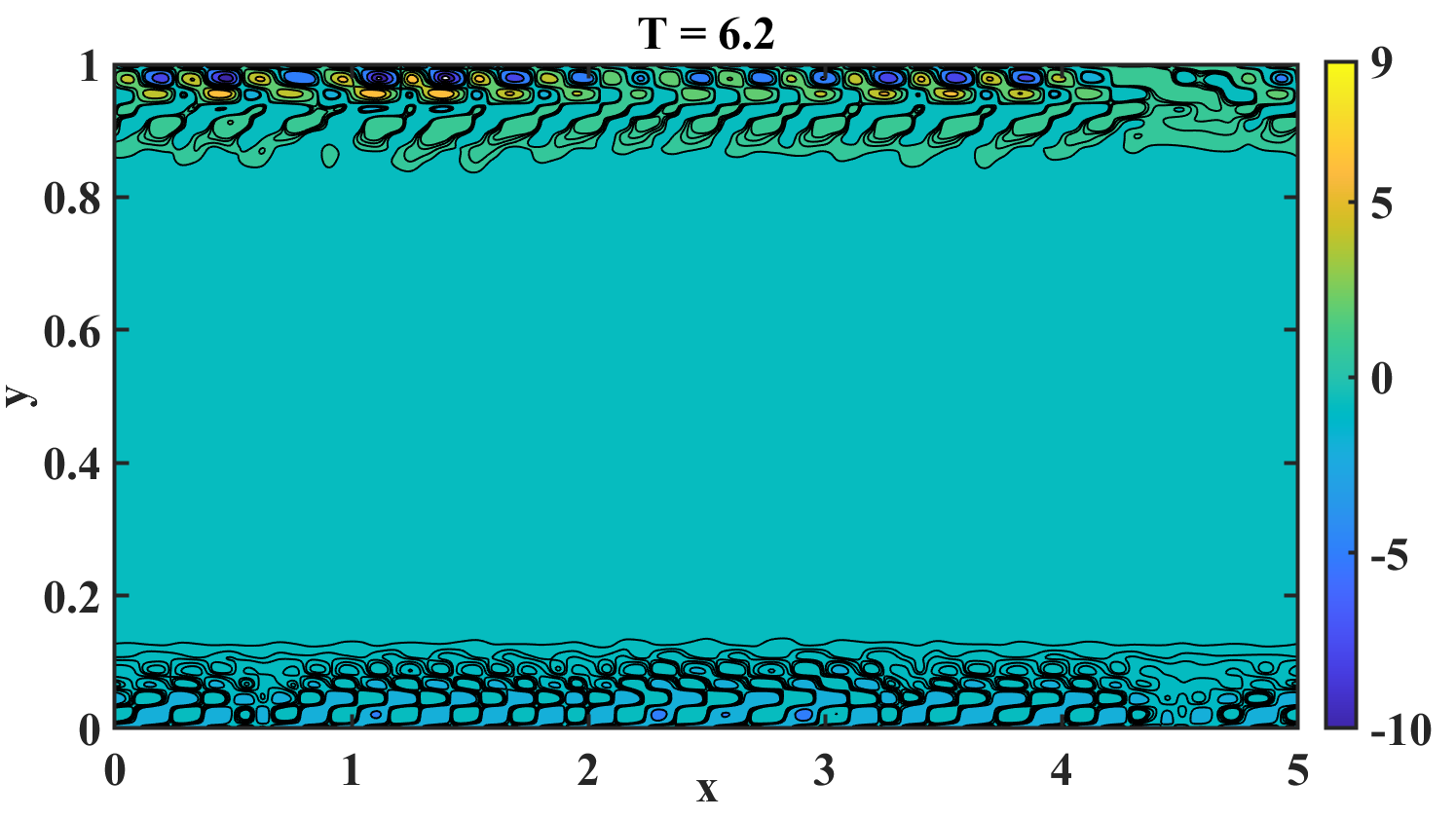}
\includegraphics[width=0.49\linewidth, height=0.3\linewidth]{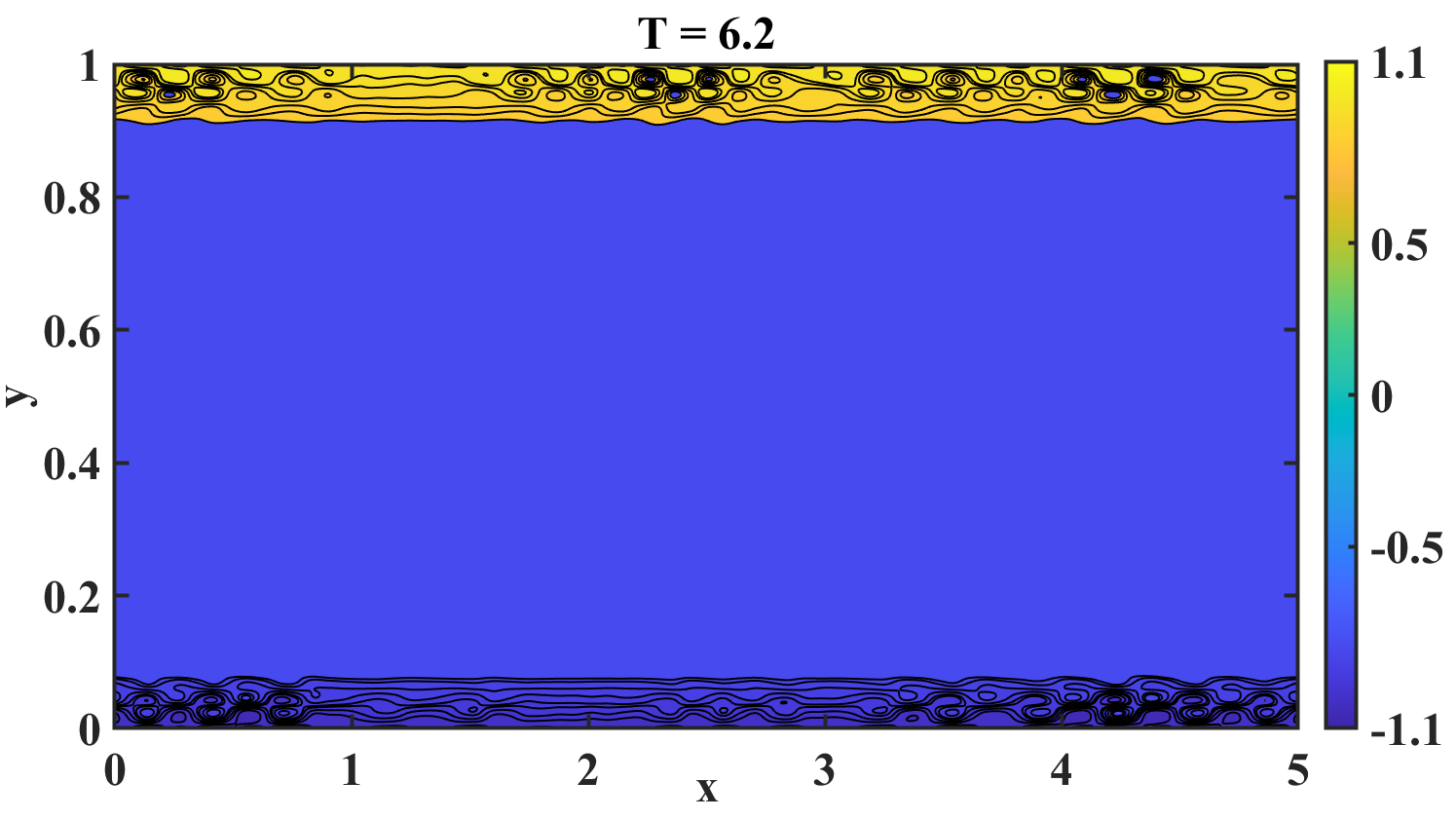}
\caption{Vorticity contours for the elastic stress-dominated ($\nu=0.3$) Zimm model ($\alpha=0.67$) case, shown at parameter values, $We=10.0$ and $Re=70$ (left column) versus $Re=1000$ (right column).}
\label{fig:Fig7}
\end{figure}

Further, notice (on the right hand side column in figure~\ref{fig:Fig7} and~\ref{fig:Fig9}) the disappearance of the structures at larger Reynolds number. This observation can be attributed to the fact that the macrostructures are `washed out' of the channel at higher flow velocities. In an earlier work, we had shown how the spatiotemporal phase diagram indicated regions of `temporal stability' at larger values of $Re$~\cite{Chauhan2021}. In this work, we associate these regions of temporal stability with regions devoid of flow structures.
\begin{figure}[htbp]
\includegraphics[width=0.49\linewidth, height=0.3\linewidth]{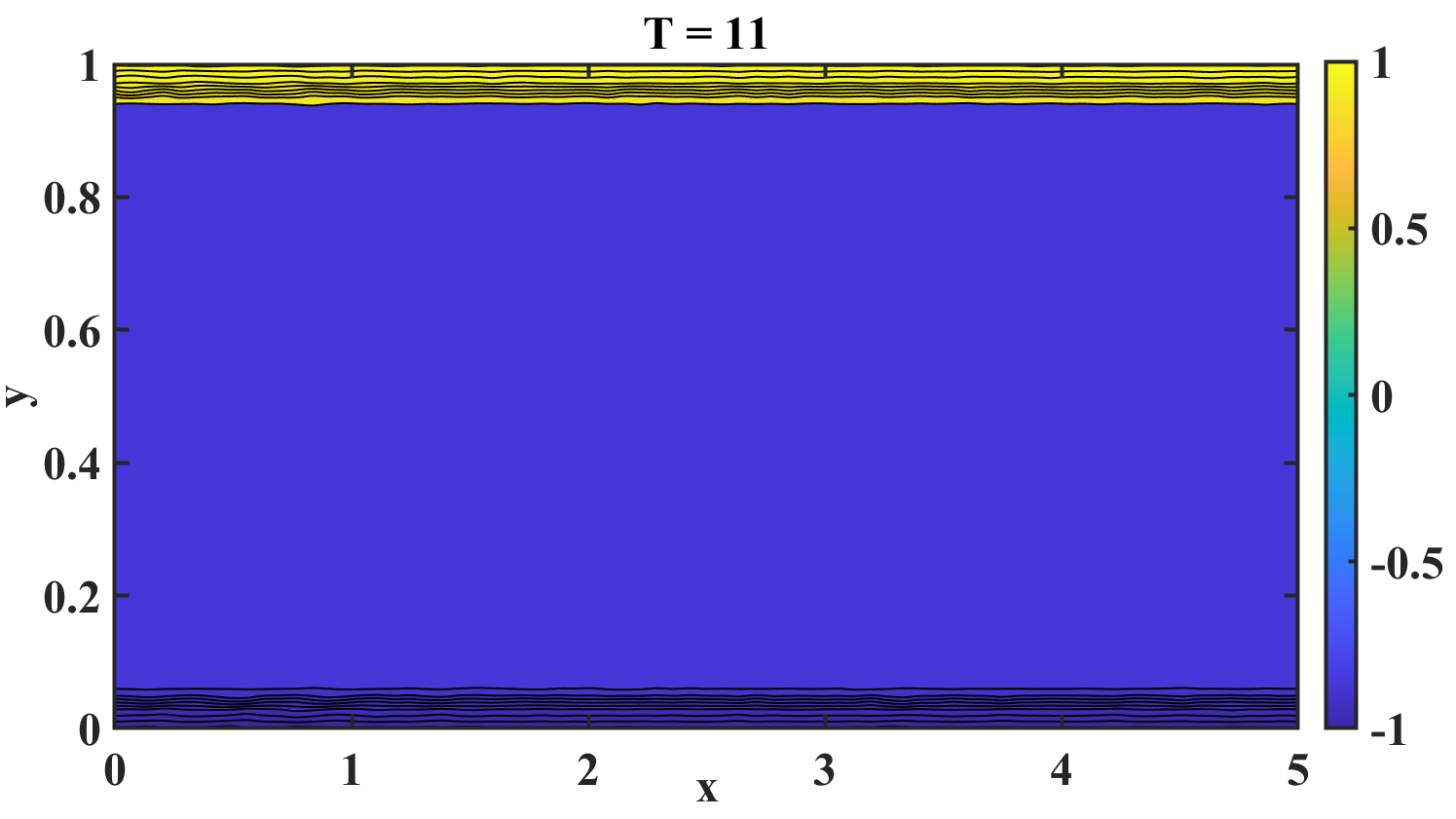}
\includegraphics[width=0.49\linewidth, height=0.3\linewidth]{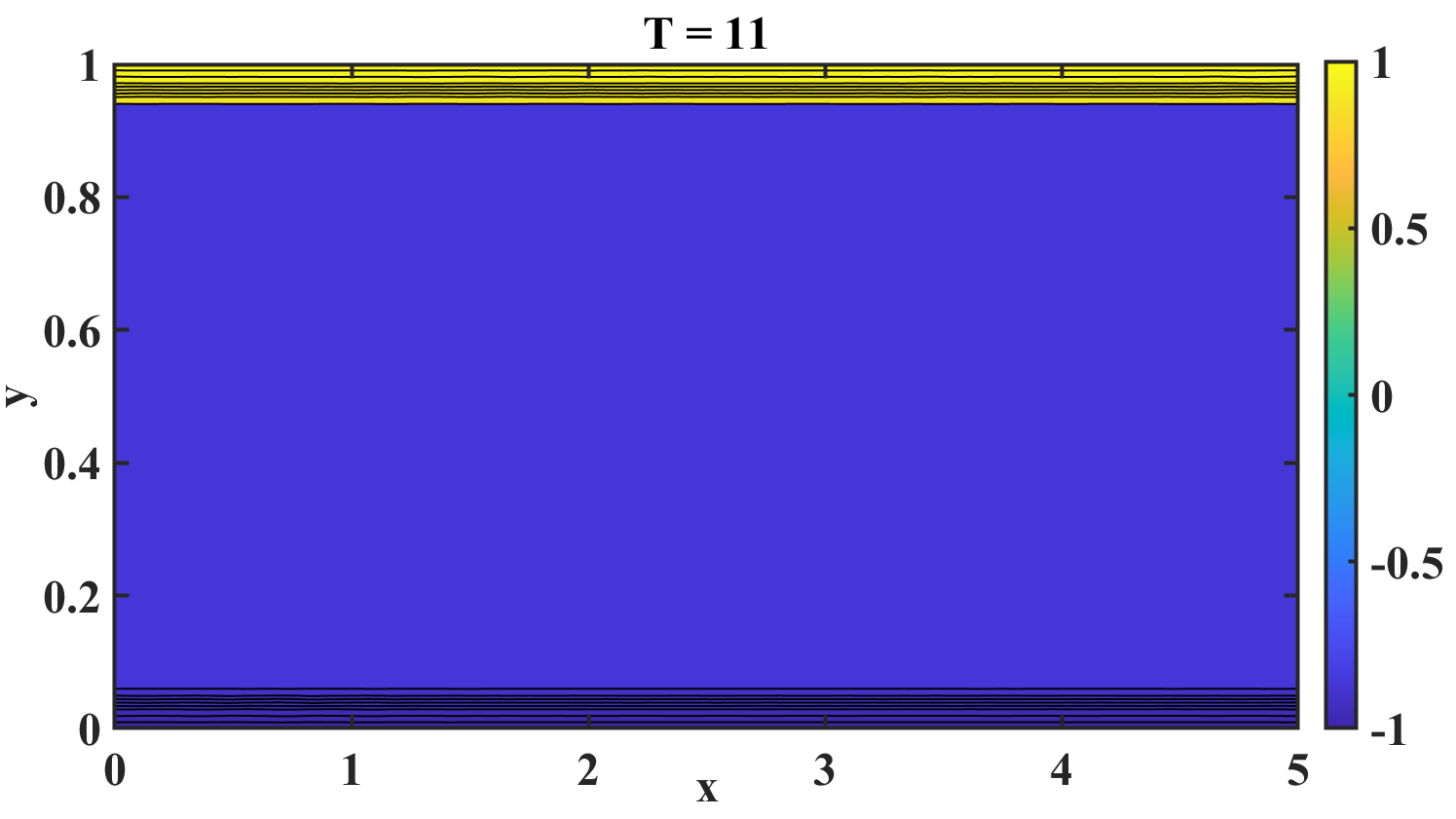}
\vskip 1pt
\includegraphics[width=0.49\linewidth, height=0.3\linewidth]{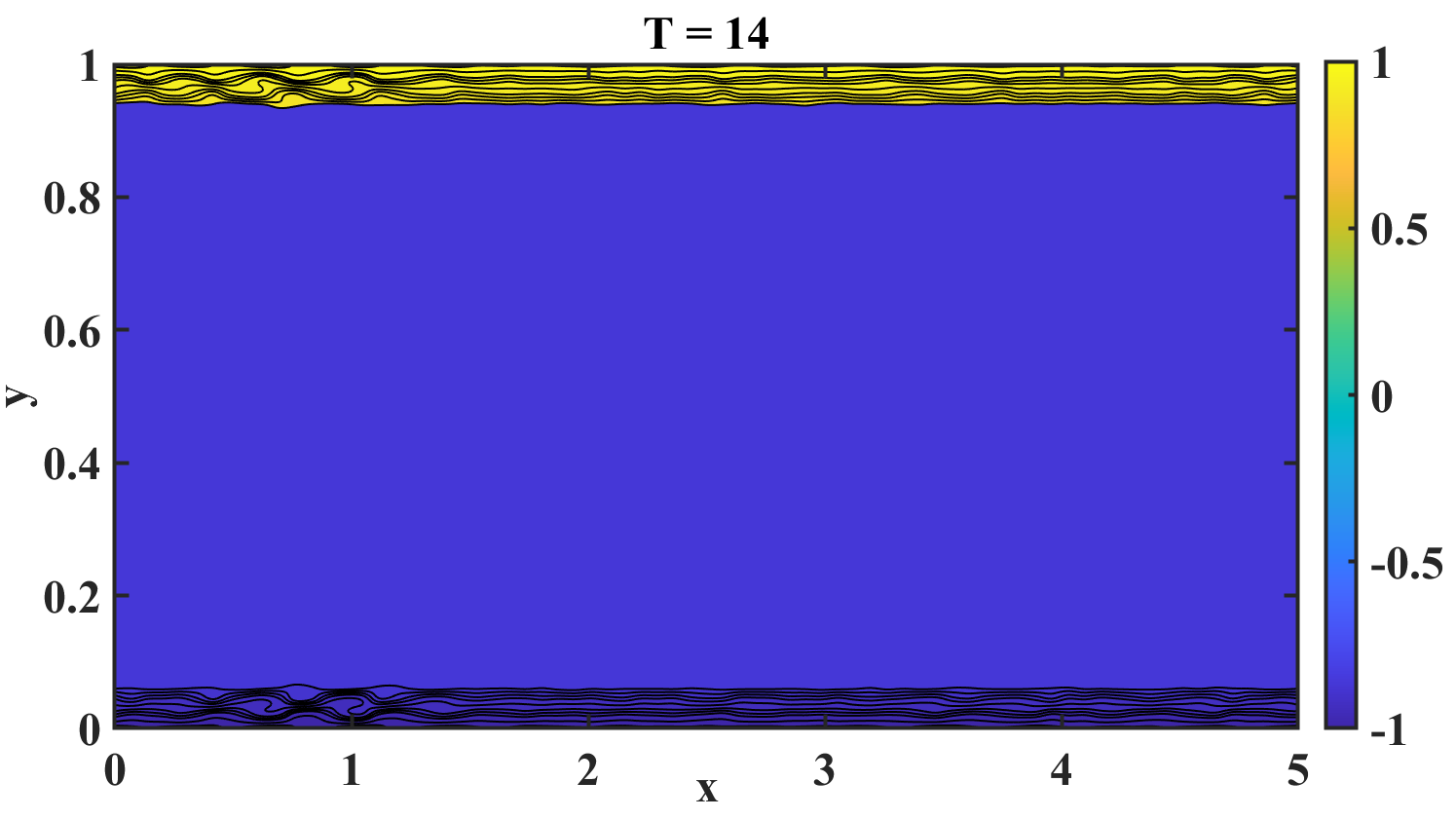}
\includegraphics[width=0.49\linewidth, height=0.3\linewidth]{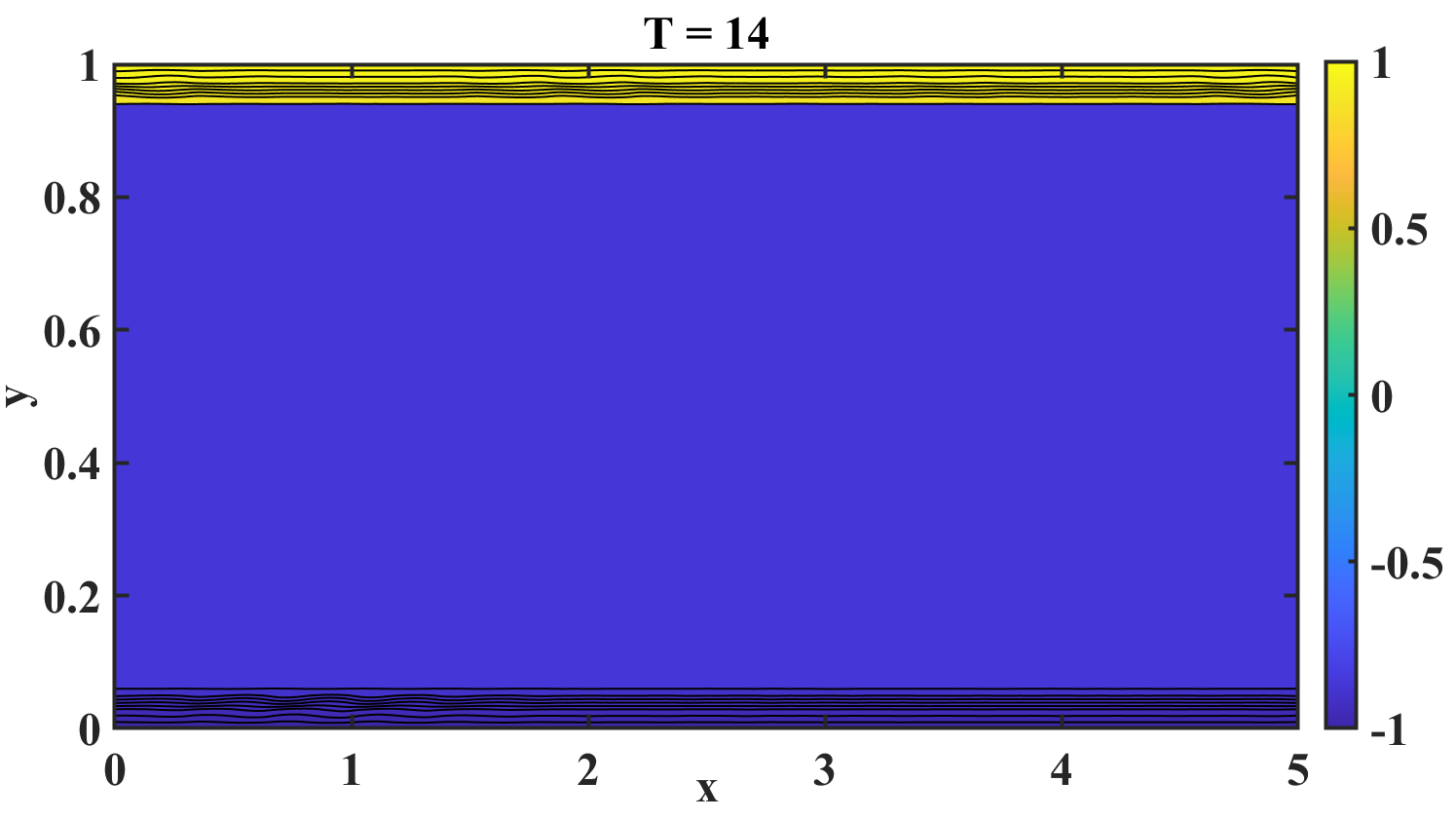}
\vskip 1pt
\includegraphics[width=0.49\linewidth, height=0.3\linewidth]{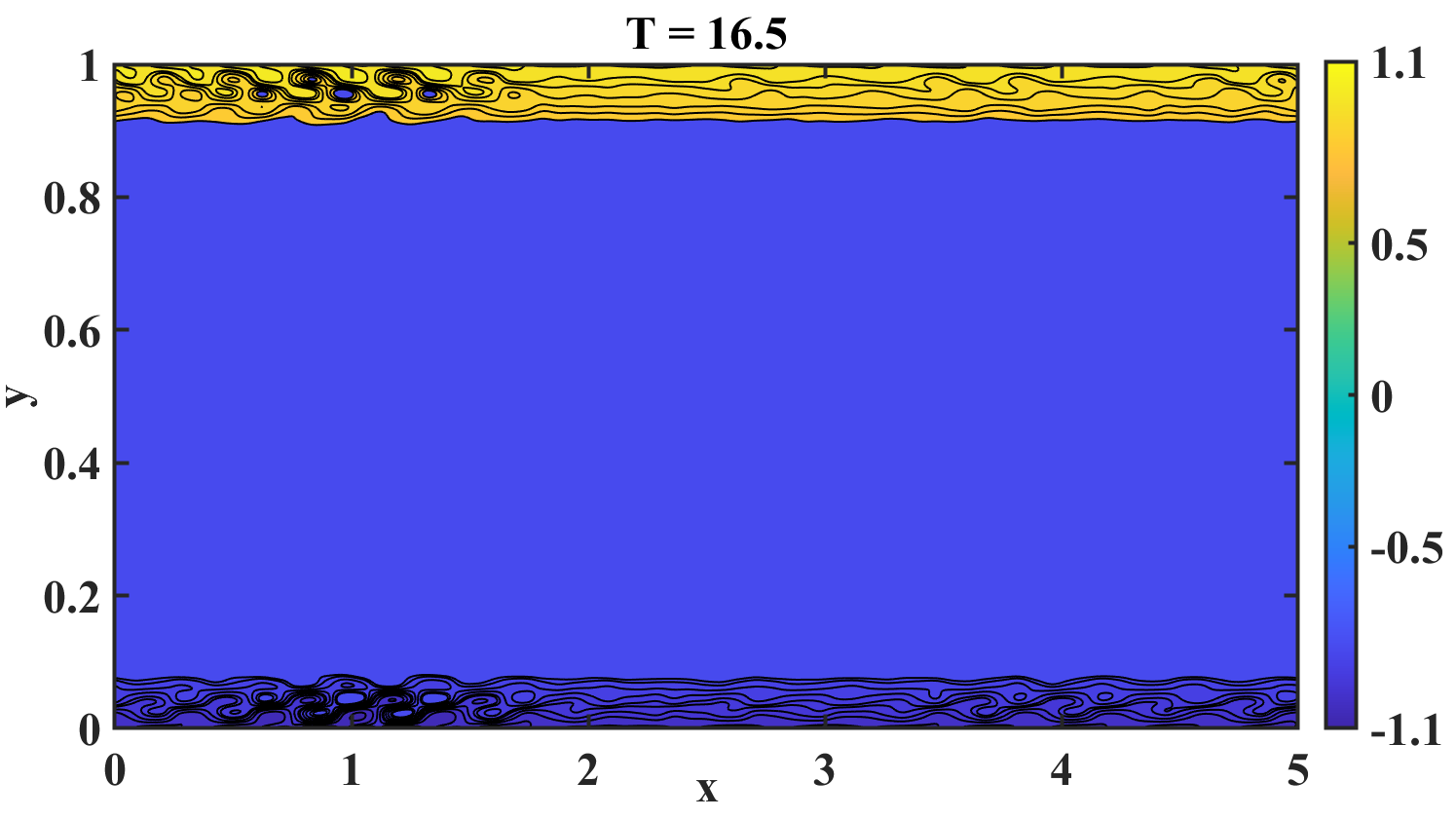}
\includegraphics[width=0.49\linewidth, height=0.3\linewidth]{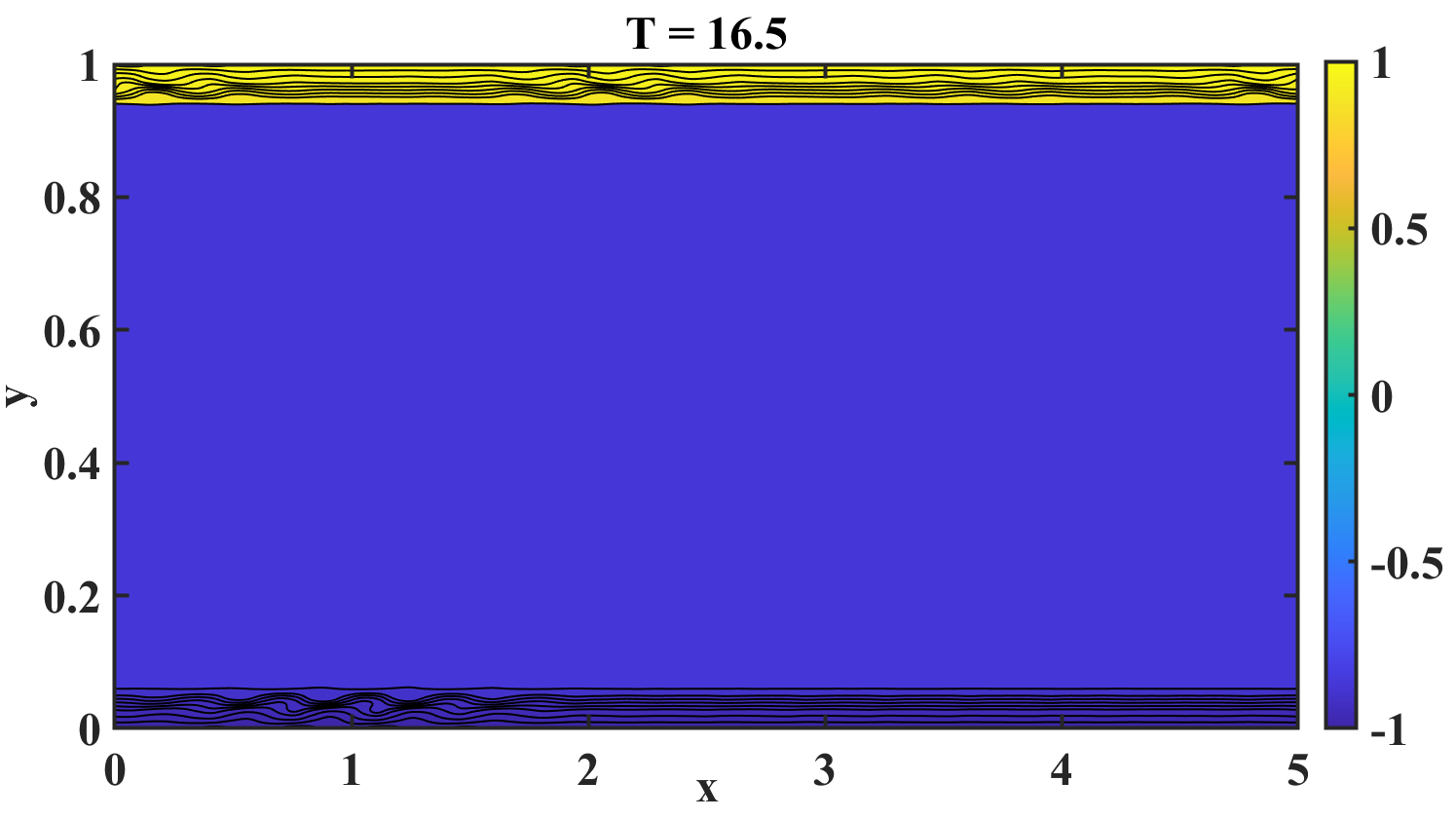}
\vskip 1pt
\includegraphics[width=0.49\linewidth, height=0.3\linewidth]{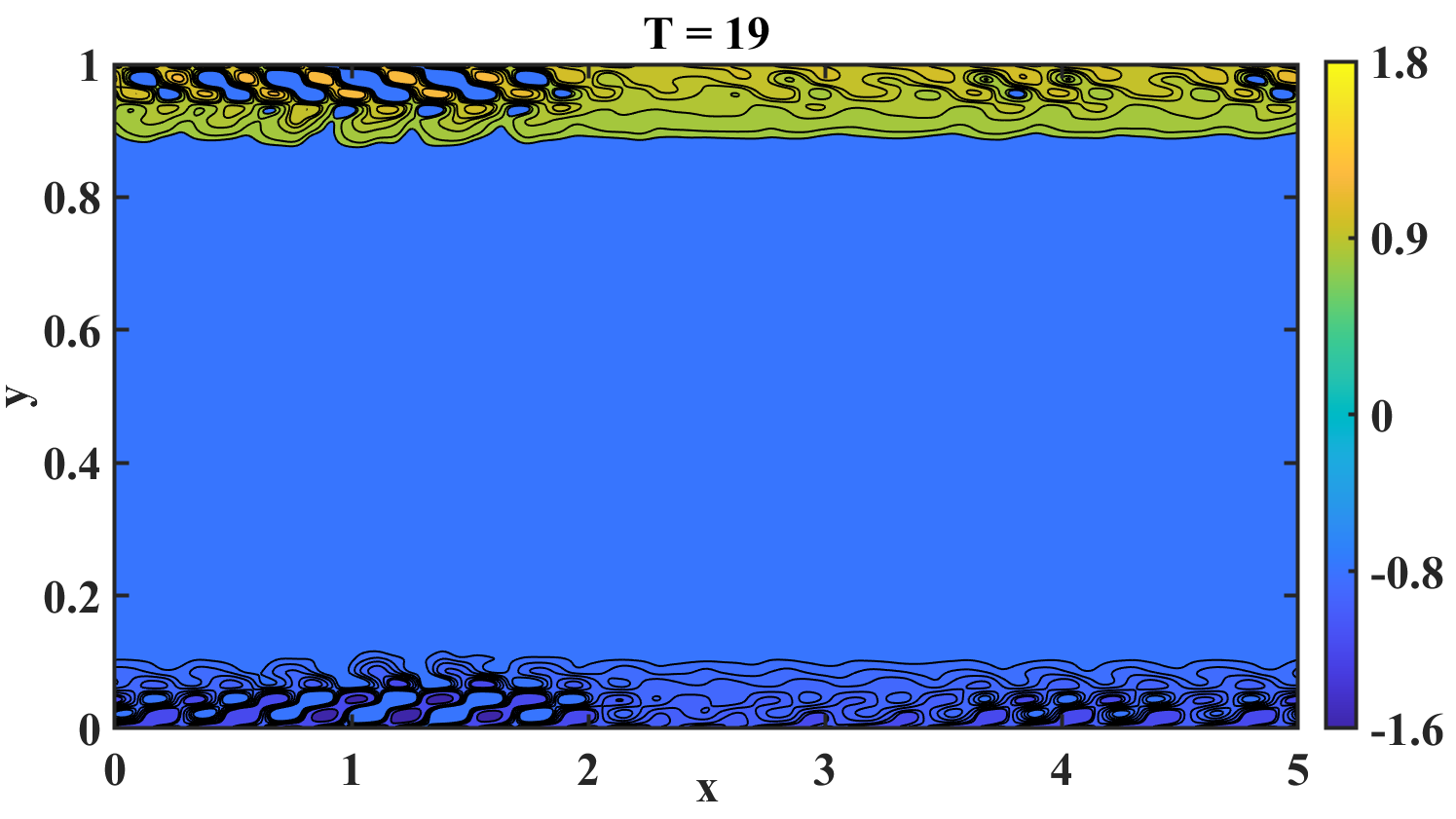}
\includegraphics[width=0.49\linewidth, height=0.3\linewidth]{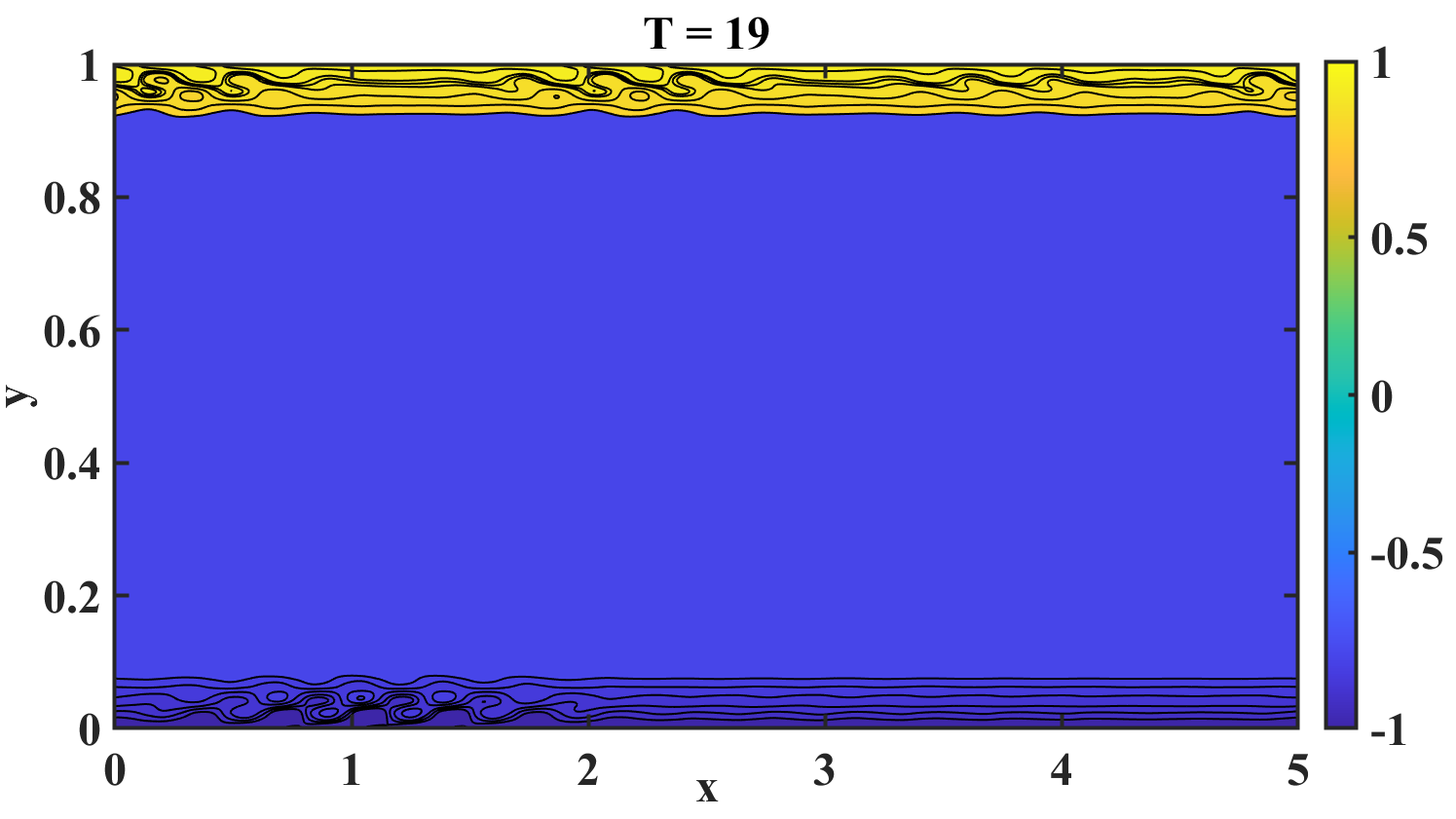}
\caption{Vorticity contours for the viscous stress-dominated ($\nu=0.6$) Zimm model ($\alpha=0.67$) case, shown at parameter values, $We=10.0$ and $Re=70$ (left column) versus $Re=1000$ (right column).}
\label{fig:Fig9}
\end{figure}

\paragraph{Rouse model:} The Rouse model represents `thicker' fluid, or fluids with slower diffusion (figures~\ref{fig:Fig6} and~\ref{fig:Fig8}). After comparing the respective range of vorticity contours at lower as well as higher values of $Re$, we find that the macrostructures are more prominent (both in size and magnitude) in the Rouse model (in comparison with the Zimm's model). Even within the Rouse model, we find that the elastic stress dominated case ($\nu=0.3$, figure~\ref{fig:Fig6}) exhibits formation of larger structures, than the viscous stress dominated case (figure~\ref{fig:Fig8}). Again, we note that the structure formation is conspicuously absent at larger values of $Re$ (i.~e., notice the plots on the right hand side columns in figures~\ref{fig:Fig6} and~\ref{fig:Fig8}).
\begin{figure}[htbp]
\includegraphics[width=0.49\linewidth, height=0.3\linewidth]{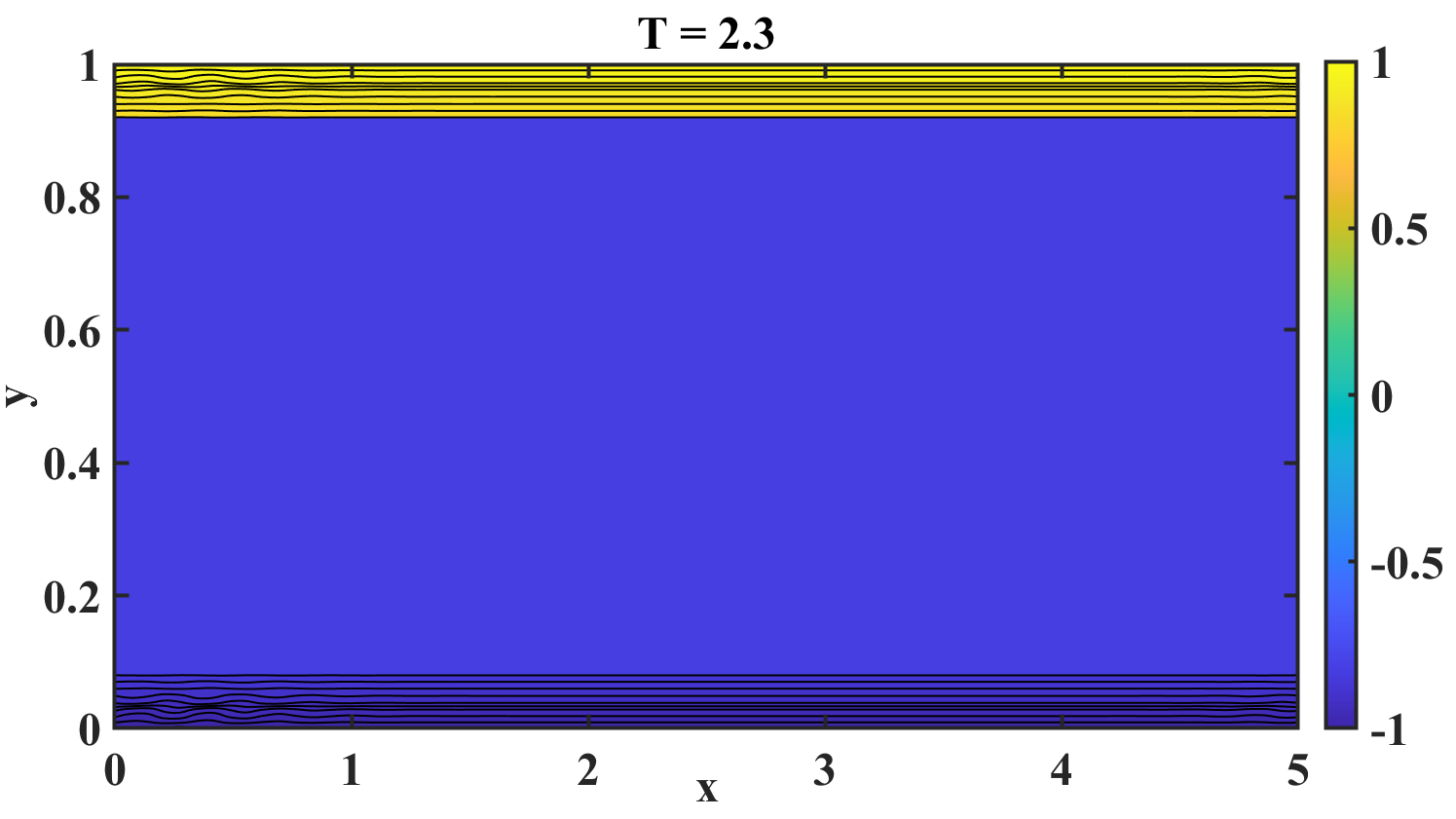}
\includegraphics[width=0.49\linewidth, height=0.3\linewidth]{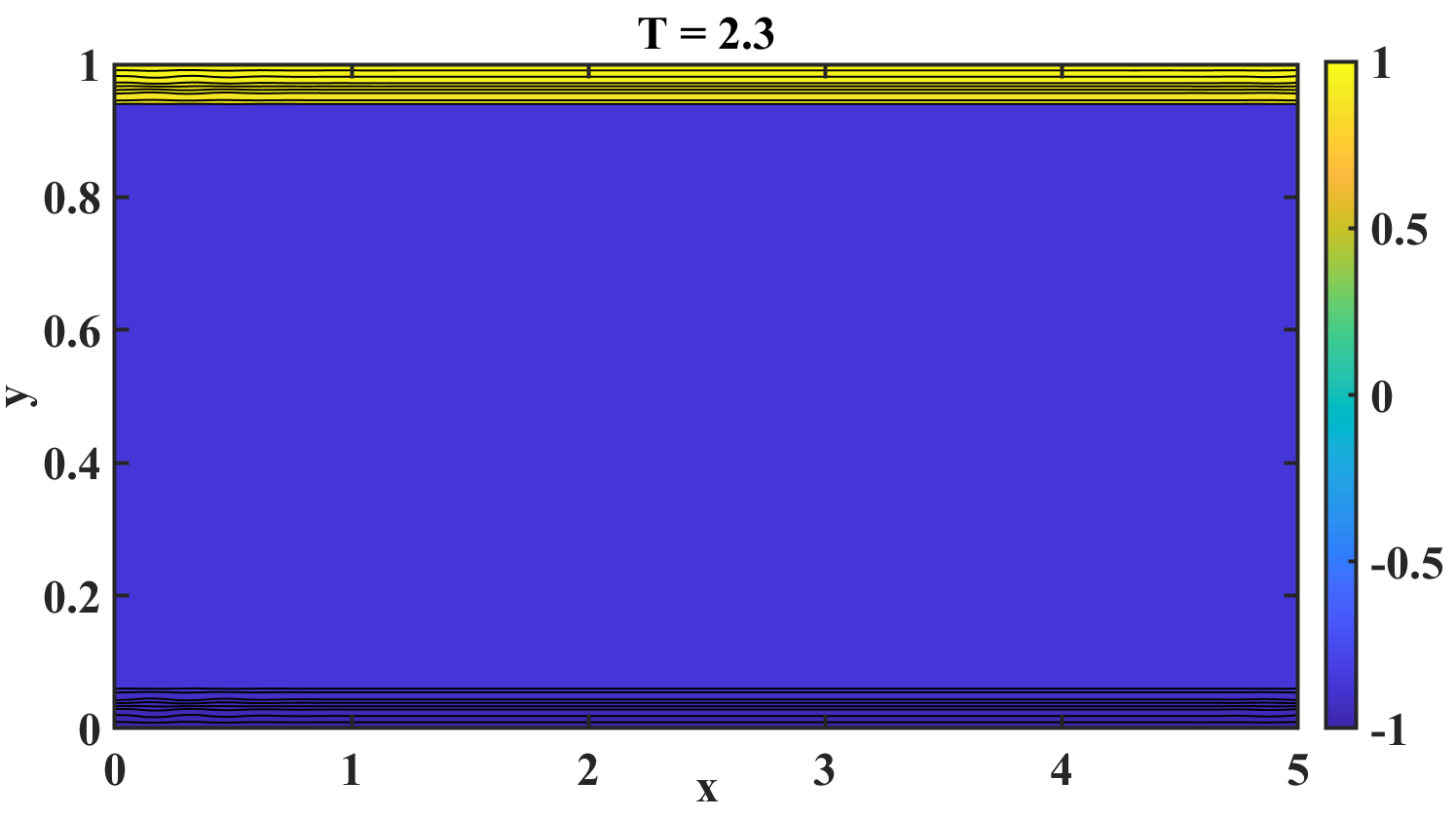}
\vskip 1pt
\includegraphics[width=0.49\linewidth, height=0.3\linewidth]{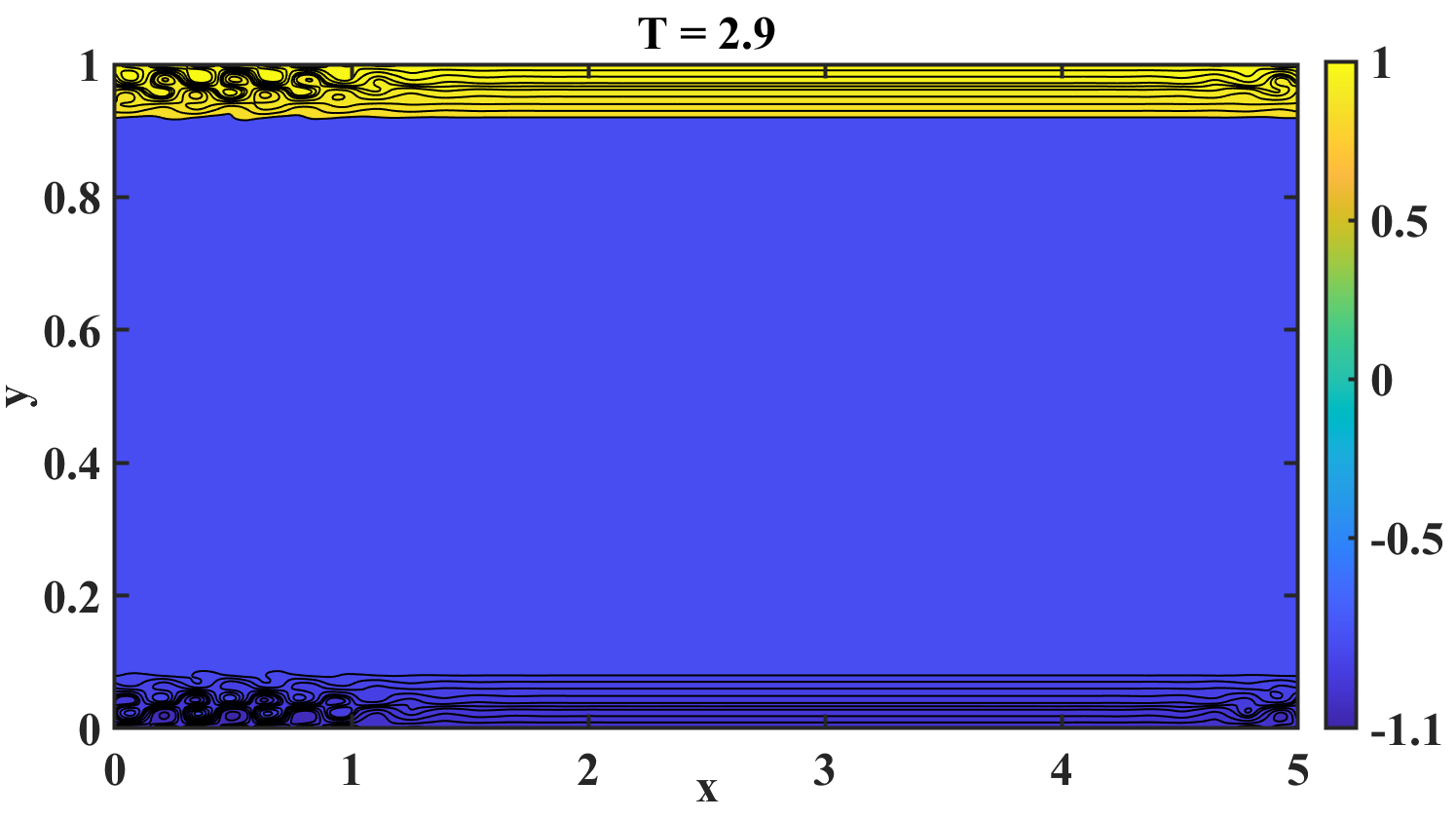}
\includegraphics[width=0.49\linewidth, height=0.3\linewidth]{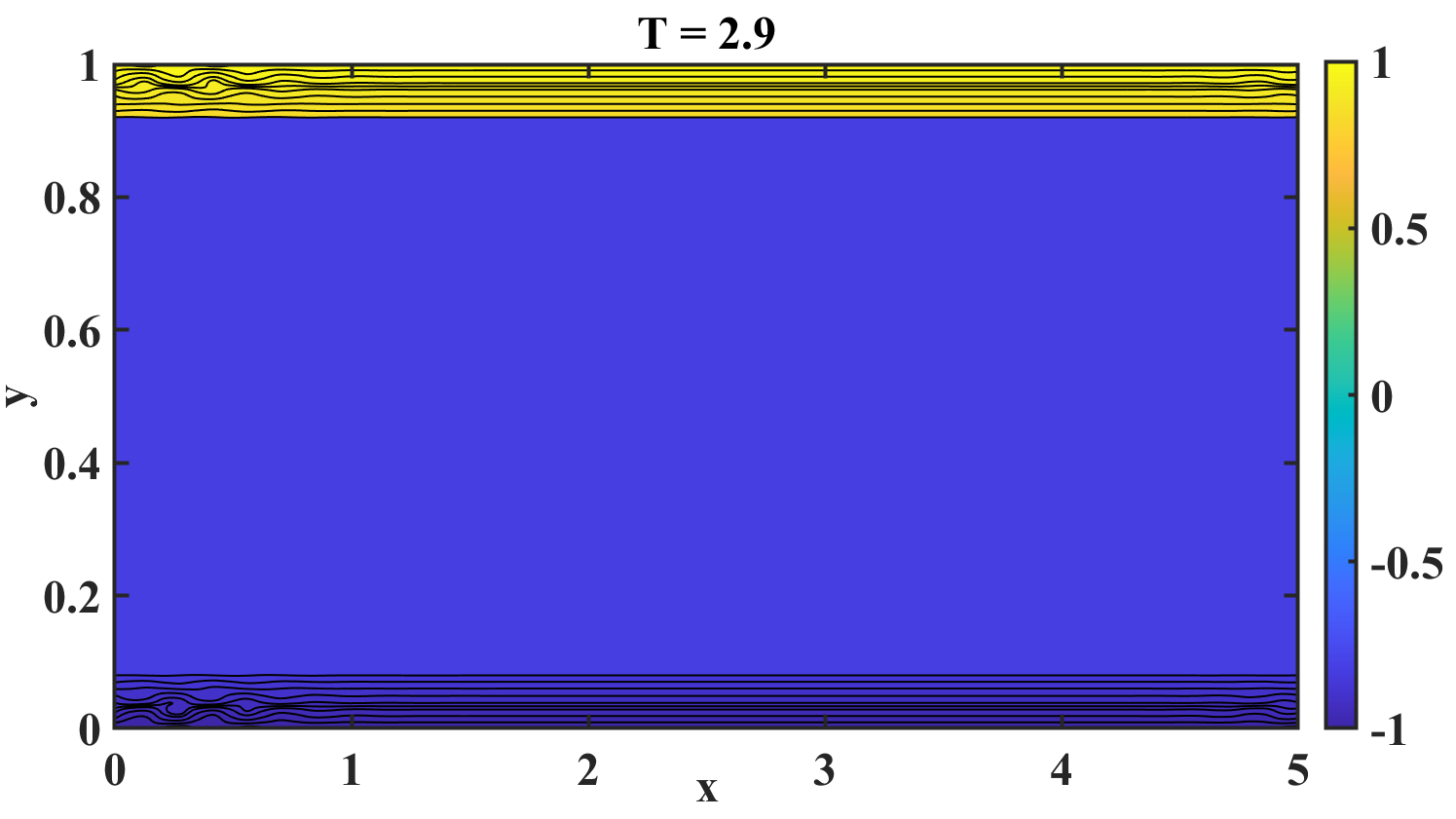}
\vskip 1pt
\includegraphics[width=0.49\linewidth, height=0.3\linewidth]{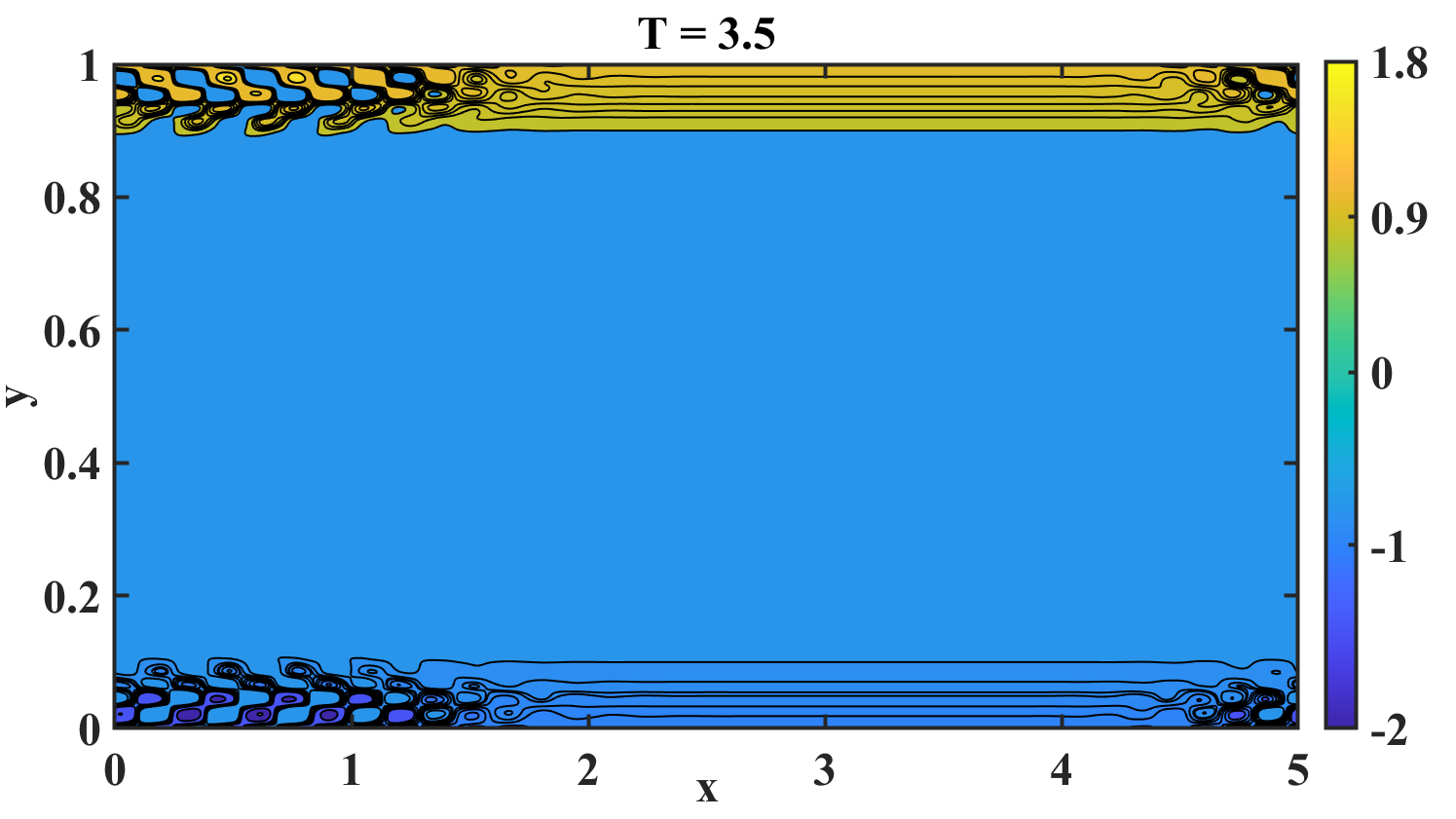}
\includegraphics[width=0.49\linewidth, height=0.3\linewidth]{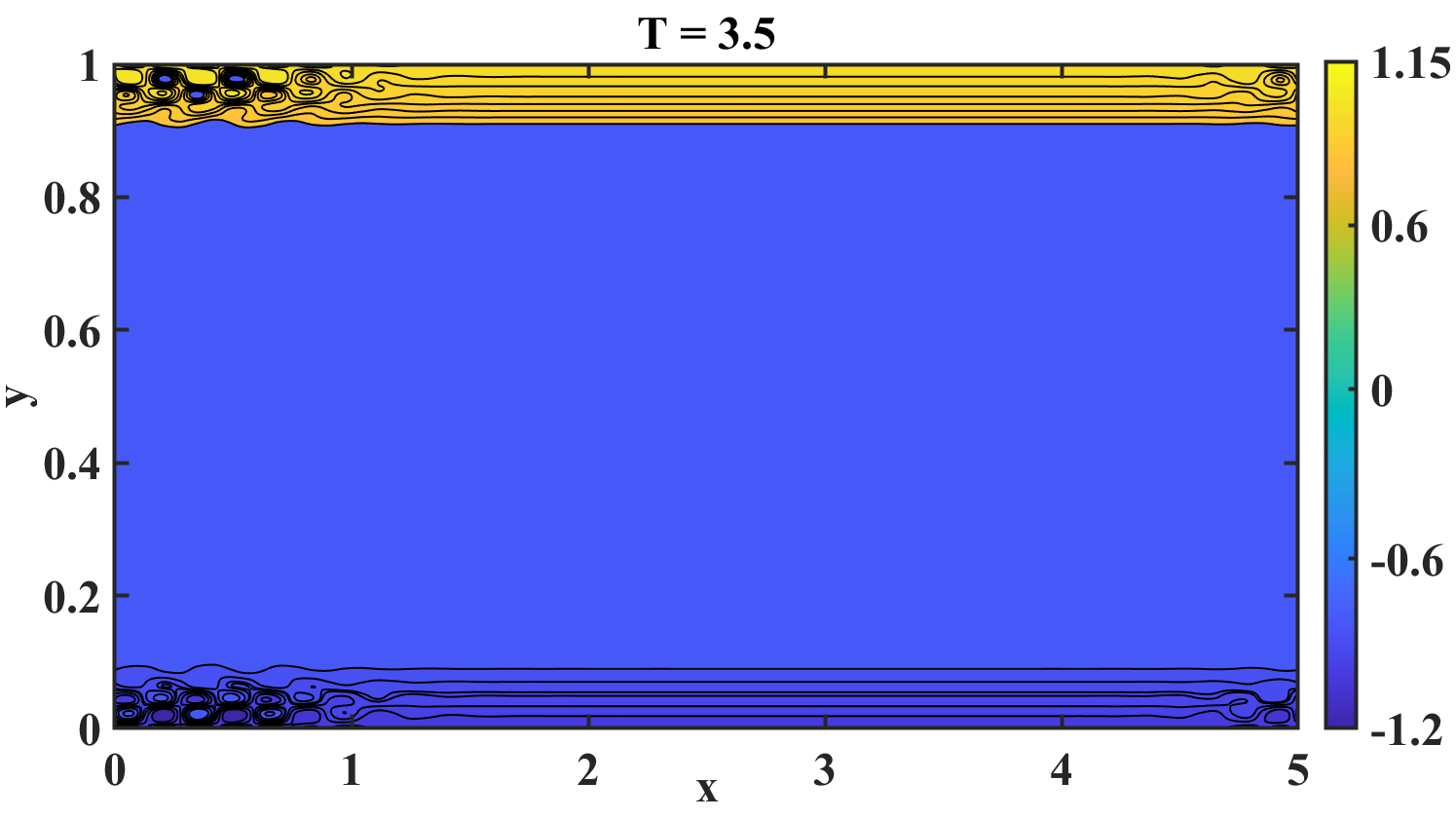}
\vskip 1pt
\includegraphics[width=0.49\linewidth, height=0.3\linewidth]{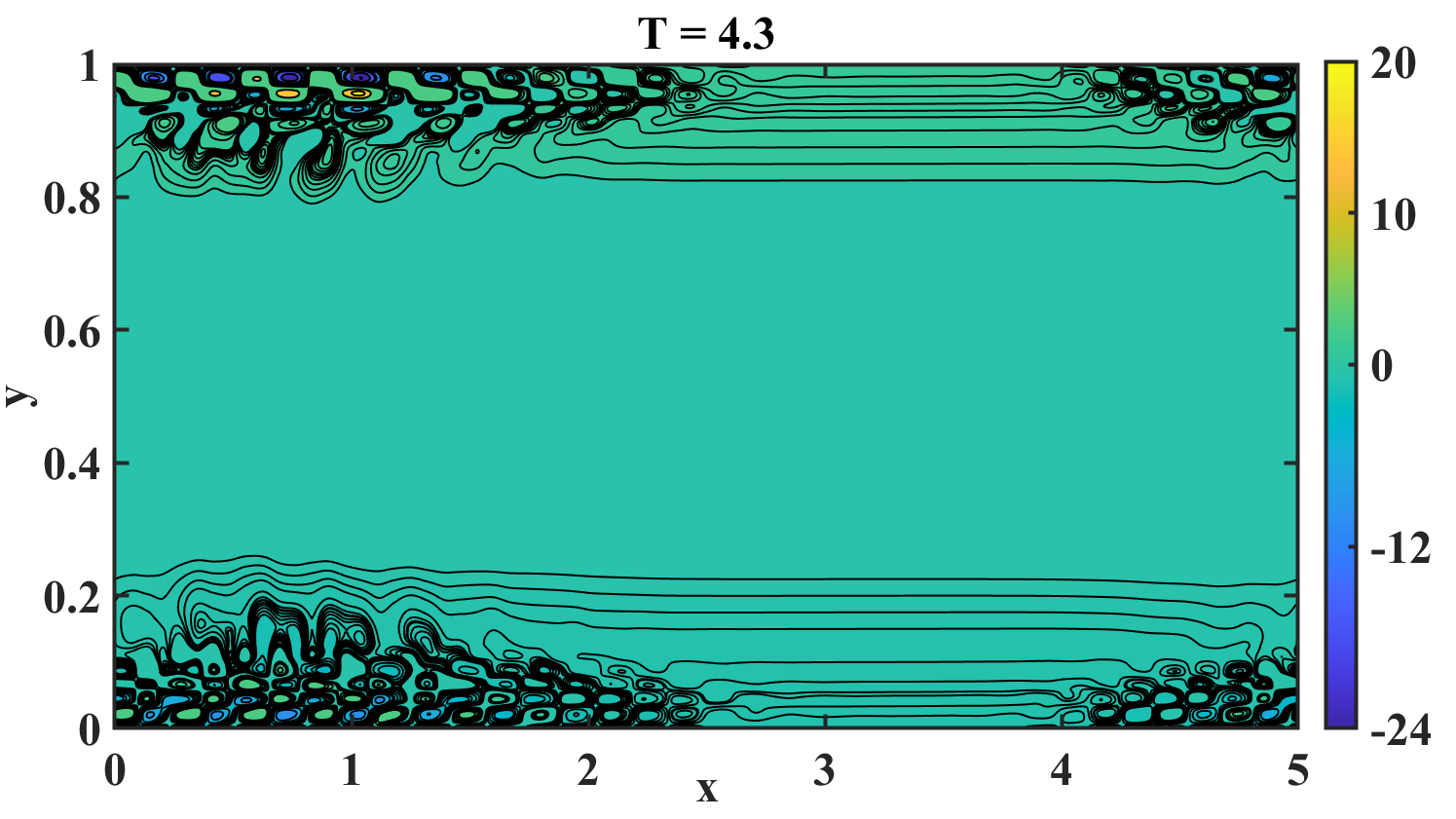}
\includegraphics[width=0.49\linewidth, height=0.3\linewidth]{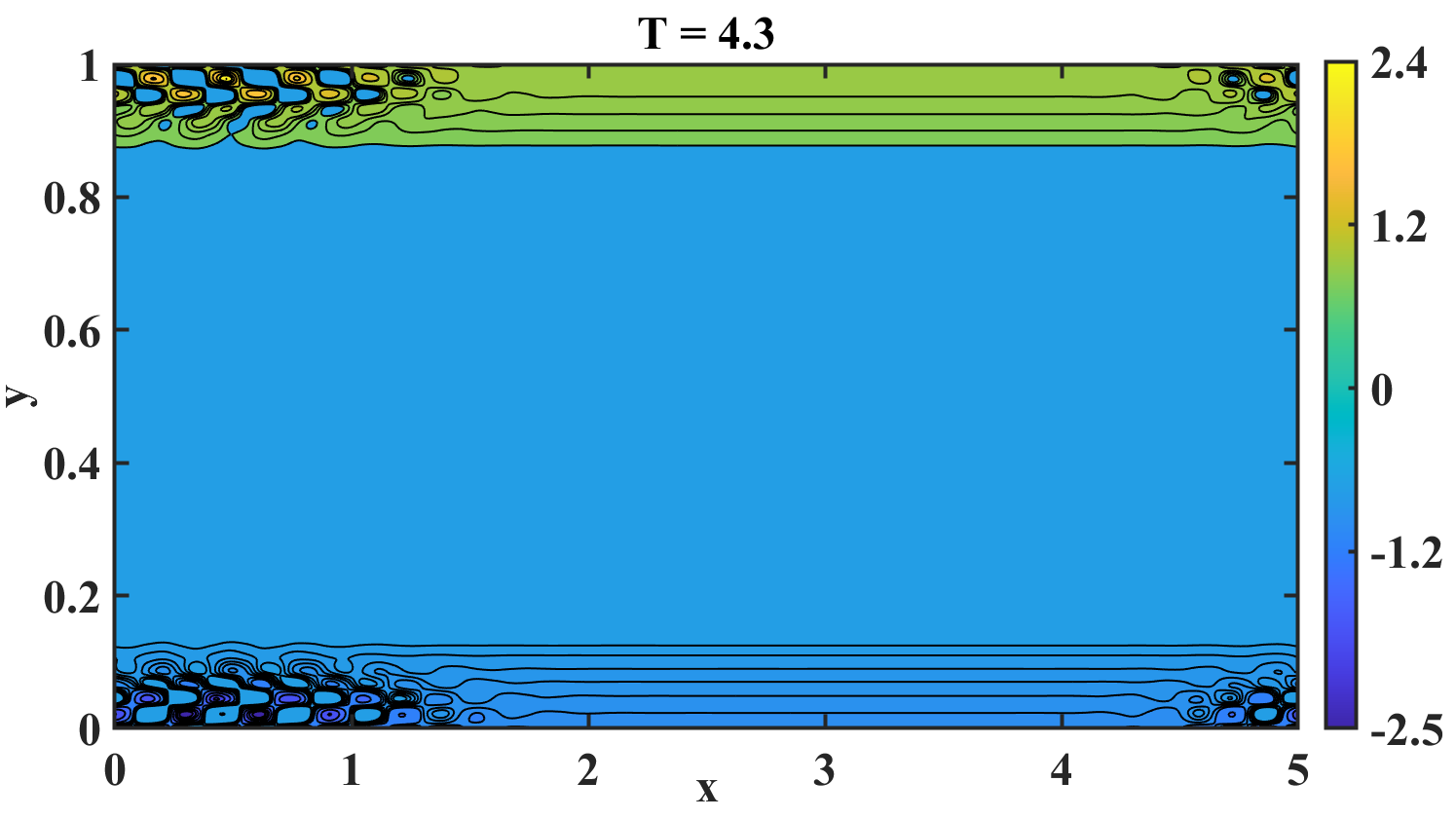}
\caption{Vorticity contours for the elastic stress-dominated ($\nu=0.3$) Rouse model ($\alpha=0.5$) case, shown at parameter values, $We=10.0$ and $Re=70$ (left column) versus $Re=1000$ (right column).}
\label{fig:Fig6}
\end{figure}

We summarize our discussion by noting that for the selected values of the parameters, $\nu, \alpha$ and $We$: (a) The Rouse model is comparatively more unstable than the Zimm's model at low fluid inertia (or low values of $Re$), (b) The elastic stress dominated case ($\nu<0.5$ case) is comparatively more unstable than the viscous stress dominated case, and (c) Temporal stability is achieved at higher values of $Re$, irrespective of the model or the polymer concentration.

A notably `abnormal' feature in our numerical simulations is the presence of temporal stability at high inertia. While the in silico studies of the classical Oldroyd-B channel flows indicate the appearance of temporal instability for Reynolds number as low as $Re \sim 50$~\cite{Khalid2021}, temporal stability at high fluid inertia for viscoelastic flows is only recognized in experimental realizations (until now). For example, Riley~\cite{Riley1988} reported an elasticity induced flow stabilization of viscoelastic fluids coated over complaint surfaces at {\color{black}a} fairly high Reynolds number ($Re \sim 4000$). In a separate study involving ethanol gel fuels, elastic stabilization at {\color{black}a} high shear rate was attributed due to an abnormally high second normal stress difference~\cite{Nandagopalan2018}. Viscoelastic flow stabilization at higher values of $Re$, in tapered microchannels, was explained due to the presence of wall effects~\cite{Zarabadi2019}. In another in vitro study, a biofilm deacidification created a non-homogeneous environment for molecular diffusion, leading to a `subdiffusive effect' with hindered flow rates~\cite{Zarabadi2018}. These in vitro studies not only corroborate our numerical outcome, especially establishing the emergence of temporally stable region at high inertia, but also highlight the potential of FPDE in effectively capturing the flow-instability transition in subdiffusive flows.
\begin{figure}[htbp]
\includegraphics[width=0.49\linewidth, height=0.3\linewidth]{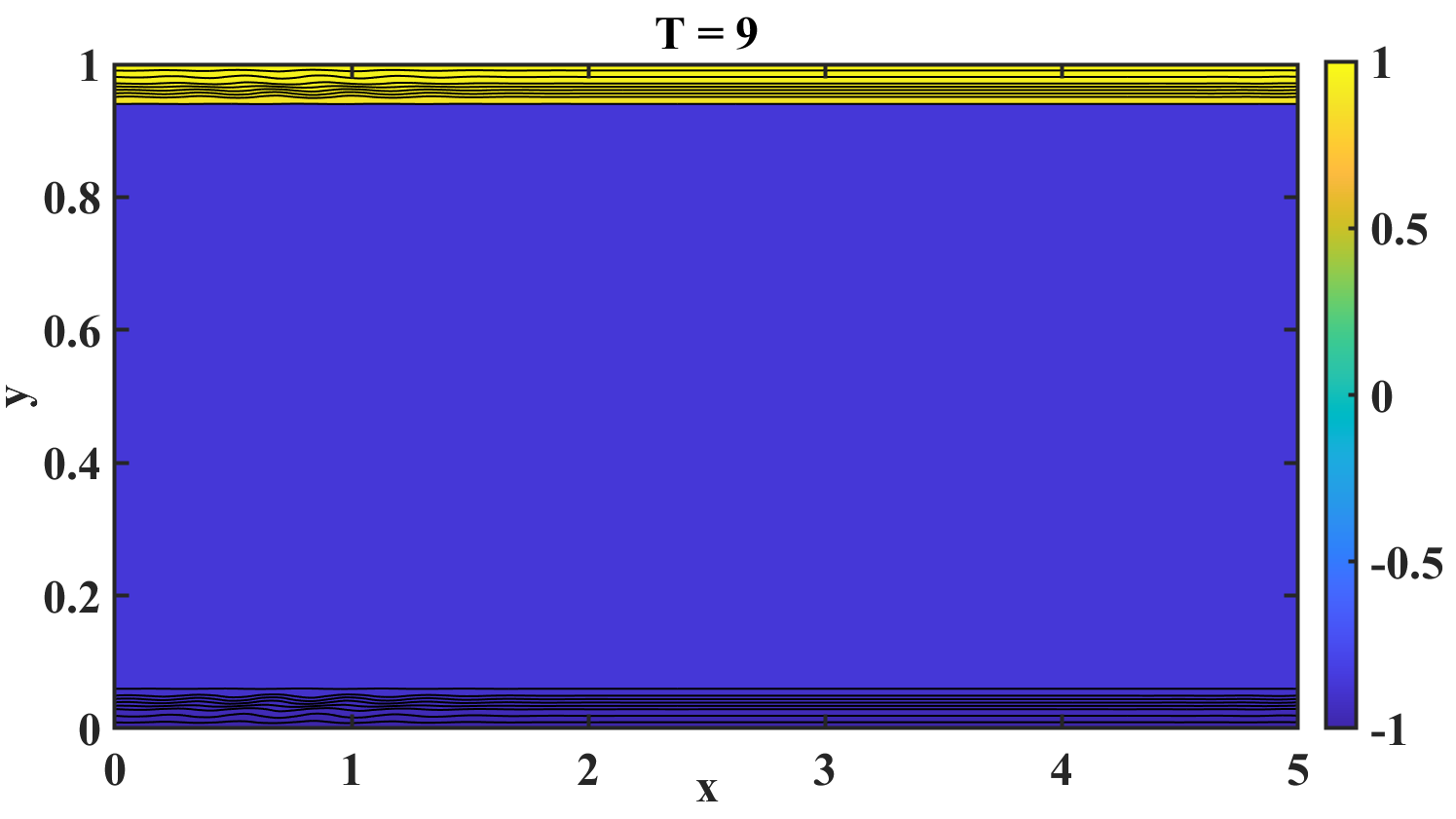}
\includegraphics[width=0.49\linewidth, height=0.3\linewidth]{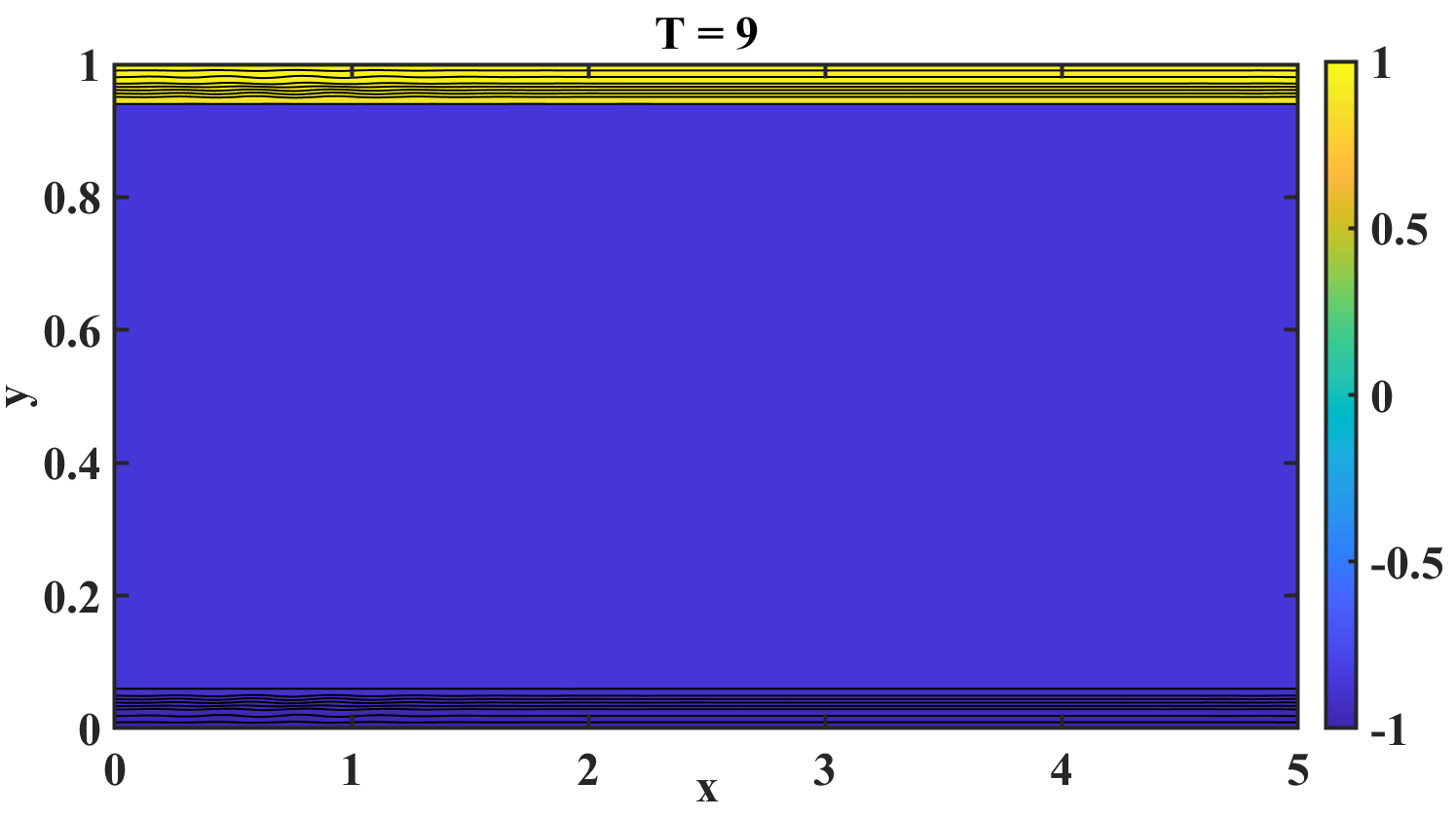}
\vskip 1pt
\includegraphics[width=0.49\linewidth, height=0.3\linewidth]{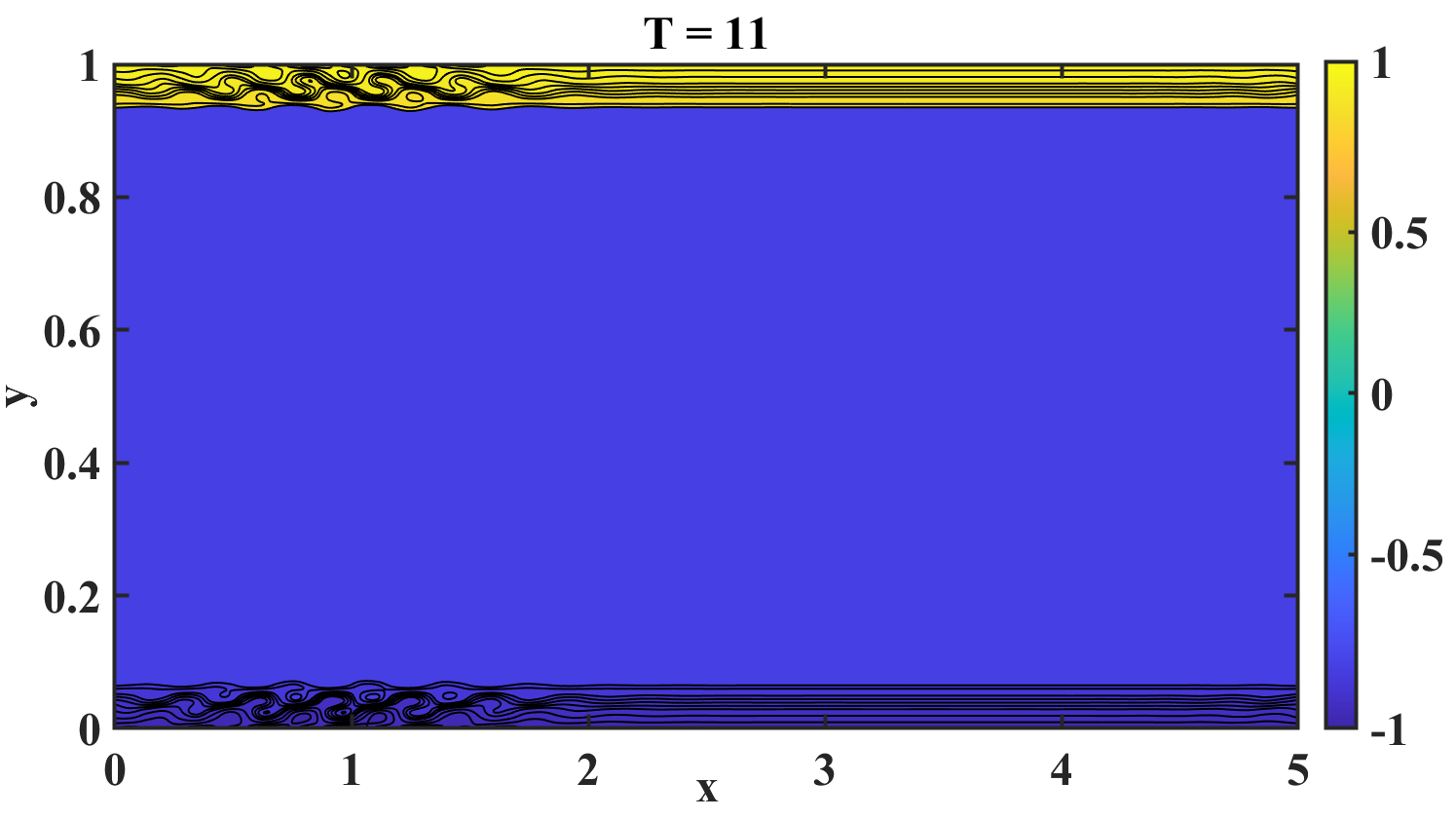}
\includegraphics[width=0.49\linewidth, height=0.3\linewidth]{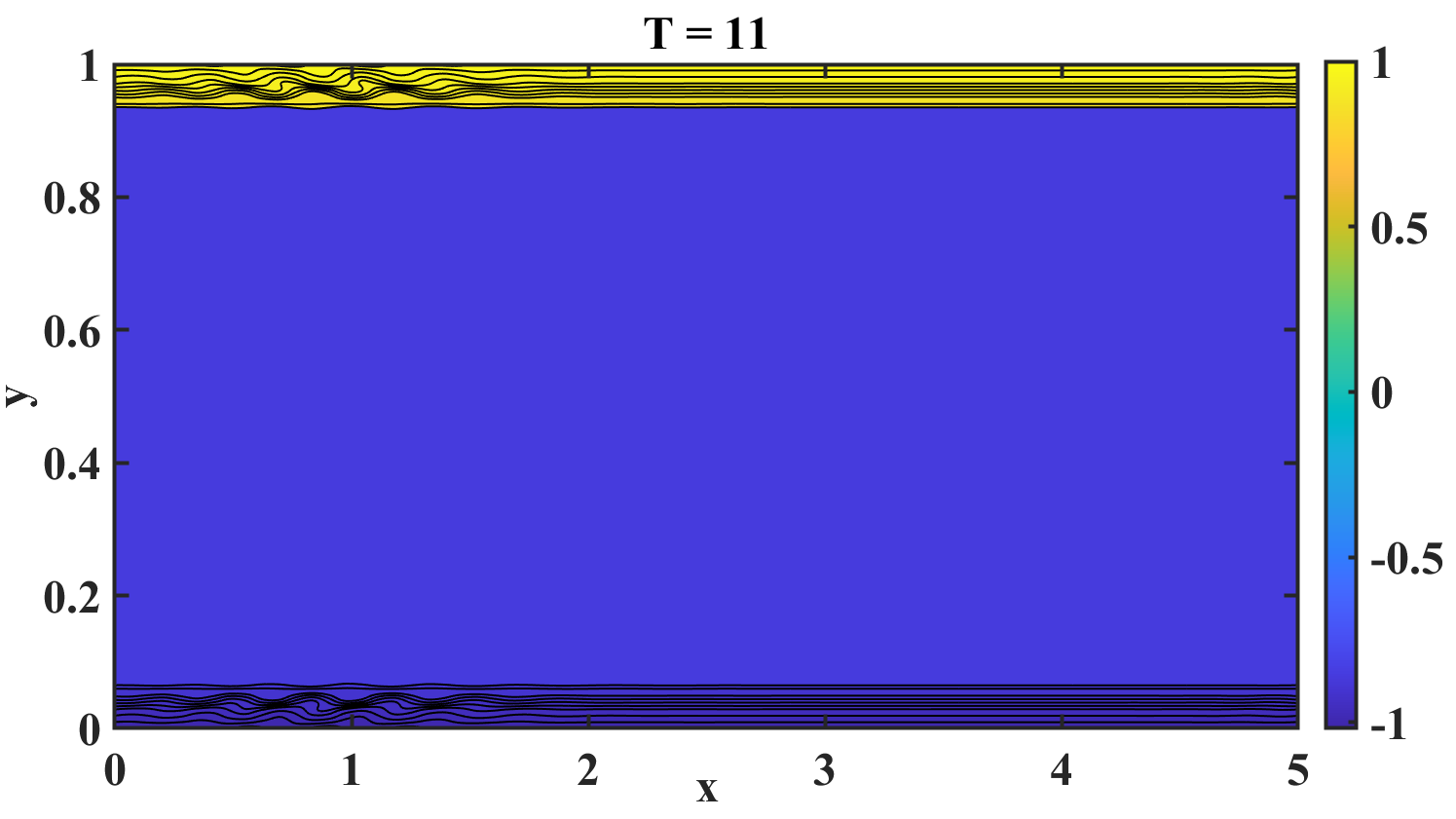}
\vskip 1pt
\includegraphics[width=0.49\linewidth, height=0.3\linewidth]{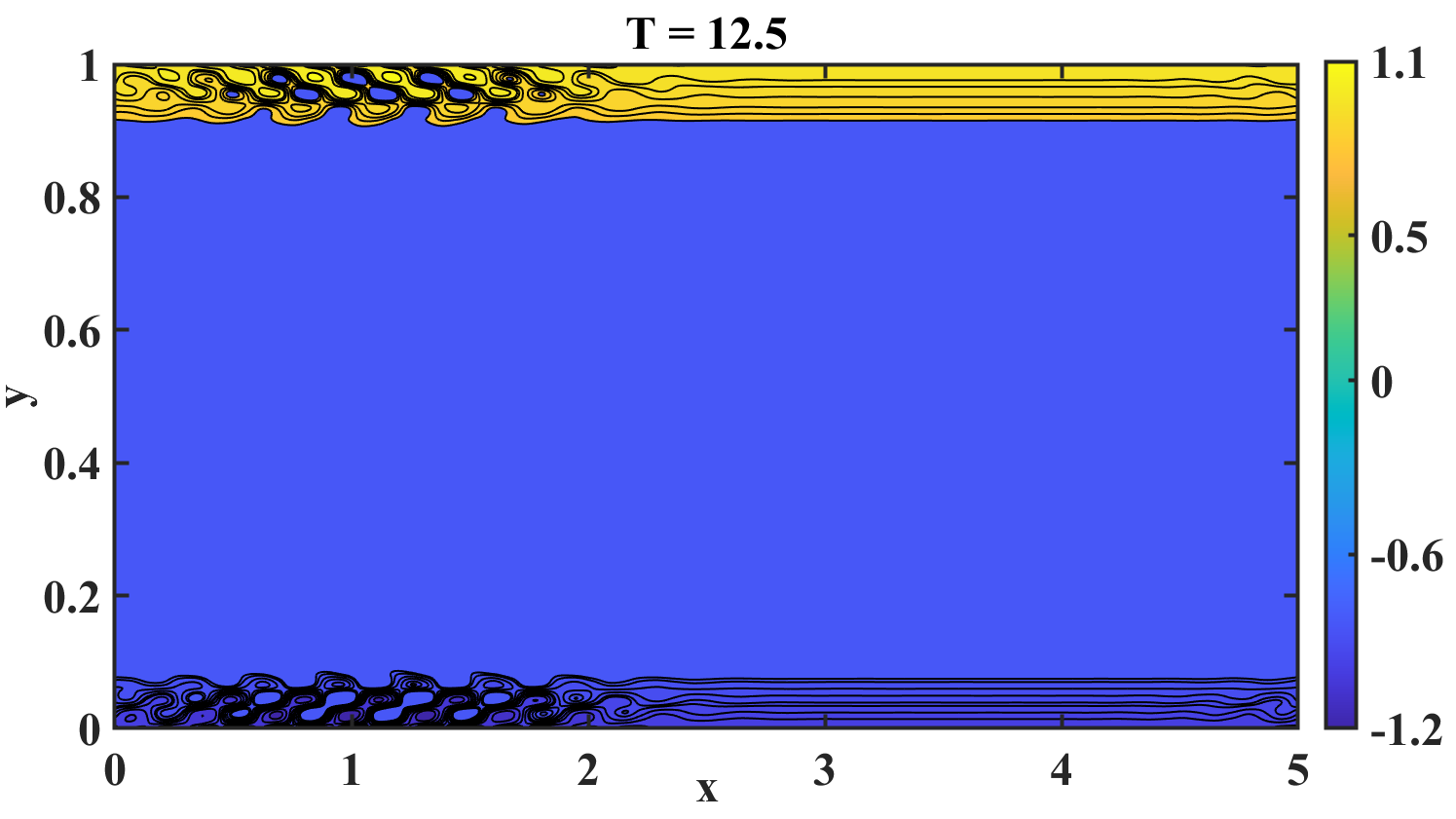}
\includegraphics[width=0.49\linewidth, height=0.3\linewidth]{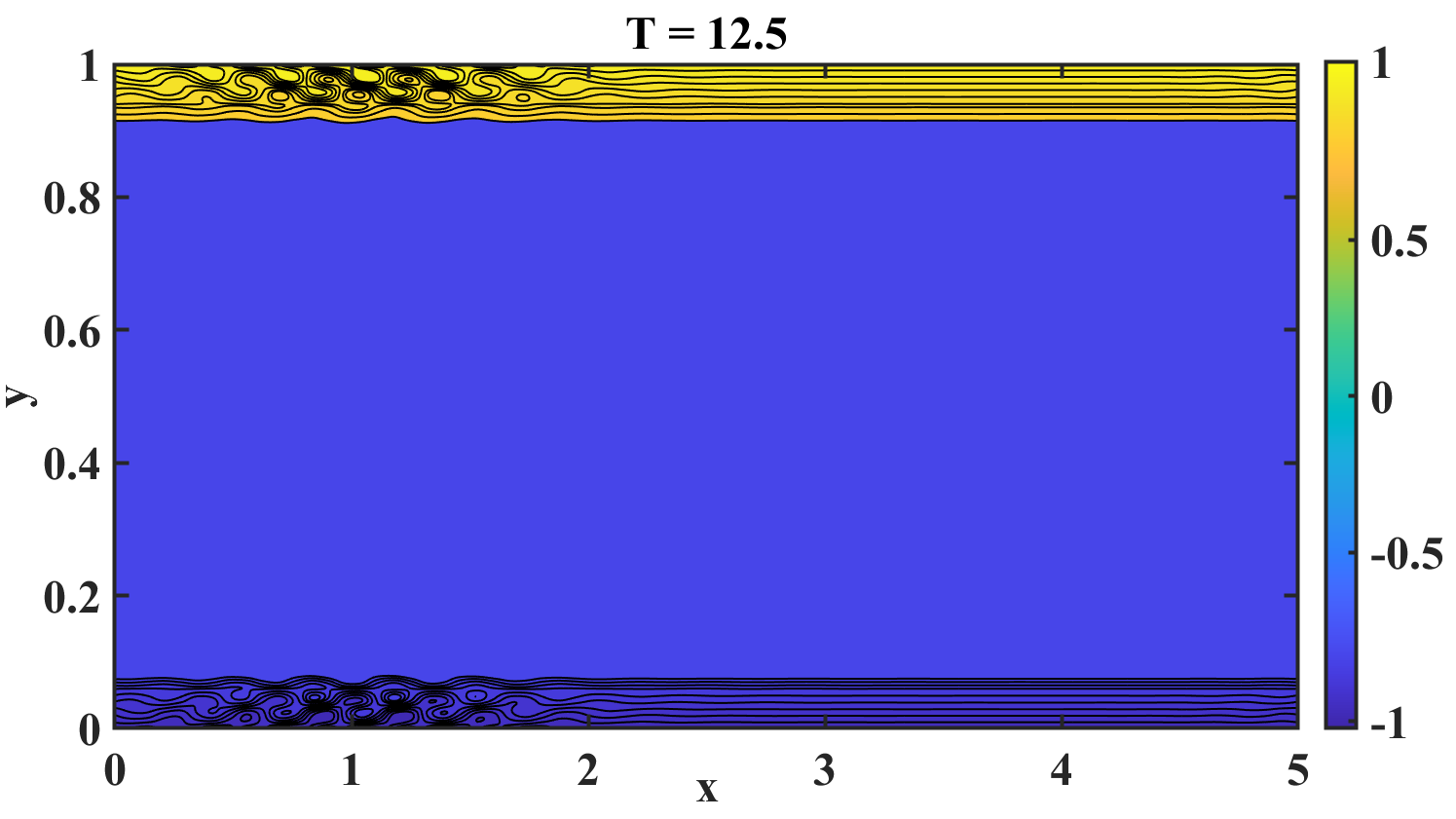}
\vskip 1pt
\includegraphics[width=0.49\linewidth, height=0.3\linewidth]{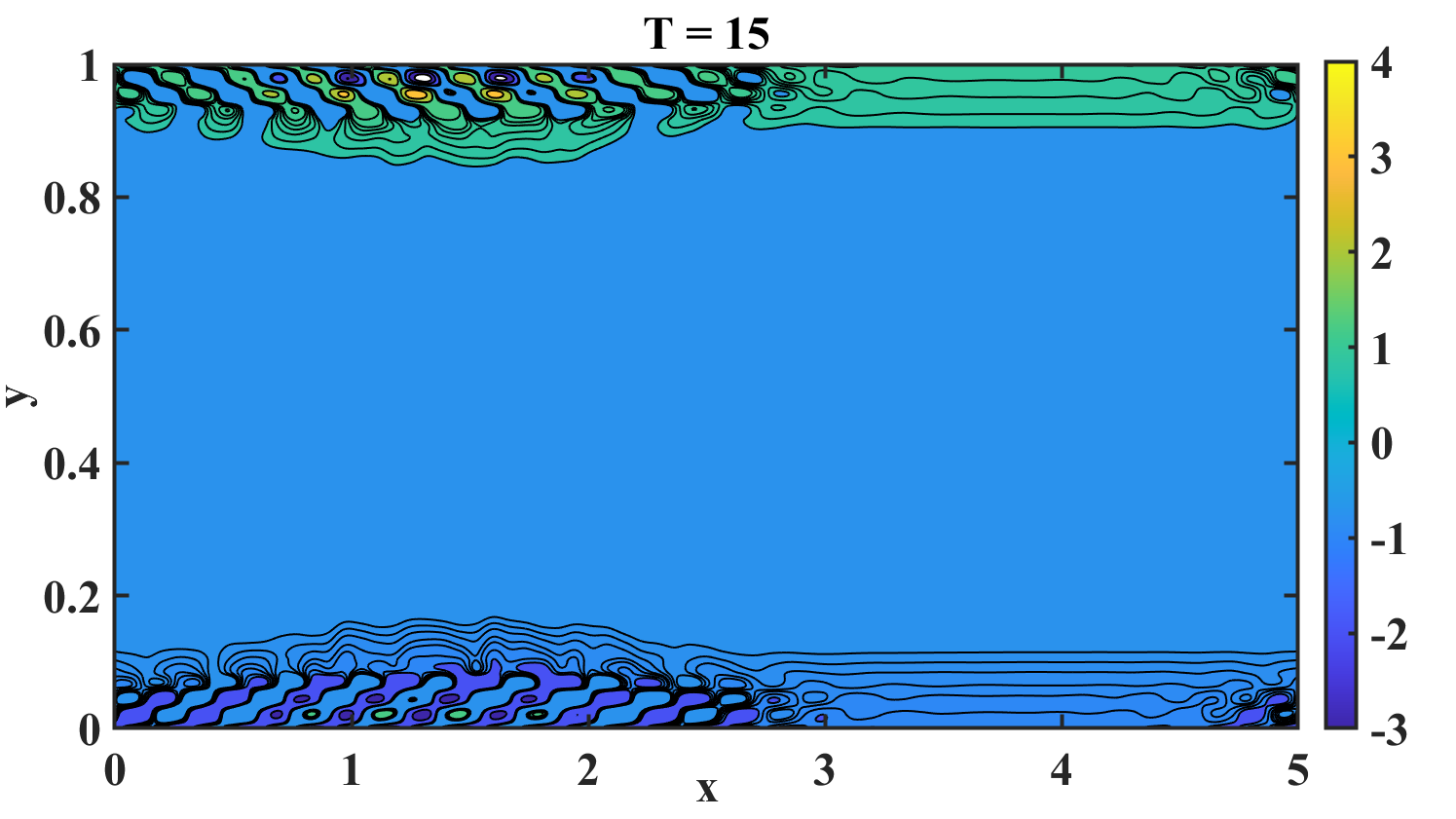}
\includegraphics[width=0.49\linewidth, height=0.3\linewidth]{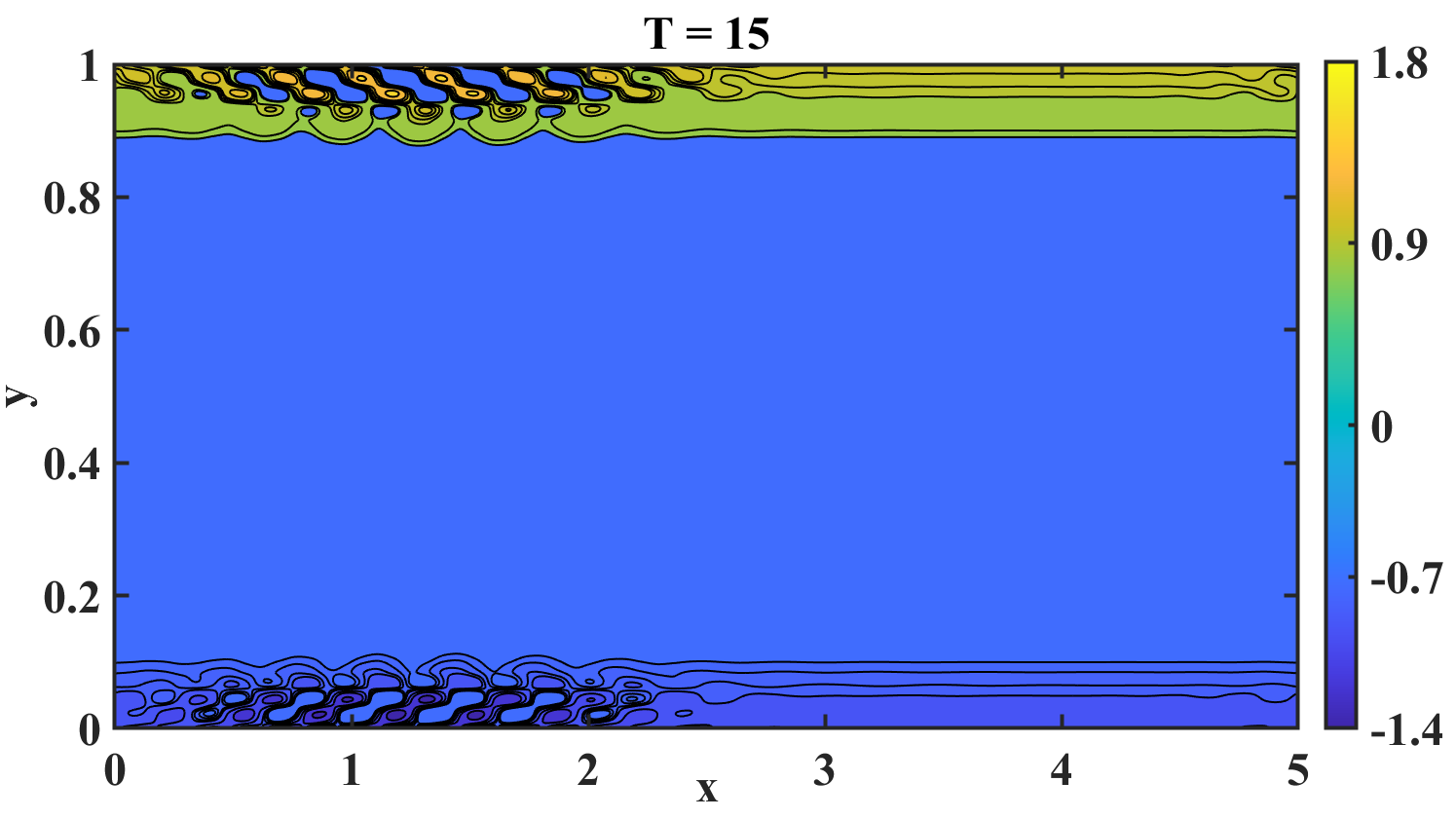}
\caption{Vorticity contours for the viscous stress-dominated ($\nu=0.6$) Rouse model ($\alpha=0.5$) case, shown at parameter values, $We=10.0$ and $Re=70$ (left column) versus $Re=1000$ (right column).}
\label{fig:Fig8}
\end{figure}

\section{Conclusion} \label{sec:conclude}
This investigation addresses the development as well as the asymptotic and spectral stability of a novel class of numerical method for the spatiotemporal discretization of FPDE. Section~\ref{sec:method} presented the method, including the time integration (section~\ref{subsec:Time}) and the spatial discretization (section~\ref{subsec:Space}). Using 1D linear FADR equation, the asymptotic stability and spectral analysis was outlined in section~\ref{sec:anal}. The method was validated using the test cases for the 2D fraction diffusion equation, in section~\ref{sec:MV}. Section~\ref{sec:NS} described the numerical results of the subdiffusive dynamics of the viscoelastic channel flow. We conclude our discussion with a remark that the focus of this present work was on the development of the numerical method and not on the physical description of the subdiffusive flow dynamics. Hence, a comprehensive study on the mechanics of subdiffusive channel flow, using the numerical method developed in this work, is reported elsewhere. \vskip 5pt

\noindent \textbf{Declaration of competing interest} The authors declare that they have no known competing financial interests or personal relationships that could have appeared to influence the work reported in this paper.

\bibliographystyle{elsarticle-harv} 
\bibliography{references.bib}

\begin{thebibliography}{36}
\expandafter\ifx\csname natexlab\endcsname\relax\def\natexlab#1{#1}\fi
\providecommand{\url}[1]{\texttt{#1}}
\providecommand{\href}[2]{#2}
\providecommand{\path}[1]{#1}
\providecommand{\DOIprefix}{doi:}
\providecommand{\ArXivprefix}{arXiv:}
\providecommand{\URLprefix}{URL: }
\providecommand{\Pubmedprefix}{pmid:}
\providecommand{\doi}[1]{\href{http://dx.doi.org/#1}{\path{#1}}}
\providecommand{\Pubmed}[1]{\href{pmid:#1}{\path{#1}}}
\providecommand{\bibinfo}[2]{#2}
\ifx\xfnm\relax \def\xfnm[#1]{\unskip,\space#1}\fi
\bibitem[{Al-Khaled and Momani(2005)}]{Khaled2005}
\bibinfo{author}{Al-Khaled, K.}, \bibinfo{author}{Momani, S.},
  \bibinfo{year}{2005}.
\newblock \bibinfo{title}{An approximate solution for a fractional
  diffusion-wave equation using the decomposition method}.
\newblock \bibinfo{journal}{Applied Mathematics and Computation}
  \bibinfo{volume}{165}, \bibinfo{pages}{473--483}.
\bibitem[{Ascher et~al.(1995)Ascher, Ruuth and Wetton}]{Ascher1995}
\bibinfo{author}{Ascher, U.M.}, \bibinfo{author}{Ruuth, S.J.},
  \bibinfo{author}{Wetton, B.T.R.}, \bibinfo{year}{1995}.
\newblock \bibinfo{title}{Implicit-explicit methods for time-dependent partial
  differential equations}.
\newblock \bibinfo{journal}{SIAM Journal on Numerical Analysis}
  \bibinfo{volume}{32}, \bibinfo{pages}{797--823}.
\bibitem[{Brunner et~al.(2010)Brunner, Ling and Yamamoto}]{Brunner2010}
\bibinfo{author}{Brunner, H.}, \bibinfo{author}{Ling, L.},
  \bibinfo{author}{Yamamoto, M.}, \bibinfo{year}{2010}.
\newblock \bibinfo{title}{Numerical simulations of {2D} fractional subdiffusion
  problems}.
\newblock \bibinfo{journal}{Journal of Computational Physics}
  \bibinfo{volume}{229}, \bibinfo{pages}{6613--6622}.
\bibitem[{Chauhan et~al.()Chauhan, Bansal and Sircar}]{Chauhan2021}
\bibinfo{author}{Chauhan, T.}, \bibinfo{author}{Bansal, D.},
  \bibinfo{author}{Sircar, S.}, .
\newblock \bibinfo{title}{Spatiotemporal linear stability of viscoelastic
  subdiffusive channel flows: a fractional calculus framework} \URLprefix
  \url{https://arxiv.org/abs/2301.02078}. \bibinfo{note}{arXive.org}.
\bibitem[{Coffey et~al.(2004)Coffey, Kalmykov and Waldron}]{Coffey2004}
\bibinfo{author}{Coffey, W.T.}, \bibinfo{author}{Kalmykov, Y.P.},
  \bibinfo{author}{Waldron, J.T.}, \bibinfo{year}{2004}.
\newblock \bibinfo{title}{The {Langevin} {Equation}: {With} {Applications} to
  {Stochastic} {Problems} in {Physics}, {Chemistry} and {Electrical}
  {Engineering}}. volume~\bibinfo{volume}{14} of \textit{\bibinfo{series}{World
  {Scientific} {Series} in {Contemporary} {Chemical} {Physics}}}.
\newblock \bibinfo{edition}{2nd} ed., \bibinfo{publisher}{World Scientific
  Publishing Company}.
\bibitem[{Crouzeix(1980)}]{Crouzeix1980}
\bibinfo{author}{Crouzeix, M.}, \bibinfo{year}{1980}.
\newblock \bibinfo{title}{Une m{\'e}thode multipas implicite-explicite pour
  l'approximation des {\'e}quations d'{\'e}volution paraboliques}.
\newblock \bibinfo{journal}{Numerische Mathematik} \bibinfo{volume}{35},
  \bibinfo{pages}{257--276}.
\bibitem[{Diethelm and Freed(2006)}]{Diethelm2006}
\bibinfo{author}{Diethelm, K.}, \bibinfo{author}{Freed, A.},
  \bibinfo{year}{2006}.
\newblock \bibinfo{title}{An efficient algorithm for the evaluation of
  convolution integrals}.
\newblock \bibinfo{journal}{Computers \& Mathematics with Applications}
  \bibinfo{volume}{51}, \bibinfo{pages}{51--72}.
\bibitem[{Doi(1996)}]{Doi1996}
\bibinfo{author}{Doi, M.}, \bibinfo{year}{1996}.
\newblock \bibinfo{title}{An {Introduction} to {Polymer} {Physics}}.
\newblock \bibinfo{publisher}{Clarendon Press}.
\bibitem[{Fogelson and Neeves(2015)}]{Fogelson2015}
\bibinfo{author}{Fogelson, A.L.}, \bibinfo{author}{Neeves, K.B.},
  \bibinfo{year}{2015}.
\newblock \bibinfo{title}{Fluid {Mechanics} of {Blood} {Clot} {Formation}}.
\newblock \bibinfo{journal}{Annual Review of Fluid Mechanics}
  \bibinfo{volume}{47}, \bibinfo{pages}{377--403}.
\bibitem[{Goychuk et~al.(2017)Goychuk, Kharchenko and Metzler}]{Goychuk2017}
\bibinfo{author}{Goychuk, I.}, \bibinfo{author}{Kharchenko, V.O.},
  \bibinfo{author}{Metzler, R.}, \bibinfo{year}{2017}.
\newblock \bibinfo{title}{Persistent {Sinai}-type diffusion in {Gaussian}
  random potentials with decaying spatial correlations}.
\newblock \bibinfo{journal}{Physical Review E} \bibinfo{volume}{96},
  \bibinfo{pages}{052134}.
\bibitem[{Goychuk and Pöschel(2020)}]{Goychuk2020}
\bibinfo{author}{Goychuk, I.}, \bibinfo{author}{Pöschel, T.},
  \bibinfo{year}{2020}.
\newblock \bibinfo{title}{Hydrodynamic memory can boost enormously driven
  nonlinear diffusion and transport}.
\newblock \bibinfo{journal}{Physical Review E} \bibinfo{volume}{102},
  \bibinfo{pages}{012139}.
\bibitem[{Goychuk and Pöschel(2021)}]{Goychuk2021}
\bibinfo{author}{Goychuk, I.}, \bibinfo{author}{Pöschel, T.},
  \bibinfo{year}{2021}.
\newblock \bibinfo{title}{Fingerprints of viscoelastic subdiffusion in random
  environments: {Revisiting} some experimental data and their interpretations}.
\newblock \bibinfo{journal}{Physical Review E} \bibinfo{volume}{104},
  \bibinfo{pages}{034125}.
\bibitem[{Jannelli(2022)}]{Jannelli2022}
\bibinfo{author}{Jannelli, A.}, \bibinfo{year}{2022}.
\newblock \bibinfo{title}{Adaptive numerical solutions of time-fractional
  advection–diffusion–reaction equations}.
\newblock \bibinfo{journal}{Communications in Nonlinear Science and Numerical
  Simulation} \bibinfo{volume}{105}, \bibinfo{pages}{106073}.
\bibitem[{Khalid et~al.(2021)Khalid, Chaudhary, Garg, Shankar and
  Subramanian}]{Khalid2021}
\bibinfo{author}{Khalid, M.}, \bibinfo{author}{Chaudhary, I.},
  \bibinfo{author}{Garg, P.}, \bibinfo{author}{Shankar, V.},
  \bibinfo{author}{Subramanian, G.}, \bibinfo{year}{2021}.
\newblock \bibinfo{title}{The centre-mode instability of viscoelastic plane
  {Poiseuille} flow}.
\newblock \bibinfo{journal}{Journal of Fluid Mechanics} \bibinfo{volume}{915},
  \bibinfo{pages}{A43}.
\bibitem[{Kirkwood(1954)}]{Kirkwood1954}
\bibinfo{author}{Kirkwood, J.G.}, \bibinfo{year}{1954}.
\newblock \bibinfo{title}{The general theory of irreversible processes in
  solutions of macromolecules}.
\newblock \bibinfo{journal}{Journal of Polymer Science} \bibinfo{volume}{12},
  \bibinfo{pages}{1--14}.
\bibitem[{Kou and Xie(2004)}]{Kou2004}
\bibinfo{author}{Kou, S.C.}, \bibinfo{author}{Xie, X.S.}, \bibinfo{year}{2004}.
\newblock \bibinfo{title}{Generalized {Langevin} {Equation} with {Fractional}
  {Gaussian} {Noise}: {Subdiffusion} within a {Single} {Protein} {Molecule}}.
\newblock \bibinfo{journal}{Physical Review Letters} \bibinfo{volume}{93},
  \bibinfo{pages}{180603}.
\bibitem[{Kremer and Grest(1990)}]{Kremer1990}
\bibinfo{author}{Kremer, K.}, \bibinfo{author}{Grest, G.S.},
  \bibinfo{year}{1990}.
\newblock \bibinfo{title}{Dynamics of entangled linear polymer melts: {A}
  molecular‐dynamics simulation}.
\newblock \bibinfo{journal}{The Journal of Chemical Physics}
  \bibinfo{volume}{92}, \bibinfo{pages}{5057--5086}.
\bibitem[{Lai et~al.(2009)Lai, Wang, Cone, Wirtz and Hanes}]{Lai2009}
\bibinfo{author}{Lai, S.K.}, \bibinfo{author}{Wang, Y.Y.},
  \bibinfo{author}{Cone, R.}, \bibinfo{author}{Wirtz, D.},
  \bibinfo{author}{Hanes, J.}, \bibinfo{year}{2009}.
\newblock \bibinfo{title}{Altering {Mucus} {Rheology} to “{Solidify}”
  {Human} {Mucus} at the {Nanoscale}}.
\newblock \bibinfo{journal}{PLoS ONE} \bibinfo{volume}{4},
  \bibinfo{pages}{e4294}.
\bibitem[{Levine and Lubensky(2001)}]{Levine2001}
\bibinfo{author}{Levine, A.J.}, \bibinfo{author}{Lubensky, T.C.},
  \bibinfo{year}{2001}.
\newblock \bibinfo{title}{Response function of a sphere in a viscoelastic
  two-fluid medium}.
\newblock \bibinfo{journal}{Physical Review E} \bibinfo{volume}{63},
  \bibinfo{pages}{041510}.
\bibitem[{Murio(2008)}]{Murio2008}
\bibinfo{author}{Murio, D.A.}, \bibinfo{year}{2008}.
\newblock \bibinfo{title}{Implicit finite difference approximation for time
  fractional diffusion equations}.
\newblock \bibinfo{journal}{Computers \& Mathematics with Applications}
  \bibinfo{volume}{56}, \bibinfo{pages}{1138--1145}.
\bibitem[{Nandagopalan et~al.(2018)Nandagopalan, John, Baek, Miglani and
  Ardhianto}]{Nandagopalan2018}
\bibinfo{author}{Nandagopalan, P.}, \bibinfo{author}{John, J.},
  \bibinfo{author}{Baek, S.W.}, \bibinfo{author}{Miglani, A.},
  \bibinfo{author}{Ardhianto, K.}, \bibinfo{year}{2018}.
\newblock \bibinfo{title}{Shear-flow rheology and viscoelastic instabilities of
  ethanol gel fuels}.
\newblock \bibinfo{journal}{Experimental Thermal and Fluid Science}
  \bibinfo{volume}{99}, \bibinfo{pages}{181--189}.
\bibitem[{Podlubny(1999)}]{Podlubny1999}
\bibinfo{author}{Podlubny, I.}, \bibinfo{year}{1999}.
\newblock \bibinfo{title}{Fractional differential equations: an introduction to
  fractional derivatives, fractional differential equations, to methods of
  their solution and some of their applications}.
\newblock Number \bibinfo{number}{v. 198} in \bibinfo{series}{Mathematics in
  science and engineering}, \bibinfo{publisher}{Academic Press},
  \bibinfo{address}{San Diego}.
\bibitem[{Riley et~al.(1988)Riley, Gad-el Hak and Metcalfe}]{Riley1988}
\bibinfo{author}{Riley, J.J.}, \bibinfo{author}{Gad-el Hak, M.},
  \bibinfo{author}{Metcalfe, R.W.}, \bibinfo{year}{1988}.
\newblock \bibinfo{title}{Complaint {Coatings}}.
\newblock \bibinfo{journal}{Annual Review of Fluid Mechanics}
  \bibinfo{volume}{20}, \bibinfo{pages}{393--420}.
\bibitem[{Rouse(1953)}]{Rouse1953}
\bibinfo{author}{Rouse, P.E.}, \bibinfo{year}{1953}.
\newblock \bibinfo{title}{A {Theory} of the {Linear} {Viscoelastic}
  {Properties} of {Dilute} {Solutions} of {Coiling} {Polymers}}.
\newblock \bibinfo{journal}{The Journal of Chemical Physics}
  \bibinfo{volume}{21}, \bibinfo{pages}{1272--1280}.
\bibitem[{Rubenstein and Colby(2003)}]{Rubenstein2003}
\bibinfo{author}{Rubenstein, M.}, \bibinfo{author}{Colby, R.H.},
  \bibinfo{year}{2003}.
\newblock \bibinfo{title}{Polymer {Physics}}.
\newblock \bibinfo{publisher}{Oxford Univ. Press}, \bibinfo{address}{New York}.
\bibitem[{Sengupta et~al.(2012)Sengupta, Bhumkar, Rajpoot, Suman and
  Saurabh}]{Sengupta2012}
\bibinfo{author}{Sengupta, T.K.}, \bibinfo{author}{Bhumkar, Y.G.},
  \bibinfo{author}{Rajpoot, M.K.}, \bibinfo{author}{Suman, V.},
  \bibinfo{author}{Saurabh, S.}, \bibinfo{year}{2012}.
\newblock \bibinfo{title}{Spurious waves in discrete computation of wave
  phenomena and flow problems}.
\newblock \bibinfo{journal}{Applied Mathematics and Computation}
  \bibinfo{volume}{218}, \bibinfo{pages}{9035--9065}.
\bibitem[{Sengupta et~al.(2006)Sengupta, Sircar and Dipankar}]{Sircar2006}
\bibinfo{author}{Sengupta, T.K.}, \bibinfo{author}{Sircar, S.K.},
  \bibinfo{author}{Dipankar, A.}, \bibinfo{year}{2006}.
\newblock \bibinfo{title}{High {Accuracy} {Schemes} for {DNS} and {Acoustics}}.
\newblock \bibinfo{journal}{Journal of Scientific Computing}
  \bibinfo{volume}{26}, \bibinfo{pages}{151--193}.
\bibitem[{Singh et~al.(2020)Singh, Bansal, Kaur and Sircar}]{Sircar2020}
\bibinfo{author}{Singh, S.}, \bibinfo{author}{Bansal, D.},
  \bibinfo{author}{Kaur, G.}, \bibinfo{author}{Sircar, S.},
  \bibinfo{year}{2020}.
\newblock \bibinfo{title}{Implicit-explicit-compact methods for advection
  diffusion reaction equations}.
\newblock \bibinfo{journal}{Computers \& Fluids} \bibinfo{volume}{212},
  \bibinfo{pages}{104709}.
\bibitem[{Sircar et~al.(2015)Sircar, Aisenbrey, Bryant and Bortz}]{Sircar2015}
\bibinfo{author}{Sircar, S.}, \bibinfo{author}{Aisenbrey, E.},
  \bibinfo{author}{Bryant, S.}, \bibinfo{author}{Bortz, D.},
  \bibinfo{year}{2015}.
\newblock \bibinfo{title}{Determining equilibrium osmolarity in poly(ethylene
  glycol)/chondrotin sulfate gels mimicking articular cartilage}.
\newblock \bibinfo{journal}{Journal of Theoretical Biology}
  \bibinfo{volume}{364}, \bibinfo{pages}{397--406}.
\bibitem[{Sircar and Bansal(2019)}]{Sircar2019}
\bibinfo{author}{Sircar, S.}, \bibinfo{author}{Bansal, D.},
  \bibinfo{year}{2019}.
\newblock \bibinfo{title}{Spatiotemporal linear stability of viscoelastic free
  shear flows: {Dilute} regime}.
\newblock \bibinfo{journal}{Physics of Fluids} \bibinfo{volume}{31},
  \bibinfo{pages}{084104}.
\bibitem[{Sircar and Roberts(2016)}]{Sircar2016}
\bibinfo{author}{Sircar, S.}, \bibinfo{author}{Roberts, A.J.},
  \bibinfo{year}{2016}.
\newblock \bibinfo{title}{Surface deformation and shear flow in ligand mediated
  cell adhesion}.
\newblock \bibinfo{journal}{Journal of Mathematical Biology}
  \bibinfo{volume}{73}, \bibinfo{pages}{1035--1052}.
\bibitem[{Sircar and Wang(2010)}]{Sircar2010}
\bibinfo{author}{Sircar, S.}, \bibinfo{author}{Wang, Q.}, \bibinfo{year}{2010}.
\newblock \bibinfo{title}{Transient rheological responses in sheared biaxial
  liquid crystals}.
\newblock \bibinfo{journal}{Rheologica Acta} \bibinfo{volume}{49},
  \bibinfo{pages}{699--717}.
\bibitem[{Visbal and Gaitonde(2002)}]{Visbal2002}
\bibinfo{author}{Visbal, M.R.}, \bibinfo{author}{Gaitonde, D.V.},
  \bibinfo{year}{2002}.
\newblock \bibinfo{title}{On the {Use} of {Higher}-{Order}
  {Finite}-{Difference} {Schemes} on {Curvilinear} and {Deforming} {Meshes}}.
\newblock \bibinfo{journal}{Journal of Computational Physics}
  \bibinfo{volume}{181}, \bibinfo{pages}{155--185}.
\bibitem[{Zarabadi(2019)}]{Zarabadi2019}
\bibinfo{author}{Zarabadi, M.}, \bibinfo{year}{2019}.
\newblock \bibinfo{title}{Development of a {Robust} {Microfluidic}
  {Electrochemical} {Cell} for {Biofilm} {Study} in {Controlled} {Hydrodynamic}
  {Conditions}}.
\newblock Ph.D. thesis. Univ. Laval,.
\bibitem[{Zarabadi et~al.(2018)Zarabadi, Charette and Greener}]{Zarabadi2018}
\bibinfo{author}{Zarabadi, M.P.}, \bibinfo{author}{Charette, S.J.},
  \bibinfo{author}{Greener, J.}, \bibinfo{year}{2018}.
\newblock \bibinfo{title}{Flow-{Based} {Deacidification} of \textit{{Geobacter}
  sulfurreducens} {Biofilms} {Depends} on {Nutrient} {Conditions}: a
  {Microfluidic} {Bioelectrochemical} {Study}}.
\newblock \bibinfo{journal}{ChemElectroChem} \bibinfo{volume}{5},
  \bibinfo{pages}{3645--3653}.
\bibitem[{Zimm(1956)}]{Zimm1956}
\bibinfo{author}{Zimm, B.H.}, \bibinfo{year}{1956}.
\newblock \bibinfo{title}{Dynamics of {Polymer} {Molecules} in {Dilute}
  {Solution}: {Viscoelasticity}, {Flow} {Birefringence} and {Dielectric}
  {Loss}}.
\newblock \bibinfo{journal}{The Journal of Chemical Physics}
  \bibinfo{volume}{24}, \bibinfo{pages}{269--278}.

\end{thebibliography}





\end{document}